\newcommand{\referenza}{}
\newtheorem{thm}{Theorem}[section]
\newtheorem*{thm*}{Theorem \referenza}
\newtheorem*{cor*}{Corollary \referenza}
\newtheorem{lem}[thm]{Lemma}
\newtheorem*{lem*}{Lemma \referenza}
\newtheorem{prop}[thm]{Proposition}
\newtheorem*{prop*}{Proposition \referenza}
\newtheorem*{conj*}{Conjecture \referenza}
\theoremstyle{definition}
\newtheorem*{rmk*}{Remark}
\theoremstyle{definition}
\newtheorem{defi}[thm]{Definition}
\def\frg{{\mathfrak{g}}}
\def\db{{\bar{\partial}}}
\DeclareMathOperator{\Span}{Span}
\DeclareMathOperator{\id}{id}
\DeclareMathOperator{\End}{End}
\DeclareMathOperator{\Ker}{Ker}
\DeclareMathOperator{\vol}{vol}
\DeclareMathOperator{\im}{Im}
\DeclareMathSymbol{\Finv} {\mathord}{AMSb}{"60}
\newcommand\restrict[1]{\raisebox{-.5ex}{$|$}_{#1}}
\newcommand{\C}{\mathbb{C}}
\newcommand{\Z}{\mathbb{Z}}
\newcommand{\del}{\partial}
\newcommand{\delbar}{\overline{\partial}}
\numberwithin{equation}{section}
\let\phi\varphi
\DeclareFontFamily{U}{MnSymbolC}{}
\DeclareSymbolFont{MnSyC}{U}{MnSymbolC}{m}{n}
\DeclareFontShape{U}{MnSymbolC}{m}{n}{
    <-6>  MnSymbolC5
   <6-7>  MnSymbolC6
   <7-8>  MnSymbolC7
   <8-9>  MnSymbolC8
   <9-10> MnSymbolC9
  <10-12> MnSymbolC10
  <12->   MnSymbolC12}{}
\DeclareMathSymbol{\intprod}{\mathbin}{MnSyC}{'270}
\author{Tommaso Sferruzza}
\address[Tommaso Sferruzza]{
Dipartimento di Scienze Matematiche, Fisiche e Informatiche\\
Unità di Mate\-matica e Informatica\\
Università degli studi di Parma}
\email{tommaso.sferruzza@unipr.it}
\author{Adriano Tomassini}
\address[Adriano Tomassini]{
Dipartimento di Scienze Matematiche, Fisiche e Informatiche\\
Unità di Mate\-matica e Informatica\\
Università degli studi di Parma}
\email{adriano.tomassini@unpir.it}
\title{Dolbeault and Bott-Chern formalities: deformations and $\del\delbar$-lemma}
\keywords{Dolbeault formality, geometrically Dolbeault formal, geometrically-Bott-Chern-formal,  Deformation, $\del\delbar$-lemma, $ABC$-Massey product}
\thanks{The first author has been supported by GNSAGA of INdAM. The second author has been supported by the project PRIN2017 “Real and Complex Manifolds: Topology, Geometry and holomorphic dynamics” (code 2017JZ2SW5), and by GNSAGA of INdAM.}
\subjclass[2010]{32Q99,32S45, 32G05}
\date{\today}
\begin{document}

\begin{abstract}
It is proved that the properties of being Dolbeault formal and geometrically-Bott-Chern-formal are not closed under holomorphic deformations of the complex structure. Further, we construct a compact complex manifold which satisfies the $\del\delbar$-lemma but admits a non vanishing Aeppli-Bott-Chern-Massey product.
\end{abstract}

\maketitle
\section{Introduction}

Let $M$ be a compact manifold. As a consequence of formality theory of Sullivan, in \cite{DGMS} it is proved that if $M$ admits a K\"ahler structure, namely an integrable almost complex structure $J$ and a Hermitian metric $g$ whose fundamental form $\omega$ is closed, then the de Rham complex of $M$ is formal as a differential graded algebra; in particular, all Massey products on $M$ vanish. More generally, the same holds if $M$ satisfies the $\del\delbar$-lemma, for example, when $M$ is a Moishezon or a Fujiki class $\mathcal{C}$ manifold. Furhermore, the celebrated theorem by Kodaira and Spencer states that on a compact complex manifold the K\"ahler condition is stable under small deformations of the complex structure. A stability result also holds for small deformations of compact complex manifolds satisfying the $\del\delbar$-lemma, as proved in \cite{Wu}, see also \cite{AT12}. Equivalently, on a compact complex manifold, the existence of a K\"ahler metric, respectively, the validity of the $\del\delbar$-lemma, is a open condition under small deformations of the complex structure.

Motivated by Sullivan \cite{Sull}, in \cite{Kot} Kotschick defined \emph{metrically formal metrics} on a  manifold as Riemannian metrics such that the space of harmonic forms has a structure of algebra induced by the wedge product of forms, i.e., the wedge product of harmonic forms is still harmonic. Accordingly, a closed manifold is called \emph{geometrically formal} if it admits a metrically formal metric.

In the complex setting, Neisendorfer and Taylor introduced in \cite{NT} a notion of formality. More specifically, a complex manifold is said to be \emph{Dolbeault formal} if its double complex of complex differential forms is formal as a bidifferential bigraded algebra. In the same work, it is proved that Dolbeault formality provides an obstruction for the $\del\delbar$-lemma, i.e., a compact complex manifold satisfying the $\del\delbar$-lemma is Dolbeault formal.

Inspired by Kotschick, in \cite{TT}, respectively \cite{AT15}, the authors introduced the notions of \emph{geometrically Dolbeault formal manifolds}, respectively \emph{geometrically-Bott-Chern-formal manifolds}, as compact complex manifolds admitting a Hermitian metric $g$ such that the space of Dolbeault, respectively Bott-Chern, harmonic forms is an algebra. Accordingly, \emph{Dolbeault-Massey products} and \emph{Aeppli-Bott-Chern-Massey products} are defined in \cite{TT} and \cite{AT15}, in which it is shown that they provide an obstruction to, respectively, geometrical Dolbeault formality and geometrical Bott-Chern formality.
In particular, in \cite{TT} it is proved that Dolbeault formality and geometrical Dolbeault formality are not stable under small deformations of the complex structure. The same result holds for Bott-Chern-geometrical formality, as proved in \cite{TTo}.

In the present paper, we are interested in studying the closedness properties under small deformations of the complex structure of Dolbeault formality, geometrical Dolbeault formality and geometrical Bott-Chern formality, and in the relation between $\del\delbar$-lemma and geometrical Bott-Chern formality. By definition, a property $P$ depending on the complex structure of a complex manifold $(M,J)$ is said to be \emph{closed under holomorphic deformations of the complex structure} if, for every holomorphic family $\{(M,J_t)\}_{t\in\Delta}$, with $\Delta=\{z\in\C:|z|<1\}\subset\C$ and $(M_0,J_0)=(M,J)$,
\[
\text{if}\,\, P\,\, \text{holds on}\,\, (M,J_t),\,\,\text{for every}\,\, t\in\Delta\setminus \{0\}\,\,\,\Rightarrow\,\,\, P \,\,\text{holds on}\,\, (M,J).
\]
The first result regards the non closedness property of Dolbeault and Bott-Chern formalities; we prove the following, see  Theorem \ref{thm:dolb_non_closedness} and Theorem \ref{thm:bott_non_closedness}.
\begin{thm}\label{thm:main_1}
(1) The property of being geometrically Dolbeault formal, Dolbeault formal, weakly Dolbeault formal, and the vanishing of Dolbeault-Massey triple products are not closed under holomorphic deformations.\\
(2) The property of being geometrically-Bott-Chern-formal and the vanishing of Aeppli-Bott-Chern-Massey triple products are not closed under holomorphic deformations.
\end{thm}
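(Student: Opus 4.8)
The plan is to prove both statements by producing, in each case, an explicit holomorphic family $\{(M,J_t)\}_{t\in\Delta}$ on a fixed compact quotient $M=\Gamma\backslash G$ of a nilpotent Lie group endowed with left-invariant complex structures $J_t$ depending holomorphically on $t$, chosen so that the central fibre $(M,J_0)$ carries a non-vanishing Dolbeault-Massey (resp.\ Aeppli-Bott-Chern-Massey) triple product, while every non-central fibre $(M,J_t)$, $t\ne0$, is geometrically Dolbeault formal (resp.\ geometrically-Bott-Chern-formal). Since a non-vanishing Dolbeault-Massey triple product obstructs weak Dolbeault formality, which is in turn implied by Dolbeault formality, which is implied by geometrical Dolbeault formality, one has the chain
\[
\text{geom.\ Dolbeault formal}\ \Rightarrow\ \text{Dolbeault formal}\ \Rightarrow\ \text{weakly Dolbeault formal}\ \Rightarrow\ \text{Dolbeault-Massey triples vanish},
\]
and analogously geometrically-Bott-Chern-formal implies vanishing of all $ABC$-Massey triples (see \cite{TT, AT15, NT}). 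Hence in part (1) a single family realising the strongest property for $t\ne0$ and the weakest failure at $t=0$ makes all four listed properties non-closed at once, and similarly for part (2). Concretely I would select $M$ among the low-dimensional nilmanifolds admitting invariant complex structures, write the complex structure equations $d\varphi^i_t$ with holomorphic dependence on $t$, and tune the family so that the dimensions of the relevant (invariant) Dolbeault or Bott-Chern cohomology groups drop at $t\ne0$ exactly enough that every candidate triple product is forced into its indeterminacy ideal, whereas at $t=0$ the extra classes support a genuinely non-trivial product.

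For part (1) the first step is to reduce all Dolbeault cohomology computations to the finite-dimensional subcomplex of $G$-invariant forms; this is legitimate on nilmanifolds with invariant complex structure under the usual Nomizu-type criteria, and I would check that it holds uniformly along the chosen family, either by the openness of the relevant condition in $t$ or directly on the Lie algebra. With this reduction in place, for $t\ne0$ I would verify geometrical Dolbeault formality outright by exhibiting an invariant Hermitian metric whose $\delbar$-harmonic forms are closed under the wedge product, or, failing that, check that for every triple $([\alpha],[\beta],[\gamma])$ with $[\alpha][\beta]=[\beta][\gamma]=0$ the Massey product lies in $[\alpha]\cdot H^{\bullet,\bullet}_{\delbar}+H^{\bullet,\bullet}_{\delbar}\cdot[\gamma]$. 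For $t=0$ I would exhibit explicit $\delbar$-closed invariant forms $\alpha,\beta,\gamma$ together with invariant primitives $x,y$ satisfying $\alpha\wedge\beta=\delbar x$, $\beta\wedge\gamma=\delbar y$, and then show that the Dolbeault class of $\pm\,\alpha\wedge y-x\wedge\gamma$ is non-zero modulo the indeterminacy, the non-vanishing being verified at the level of invariant forms using a complete description of $H^{\bullet,\bullet}_{\delbar}(M,J_0)$.

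For part (2) the scheme is parallel, with $\delbar$ replaced by the pair $(\del,\delbar)$ and Dolbeault cohomology replaced by Bott-Chern and Aeppli cohomology; the reduction to invariant forms now relies on the corresponding Angella-type statement for Bott-Chern cohomology of nilmanifolds, whose hypotheses I would again verify on $G$ and check to persist along the family. The $ABC$-Massey triple product $\langle[\alpha],[\beta],[\gamma]\rangle$ of Bott-Chern classes is built from $\del\delbar$-primitives of $\alpha\wedge\beta$ and $\beta\wedge\gamma$ and lives in Aeppli cohomology: at $t=0$ I would display invariant $\alpha,\beta,\gamma$ and invariant $\del\delbar$-primitives producing a non-zero Aeppli class modulo indeterminacy, and for $t\ne0$ I would show the corresponding Aeppli classes are killed, again by a metric rendering the Bott-Chern harmonic forms an algebra, or by a rank count.

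I expect the main obstacle to be twofold. First, one must choose the family delicately enough that the \emph{same} manifold fails formality only at the origin; locating such structure equations, and in particular controlling precisely how the invariant Dolbeault/Bott-Chern Betti numbers jump at $t=0$ so that the indeterminacy ideals swallow all triple products for $t\ne0$ but not at $t=0$, is the crux of the argument and typically requires sifting through the classification of low-dimensional nilmanifolds with invariant complex structure. Second, proving that the central-fibre triple product is non-zero is not automatic: one must compute the full indeterminacy ideal $[\alpha]\cdot H+H\cdot[\gamma]$ (resp.\ its Aeppli analogue) and show the constructed representative escapes it, which amounts to determining these cohomology algebras completely on the Lie algebra. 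The Bott-Chern case of part (2) is the more demanding, since the Aeppli-Bott-Chern-Massey product is defined through $\del\delbar$-primitives and carries a correspondingly larger indeterminacy, and since the invariance theorem for Bott-Chern cohomology needs more care than its Dolbeault counterpart.
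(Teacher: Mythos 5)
Your overall architecture coincides with the paper's: a single holomorphic family on a fixed quotient $\Gamma\backslash G$ with left-invariant complex structures, cohomology jumping at $t=0$ so that the central fibre carries a non-vanishing Dolbeault- (resp.\ Aeppli-Bott-Chern-) Massey triple product while all fibres with $t\neq 0$ are geometrically Dolbeault formal (resp.\ geometrically-Bott-Chern-formal), the chain of implications then killing all four properties in (1) at once. However, as written the proposal has a genuine gap: the family itself is never produced. You explicitly defer "locating such structure equations" and "tuning" the jumps to later work and call it the crux, but that construction \emph{is} the content of the theorem; without an explicit $J_t$, explicit classes $\alpha,\beta,\gamma$ with primitives at $t=0$, a computation of the indeterminacy ideal there, and a verification of formality at every $t\neq 0$, nothing has been proved. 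The paper does all of this concretely on the holomorphically parallelizable Nakamura manifold: for (1) it deforms along $\eta^{\bar 3}$, exhibits a non-vanishing $\langle[\eta^3],[\eta^3],[\eta^{\bar 3}]\rangle_{\db}$ on the central fibre (using symmetrization to invariant forms to rule out the ideal), and for $t\neq0$ invokes the Angella–Kasuya lattices for which the deformed structures satisfy the $\del\delbar$-lemma, plus their tables of harmonic representatives to get geometric Dolbeault formality; for (2) it uses a second presentation of the same solvmanifold, the Angella–Kasuya Bott-Chern tables for $t\neq 0$, and an explicit non-vanishing $ABC$-product built from $[\eta^{12}]_{BC}$, $[e^{\bar z_1-z_1}\eta^{3\bar1}]_{BC}$, $[\eta^{\overline{12}}]_{BC}$ at $t=0$.

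A second, more specific problem is your restriction to \emph{nilpotent} groups. The Nakamura manifold is a solvmanifold, not a nilmanifold, and this is not incidental: the paper's mechanism for guaranteeing Dolbeault formality on every non-central fibre is that $(M,J_t)$ satisfies the $\del\delbar$-lemma for $t\neq 0$ (for a suitable family of lattices, as in \cite{AK}), and no non-toral nilmanifold can satisfy the $\del\delbar$-lemma (such nilmanifolds are never formal). So on a nilmanifold you would have to verify geometric Dolbeault formality, or the vanishing of \emph{all} triple products together with their indeterminacies, directly for every $t\neq 0$, and you offer no candidate family or mechanism for doing so; no such nilmanifold example is known to exist, and the known nilmanifold deformation examples (e.g.\ the Iwasawa family in \cite{TT}) go in the opposite direction, formal at $t=0$ and non-formal nearby, which addresses openness rather than closedness. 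Likewise, your appeal to "Nomizu-type" invariance results would have to be replaced, in the paper's setting, by the lattice-dependent solvmanifold computations of \cite{AK,AKGlob}, where even $h^{0,1}_{\db}$ depends on the choice of lattice; this dependence is precisely what the paper exploits and what your plan does not engage with.
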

As already remarked, a compact complex manifold satisfying the $\del\delbar$-lemma is formal in the sense of Sullivan and Dolbeault formal; in particular, the Massey products, respectively Dolbeault-Massey products, vanish. In contrast, Aeppli-Bott-Chern-Massey products do not provide an obstruction for the validity of the $\del\delbar$-lemma. Indeed, we prove the following, see Theorem \ref{thm:ABC_deldelbar}.
\begin{thm}\label{thm:main_2}
There exists a compact complex manifold satisfying the $\del\delbar$-lemma and admitting a non-vanishing $ABC$-Massey triple product.
\end{thm}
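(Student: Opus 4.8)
The plan is to exhibit an explicit compact complex manifold $M$ --- for instance one of Fujiki class $\mathcal{C}$ (say a non-K\"ahler Moishezon threefold, or more generally a suitable bimeromorphic modification of a projective manifold), or alternatively an explicit compact solvmanifold carrying an invariant complex structure for which the $\del\delbar$-lemma can be verified --- and to produce on it three Bott-Chern cohomology classes whose triple product is forced to be non-zero. One first checks that $M$ satisfies the $\del\delbar$-lemma: if $M$ is chosen in Fujiki class $\mathcal{C}$ this is immediate from the bimeromorphic invariance of the $\del\delbar$-lemma recalled in the introduction, while in the solvmanifold case one instead computes $H^{\bullet,\bullet}_{BC}(M)$, $H^{\bullet,\bullet}_A(M)$, $H^{\bullet,\bullet}_{\delbar}(M)$ and $H^{\bullet}_{dR}(M)$ from a finite-dimensional subcomplex of invariant forms and verifies the numerical criterion $\sum_{p+q=k}\dim_{\C}H^{p,q}_{BC}(M)=b_k(M)$ for every $k$ --- equivalently, that the natural maps $H^{\bullet,\bullet}_{BC}(M)\to H^{\bullet}_{dR}(M)$ and $H^{\bullet,\bullet}_{BC}(M)\to H^{\bullet,\bullet}_{\delbar}(M)$ are isomorphisms --- which is equivalent to the $\del\delbar$-lemma.

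Next one sets up the triple product: choose Bott-Chern classes $\mathfrak{a}_i=[\alpha_i]_{BC}\in H^{p_i,q_i}_{BC}(M)$, $i=1,2,3$, with $d\alpha_i=0$ and with $\mathfrak{a}_1\smile\mathfrak{a}_2=0$ and $\mathfrak{a}_2\smile\mathfrak{a}_3=0$ in $H^{\bullet,\bullet}_{BC}(M)$. Since $M$ satisfies the $\del\delbar$-lemma, the $d$-closed forms $\alpha_1\wedge\alpha_2$ and $\alpha_2\wedge\alpha_3$ are then $\del\delbar$-exact, so one may fix forms $\xi_{12},\xi_{23}$ with $\del\delbar\xi_{12}=\alpha_1\wedge\alpha_2$ and $\del\delbar\xi_{23}=\alpha_2\wedge\alpha_3$. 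Using $d\alpha_i=0$ one checks that $\xi_{12}\wedge\alpha_3-(-1)^{|\alpha_1|}\alpha_1\wedge\xi_{23}$ is $\del\delbar$-closed, so one obtains a well-defined Aeppli class
\[
\langle\mathfrak{a}_1,\mathfrak{a}_2,\mathfrak{a}_3\rangle_{ABC}\ \ni\ \big[\,\xi_{12}\wedge\alpha_3-(-1)^{|\alpha_1|}\,\alpha_1\wedge\xi_{23}\,\big]_A\ \in\ H^{\,p_1+p_2+p_3-1,\ q_1+q_2+q_3-1}_A(M).
\]

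The crux is to show that this class survives the quotient by its indeterminacy, i.e.\ that it does not lie in $\mathfrak{a}_1\smile H^{\bullet,\bullet}_A(M)+H^{\bullet,\bullet}_A(M)\smile\mathfrak{a}_3$ (the ambiguity coming from the choices of $\xi_{12}$ and $\xi_{23}$). For this one computes the relevant Aeppli cohomology group and the two cup-product maps explicitly, and detects the class through the Aeppli-Bott-Chern duality pairing $H^{p,q}_A(M)\times H^{n-p,n-q}_{BC}(M)\to\C$, $([\psi]_A,[\theta]_{BC})\mapsto\int_M\psi\wedge\theta$ (with $n=\dim_{\C}M$): one exhibits a Bott-Chern class against which the candidate representative integrates non-trivially, while every element of the indeterminacy integrates to zero. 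This is the main obstacle --- not producing a non-exact representative, but arranging the geometry of $M$ and the choice of $\mathfrak{a}_1,\mathfrak{a}_2,\mathfrak{a}_3$ so that the triple product genuinely survives the indeterminacy, which requires control of the full Bott-Chern and Aeppli cohomology rings of $M$.

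Finally one should note that this is consistent with the facts recalled in the introduction: the $\del\delbar$-lemma implies both formality of the de Rham algebra of $M$ (so all ordinary triple Massey products vanish) and Dolbeault formality of $M$ (so all Dolbeault-Massey products vanish), but the Aeppli-Bott-Chern-Massey product is a strictly finer secondary operation attached to the double complex $(A^{\bullet,\bullet}(M),\del,\delbar)$, sensitive to the separate action of $\del$ and $\delbar$, and is not obstructed by the $\del\delbar$-lemma. In particular such an $M$ is a compact complex manifold that satisfies the $\del\delbar$-lemma and yet fails to be geometrically-Bott-Chern-formal, which is the example required by the theorem.
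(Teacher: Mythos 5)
Your proposal is a strategy outline rather than a proof: the entire content of the theorem is the \emph{existence} of a specific compact complex manifold with both properties, and you never produce one. You correctly set up the ABC-Massey product, its indeterminacy, and a duality-pairing criterion for non-triviality, but then you explicitly defer ``the main obstacle'' --- arranging the geometry and the classes so that the product survives the indeterminacy --- which is precisely the part that constitutes the proof. Moreover, the candidate families you suggest are not shown to contain an example, and the solvmanifold route is actually doubtful: the invariant-complex-structure solvmanifolds in this circle of ideas that satisfy the $\del\delbar$-lemma (e.g.\ the deformed Nakamura manifolds $(M,J_t)$, $t\neq 0$, used in Section \ref{sec-Bott_Chern}) are geometrically-Bott-Chern-formal, so by Proposition \ref{prop:gf_ABC} all their ABC-Massey products vanish; and for Fujiki class $\mathcal{C}$ you give no mechanism at all for producing a non-vanishing product. (Minor point: once $\mathfrak{a}_1\smile\mathfrak{a}_2=0$ in $H_{BC}$, the forms $\alpha_1\wedge\alpha_2$ are $\del\delbar$-exact by the very definition of Bott-Chern cohomology; the $\del\delbar$-lemma is not needed for that step.)

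For comparison, the paper's proof is a concrete construction in two stages. First, it takes the Iwasawa manifold $M$, the order-four biholomorphism $\sigma(z_1,z_2,z_3)=(iz_1,iz_2,-z_3)$ (checked to descend to $M$ and to have $16$ isolated fixed points), and forms the global-quotient orbifold $\hat M=M/\langle\sigma\rangle$; computing the $\sigma$-invariant forms gives all cohomologies explicitly, whence $\hat M$ satisfies the $\del\delbar$-lemma, while $\phi^{1\overline 1}\wedge\phi^{2\overline 2}=\del\delbar\phi^{3\overline 3}$ yields the product $\langle[\phi^{1\overline 1}],[\phi^{2\overline 2}],[\phi^{2\overline 2}]\rangle_{ABC}=[\phi^{23\overline{23}}]_A\neq 0$ with trivial indeterminacy ideal. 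Second, it constructs an explicit smooth resolution $\tilde M$ of $\hat M$ (via a blow-up along the fixed curves of $\sigma^2$ and a further blow-up), uses the injectivity of pull-backs in de Rham, Dolbeault and Bott-Chern cohomology for proper surjective morphisms of orbifolds to transport the non-vanishing ABC-Massey product to $\tilde M$ (killing the indeterminacy by multiplying against $[\pi^*\phi^{1\overline 1}]_{BC}$ and pairing with the top class), and invokes the stability of the $\del\delbar$-lemma under such resolutions with $\del\delbar$-centers to conclude that $\tilde M$ still satisfies the $\del\delbar$-lemma. Without an analogue of this explicit construction and these two transfer arguments, your proposal does not establish the theorem.
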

We prove statement (1) of Theorem \ref{thm:main_1} by constructing a holomorphic family of compact complex manifolds $\{M_t\}_{t\in\Delta}$ obtained as a deformation of the complex structure of the holomorphically parallelizable Nakamura manifold, such that each $M_{t}$ is geometrically Dolbeault formal and Dolbeault formal for $t\in\Delta\setminus\{0\}$, but $M_0$ has a non vanishing Dolbeault-Massey triple product. This will assure that on each $M_t$ every triple Dolbeault-Massey product is vanishing, for $t\in\Delta\setminus\{0\}$, but $M_0$ is neither Dolbeault formal, nor geometrically Dolbeault formal.

To prove statement (2) of Theorem \ref{thm:main_1}, we use a different presentation of the holomorphically parallelizable Nakamura manifold selecting a suitable family of lattices. Then, we consider a holomorphic deformation of the complex structure such that each $M_t$ is geometrically-Bott-Chern-formal, for $t\neq 0$, but $M_0$ has a non vanishing $ABC$-Massey product, hence on each $M_t$ every $ABC$-Massey product vanishes but $M_0$ is not geometrically-Bott-Chern-formal.

In order to prove Theorem \ref{thm:main_2}, we start by constructing a complex orbifold obtained as a quotient of the Iwasawa manifold and by showing that it satisfies the $\del\delbar$-lemma and it admits a non vanishing $ABC$-Massey product. Then, we explicitly construct a smooth resolution $\tilde{M}$ of such orbifold and we conclude the proof by showing that $\tilde{M}$ still admits a non vanishing $ABC$-Massey product and it still satisfies the $\del\delbar$-lemma.

The paper is organised as follows. In section \ref{sec-notations}, we set the notations and recall the main facts of Hodge theory and cohomologies of compact complex manifolds. In section \ref{sec:form}, we recollect the definitions of Dolbeault formality and weak Dolbeault formality from the point of view of bidifferential bigraded algebras and we recollect the definitions of geometrically Dolbeault formal manifolds (respectively, geometrically-Bott-Chern-formal manifolds) and Dolbeault-Massey products (respectively, Aeppli-Bott-Chern-Massey products) as introduced in \cite{TT}, respectively \cite{TTo}. Section \ref{sec:orb} briefly gathers the main facts about cohomologies of complex orbifolds proved classically in \cite{Bai56,Joy00,SA56} and more recently in \cite{A13orb,ASTT,Stel,Stel2}. Finally, sections \ref{sec-Dolbeault}, \ref{sec-Bott_Chern}, and \ref{sec:deldelbar_ABC} are devoted to the proofs of Theorem \ref{thm:dolb_non_closedness}, Theorem \ref{thm:bott_non_closedness}, and Theorem \ref{thm:ABC_deldelbar}.
\vspace{1.0cm}

{\it Acknowledgement.} The authors kindly thank Luis Ugarte for useful discussions and suggestions in constructing the families of Theorem \ref{thm:dolb_non_closedness}. They also would like to thank Daniele Angella, Andrea Cattaneo, and Nicoletta Tardini for valuable comments and remarks which helped in the presentation of the results. Special thanks are due to Jonas Stelzig for many useful discussions.

\section{Preliminaries}\label{sec-notations}
Let $(M,J,g,\omega)$ be a compact Hermitian manifold with $\dim_{\C}M=n$, i.e., a compact complex manifold $(M,J)$, where $J\in\End(TM)$ such that $J^2=-id_{TM}$ is the integrable almost-complex structure on $M$ and $g$ is a Hermitian metric on $(M,J)$, i.e., a Riemannian metric on $M$ such that $J$ is an isometry with respect to $g$. The associated fundamental form $\omega\in\bigwedge^2M$ of $g$ is defined by the expression $\omega(\cdot,\cdot)=g(J\cdot,\cdot)$.

Once we extend $J$ to the complexified cotangent bundle $(T_{\C}M)^*$ and its exterior powers $\bigwedge_{\C}^kM$, we can decompose such bundles in terms of the $\pm i$-eigenspaces of $J$, which we denote by $(T^{1,0}M)^{\ast}$ and $ (T^{0,1}M)^*$, as follows
\begin{equation*}
(T_{\C}M)^*=(T^{1,0}M)^{\ast}\oplus (T^{0,1}M)^*,
\qquad
\textstyle\bigwedge_{\C}^{k}M=\oplus_{p+q=k}\textstyle\bigwedge^{p,q}M,
\end{equation*}
where each bundle $\bigwedge^{p,q}M:=\bigwedge^{p}(T^{1,0}M)^*\otimes\bigwedge^q(T^{0,1}M)^*$ is the bundle of \emph{$(p,q)$-forms} on $M$. We will denote its $\mathcal{C}^{\infty}$ global sections $\Gamma(M,\bigwedge^{p,q}M)$ by $\mathcal{A}^{p,q}M$.

With respect to such decompositions, the exterior differential $d$ acting at the level of $(p,q)$-forms on $M$ 
splits as $d=\del+\delbar$, where
\begin{equation}
\del\restrict{\mathcal{A}^{p,q}M}:=\pi^{p+1,q}(d(\mathcal{A}^{p,q}M)),\qquad \delbar\restrict{\mathcal{A}^{p,q}M}:=\pi^{p,q+1}(d(\mathcal{A}^{p,q}M)),
\end{equation}
are the projections of $d(\mathcal{A}^{p,q}M)$ onto, respectively, $\mathcal{A}^{p+1,q}M$ and $\mathcal{A}^{p,q+1}M$.

We will consider on $(M,J,g,\omega)$ the $\C$-antilinear Hodge $\ast$-operator with respect to $g$, i.e., the operator
\begin{gather*}
\ast\colon \mathcal{A}^{p,q}(M)\rightarrow\mathcal{A}^{n-p,n-q}(M)\\
\beta\mapsto \ast\beta
\end{gather*}
defined by $\alpha\wedge \ast\beta:=g(\alpha,\beta)\vol_g$, for any $\alpha\in\mathcal{A}^{p,q}M$, where we use the symbol $g$ for the $\C$-antilinear extension of $g$ to any $\mathcal{A}^{p,q}(M)$, and $\vol_g$ is the volume form naturally associated to $\omega$.

With respect to the $L^2(M)$-product on $\mathcal{A}^{p,q}M$
\begin{equation*}
(\alpha,\beta):=\int_{M}\alpha\wedge\ast\beta,
\end{equation*}
for any $\alpha$, $\beta\in\mathcal{A}^{p,q}M$, we consider the adjoint operators $\del^*$ and $\delbar^*$ of, respectively, $\del$ and $\delbar$, which can be written as
\begin{equation*}
\del^*=-\ast\del\ast,\qquad
\delbar^*=-\ast\delbar\ast.
\end{equation*}
Then, the \emph{Dolbeault Laplacian}, the \emph{Bott-Chern Laplacian}, and the \emph{Aeppli Laplacian} are defined as
\begin{align*}
&\Delta_{\delbar}:=\delbar\delbar^*+\delbar^*\delbar,\\
&\Delta_{BC}:=\del\delbar\delbar^*\del^*+\delbar^*\del^*\del\delbar+\delbar^*\del\del^*\delbar+\del^*\delbar\delbar^*\del+\delbar^*\delbar+\del^*\del,\\
&\Delta_{A}:=\del\del^*+\delbar\delbar^*+\delbar^*\del^*\del\delbar+\del\delbar\delbar^*\del^*+\del\delbar^*\delbar\del^*+\delbar\del^*\del\delbar^*.
\end{align*}
We note that $\Delta_{\delbar}$ is a self-adjoint elliptic second-order differential operator, whereas $\Delta_{BC}$ and $\Delta_{A}$ are self-adjoint elliptic fourth-order operators.

Let us now recall the definitions for the spaces of \emph{Dolbeault}, \emph{Bott-Chern}, and \emph{Aeppli cohomologies}, namely
\begin{equation*}
H_{\delbar}^{\bullet,\bullet}(M)=\frac{\Ker\delbar}{\im \delbar},\qquad H_{BC}^{\bullet,\bullet}(M)=\frac{\Ker\del\cap\Ker\delbar}{\im\del\delbar},\qquad H_A^{\bullet,\bullet}(M)=\frac{\Ker\del\delbar}{\im\del +\im\delbar}
\end{equation*}
and let
\begin{equation*}
\mathcal{H}_{\delbar}^{\bullet,\bullet}(M,g)=\Ker\Delta_{\delbar},\qquad \mathcal{H}_{BC}^{\bullet,\bullet}(M,g)=\Ker\Delta_{BC},\qquad \mathcal{H}_{A}^{\bullet,\bullet}(M,g)=\Ker\Delta_A
\end{equation*}
be the spaces of Dolbeault-, Bott-Chern-, Aeppli-harmonics forms on $M$ with respect to $g$, also denoted, respectively, as $\Delta_{\delbar}$-, $\Delta_{BC}$-, and $\Delta_A$-\emph{harmonic forms}.

Note that, since $M$ is compact, $\alpha\in\mathcal{A}^{p,q}(M)$ is \begin{itemize}
\item $\Delta_{\delbar}$-harmonic if and only if
\begin{equation}\label{eq:harm_delbar_forms}
\Delta_{\delbar}\alpha=0\Leftrightarrow\begin{cases}
\delbar\alpha=0\\
\delbar\ast\alpha=0,
\end{cases}
\end{equation}
\item $\Delta_{BC}$-harmonic if and only if
\begin{equation}\label{eq:harm_BC_forms}
\Delta_{BC}\alpha=0\Leftrightarrow\begin{cases}
\del\alpha=0\\
\delbar\alpha=0\\
\del\delbar\ast\alpha=0,
\end{cases}
\end{equation}
\item $\Delta_A$-harmonic if and only if
\begin{equation}\label{eq:harm_A_forms}
\Delta_{A}\alpha=0\Leftrightarrow\begin{cases}
\del\ast\alpha=0\\
\delbar\ast\alpha=0\\
\del\delbar\alpha=0.
\end{cases}
\end{equation}
\end{itemize}
The Hodge $\ast$-operator induces isomorphisms both at the level of cohomology and harmonic representatives between Bott-Chern- and Aeppli-cohomology, i.e.,
\begin{equation*}
\ast \left(H_{BC}^{p,q}(M)\right)\simeq H_A^{n-p,n-q}(M),\quad \ast\left(\mathcal{H}_{BC}^{p,q}(M,g)\right)\simeq\mathcal{H}_A^{n-p,n-q}(M,g).
\end{equation*}
Analogously to the classical Hodge theory, the natural injections
\begin{equation*}
\mathcal{H}_{\square}^{p,q}(M,g)\hookrightarrow H_{\square}^{p,q}(M), \qquad\text{for}\quad \square\in\{\delbar,BC,A\},
\end{equation*}
are in fact $\C$-linear isomorphisms of complex vector spaces.

Finally, we recall that a compact complex manifold $(M,J)$ is said to satisfy the \emph{$\del\delbar$-lemma} if
\[
\Ker\del\cap\Ker\delbar\cap(\im\del+\im\delbar)=\im\del\delbar.
\] 
\section{Obstructions to complex formalities}\label{sec:form}
In the real setting, the notion of formality has been introduced by Sullivan as follows: a differentiable manifold $M$ is said to be \emph{formal in the sense of Sullivan} if its de Rham complex $(\bigwedge^{\bullet}M,d)$ is equivalent, as a differential graded algebra, to a differential graded algebra with zero differential $(\mathcal{B},d\equiv 0)$, see \cite{Su} for further details.

Starting from this idea, similar notions have been introduced in the complex setting by Neisendorfer and Taylor in \cite{NT}.
We recall the following definitions.

\begin{defi}

A \emph{differential bi-graded algebra} (shortly, DBA) is a bi-graded commutative algebra $\mathcal{A}=\oplus_{i,j} \mathcal{A}_{i,j}$, endowed with a differential $\delbar$ of type $(0,1)$, i.e., $\delbar(\mathcal{A}_{i.j})\subset(\mathcal{A}_{i,j+1})$, which is a derivation, i.e., $\delbar(\alpha\cdot\beta)=\delbar\alpha\cdot\beta +(-1)^{\deg \alpha}\alpha\cdot\delbar\beta$.

A \emph{bidifferential bigraded algebra} (shortly, BBA) is a DBA $(\mathcal{A}=\oplus_{i,j}\mathcal{A}_{i,j},\delbar)$ further endowed with a differential $\del$ of type $(1,0)$, i.e., $\del \mathcal{A}_{i,j}\subset \mathcal{A}_{i+1,j}$, which is a derivation and anti-commutes with $\delbar$, i.e., $\del\delbar=-\delbar\del.$

Morphisms between DBA's and BBA's, are bi-degree preserving morphisms of algebras which commute with the differential of DBA's (or differentials, for BBA's).
\end{defi}
In particular, the cohomology of a BBA $(\mathcal{A},\delbar_{\mathcal{A}},\del_{\mathcal{A}})$ with respect to $\delbar_{\mathcal{A}}$ can be defined by setting $H_{\delbar}(\mathcal{A}):=\frac{\Ker\delbar_{\mathcal{A}}}{\im\delbar_{\mathcal{A}}}$. We note that given a BBA $(\mathcal{A},\delbar_{\mathcal{A}},\del_{\mathcal{A}})$, also $(H_{\delbar}(\mathcal{A}),0,\del_{\mathcal{A}})$ is a BBA.

Now, given  any morphism of BBA's $f\colon (\mathcal{A},\delbar_{\mathcal{A}},\del_{\mathcal{A}})\rightarrow (\mathcal{B},\delbar_{\mathcal{B}},\del_{\mathcal{B}})$, it commutes with differentials and, therefore, it induces a well-defined morphism at the level of cohomology
\begin{gather*}
H_{\delbar}(f)\colon (H_{\delbar}(\mathcal{A}),0,\del_{\mathcal{A}})\rightarrow (H_{\delbar}(\mathcal{B}),0,\del_{\mathcal{B}})\\
[\alpha]_{\delbar_{\mathcal{A}}}\mapsto [f(\alpha)]_{\delbar_{\mathcal{B}}}.
\end{gather*}
We define in similar manner the cohomology of a DBA's $(\mathcal{A},\delbar_{\mathcal{A}})$ with respect to the differential $\delbar_{\mathcal{A}}$ as $H_{\delbar}(\mathcal{A}):=\frac{\Ker\delbar_{\mathcal{A}}}{\im\delbar_{\mathcal{A}}}$. Also in this case, $(H_{\delbar}(\mathcal{A}),0)$ is DBA and morphisms of DBA's induce morphisms in cohomology.
\begin{defi}
Two BBA's $(\mathcal{A},\delbar_{\mathcal{A}},\del_{\mathcal{A}})$ and $(\mathcal{B},\delbar_{\mathcal{B}},\del_{\mathcal{B}})$ are said to be equivalent if there exists a family of BBA's $\{(\mathcal{C}_i,\del_{\mathcal{C}_i},\delbar_{\mathcal{C}_i})\}_{i=0}^{2k+1}$  such that $(\mathcal{C}_0,\delbar_{\mathcal{C}_0},\del_{\mathcal{C}_0})=(\mathcal{A},\delbar_{\mathcal{A}},\del_{\mathcal{A}})$ and $(\mathcal{C}_{2k+1},\delbar_{\mathcal{C}_{2k+1}},\del_{\mathcal{C}_{2k+1}})=(\mathcal{B},\delbar_{\mathcal{B}},\del_{\mathcal{B}})$ and morphisms of BBA's $f_i$ and $g_i$
\begin{equation*}
\begin{tikzcd}
& (\mathcal{C}_{2i+1},\delbar_{\mathcal{C}_{2i+1}},\del_{\mathcal{C}_{2i+1}}) \arrow[swap]{dl}{f_i} \arrow{dr}{g_i} & \\
(\mathcal{C}_{2i},\delbar_{\mathcal{C}_{2i}},\del_{\mathcal{C}_{2i}}) & & \quad(\mathcal{C}_{2i+2},\delbar_{\mathcal{C}_{2i+2}},\del_{\mathcal{C}_{2i+2}})
\end{tikzcd}
\end{equation*}
such that $f_i$ and $g_i$ induce, respectively, isomorphisms $H_{\delbar}(f_i)$ and $H_{\delbar}(g_i)$ in cohomology.
\end{defi}
We can now give the definitions of formality as done in \cite{NT}.
\begin{defi}
A complex manifold $(M,J)$ is said to be \emph{Dolbeault formal} if the Dolbeault complex $(\bigwedge^{\bullet,\bullet}M,\delbar,\del)$ is equivalent, as a BBA, to a BBA $(B,0,\del)$ whose differential $\delbar$ is zero. 
\end{defi}
A weaker condition arises when we consider the complex of the $(p,q)$-forms endowed only with the usual $\delbar$ differential. \begin{defi}
A complex manifold $(M,J)$ is said to be \emph{weakly Dolbeault formal} if $(\bigwedge^{\bullet,\bullet}(M),\delbar)$ is equivalent, as a DBA, to a DBA $(B,0)$ whose differential $\delbar$ is zero.
\end{defi}
It is easy to see that Dolbeault formality implies weak Dolbeault formality, since the operator $\del$ does not come into play in the definition of weak Dolbeault formality. Moreover, any manifold satisfying the $\del\delbar$-lemma (in particular, compact K\"ahler manifolds) are Dolbeault formal, see \cite{NT}.

In the wake of the definition of being geometrically formal according to Kotschick for differentiable manifolds, see \cite{Kot}, a notion of formality has been defined as well in the complex setting, see \cite{TT}. More precisely, we have the following.
\begin{defi}
A compact complex manifold $(M,J)$ is said to be \emph{geometrically Dolbeault formal} if $(M,J)$ admits a Hermitian metric $g$ such that $\mathcal{H}_{\delbar}^{\bullet,\bullet}(M,g)$ have a structure of algebra with respect to the wedge product.
\end{defi}
The following relations hold, as proved in \cite[Proposition 2.1, Proposition 2.2]{TT}.
\begin{prop}\label{prop:geom_form}
Let $(M,J)$ be a compact complex manifold. Then, if $(M,J,g,\omega)$ is geometrically Dolbeault formal,
\begin{enumerate}
\item $(M,J)$ is also weakly Dolbeault formal;
\item if $\mathcal{H}_{\delbar}^{\bullet,\bullet}(M,g)$ is $\del$-invariant, then $(M,J)$ is Dolbeault formal.
\end{enumerate}
\end{prop}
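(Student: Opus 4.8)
The plan is to take $B=\mathcal{H}_{\delbar}^{\bullet,\bullet}(M,g)$ itself as the target algebra and to realize the equivalence through the inclusion $\iota$ of harmonic forms into all forms. The geometric hypothesis is used precisely to guarantee that $\mathcal{H}_{\delbar}^{\bullet,\bullet}(M,g)$ is a bigraded subalgebra of $(\bigwedge^{\bullet,\bullet}M,\wedge)$: by assumption it is closed under $\wedge$, it contains the unit $1\in\mathcal{H}_{\delbar}^{0,0}(M,g)$ (constant functions being $\Delta_{\delbar}$-harmonic), and associativity together with bigraded commutativity are inherited from the ambient algebra. Since every $\Delta_{\delbar}$-harmonic form is $\delbar$-closed, the restriction of $\delbar$ to $\mathcal{H}_{\delbar}^{\bullet,\bullet}(M,g)$ vanishes, so $(\mathcal{H}_{\delbar}^{\bullet,\bullet}(M,g),0)$ is a DBA and
\[
\iota\colon(\mathcal{H}_{\delbar}^{\bullet,\bullet}(M,g),0)\hookrightarrow(\textstyle\bigwedge^{\bullet,\bullet}M,\delbar)
\]
is a morphism of DBA's, being bidegree preserving, multiplicative, and compatible with the differentials (as $\delbar\circ\iota=0=\iota\circ 0$).

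To finish (1) I would check that $H_{\delbar}(\iota)$ is an isomorphism. This is exactly the Hodge-theoretic statement recalled in Section \ref{sec-notations}: the natural map $\mathcal{H}_{\delbar}^{p,q}(M,g)\hookrightarrow H_{\delbar}^{p,q}(M)$ is a $\C$-linear isomorphism for all $p,q$, while $H_{\delbar}(\mathcal{H}_{\delbar}^{\bullet,\bullet}(M,g),0)=\mathcal{H}_{\delbar}^{\bullet,\bullet}(M,g)$ because the differential is zero. Hence the zig-zag
\[
(\textstyle\bigwedge^{\bullet,\bullet}M,\delbar)\xleftarrow{\ \iota\ }(\mathcal{H}_{\delbar}^{\bullet,\bullet}(M,g),0)\xrightarrow{\ \id\ }(\mathcal{H}_{\delbar}^{\bullet,\bullet}(M,g),0)
\]
consists of DBA morphisms inducing isomorphisms on $\delbar$-cohomology, and $(M,J)$ is weakly Dolbeault formal.

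For (2), assume in addition that $\del\bigl(\mathcal{H}_{\delbar}^{\bullet,\bullet}(M,g)\bigr)\subseteq\mathcal{H}_{\delbar}^{\bullet,\bullet}(M,g)$. Then $\del$ restricts to $\mathcal{H}_{\delbar}^{\bullet,\bullet}(M,g)$ as an operator of bidegree $(1,0)$ which still satisfies $\del^2=0$ and the Leibniz rule (both inherited from $\bigwedge^{\bullet,\bullet}M$), and which anti-commutes with the zero differential trivially; thus $(\mathcal{H}_{\delbar}^{\bullet,\bullet}(M,g),0,\del)$ is a BBA with vanishing $\delbar$. Now $\iota$ is a morphism of BBA's — beyond the properties already checked it commutes with $\del$, since the $\del$ on the subalgebra is by construction the restriction of the ambient one — and it still induces an isomorphism on $\delbar$-cohomology, so the same zig-zag as before (now read in BBA's) shows that $(\bigwedge^{\bullet,\bullet}M,\delbar,\del)$ is equivalent, as a BBA, to $(\mathcal{H}_{\delbar}^{\bullet,\bullet}(M,g),0,\del)$; hence $(M,J)$ is Dolbeault formal.

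I do not expect a genuine obstacle: essentially all of the work is the bookkeeping that the two geometric hypotheses are exactly what make $\mathcal{H}_{\delbar}^{\bullet,\bullet}(M,g)$ a sub-DBA (respectively a sub-BBA) of the form complex, after which the classical Hodge isomorphism supplies the required quasi-isomorphism for free. The only point deserving a little care is the interface with the formal definition of equivalence, where one notes that the single quasi-isomorphism $\iota$ from the smaller algebra into the larger one, padded by an identity morphism, already realizes an admissible zig-zag.
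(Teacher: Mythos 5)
Your proof is correct and is essentially the argument behind the cited result of Tomassini--Torelli \cite{TT}, which the paper invokes without reproving: geometric Dolbeault formality makes $\mathcal{H}_{\delbar}^{\bullet,\bullet}(M,g)$ a subalgebra with zero induced $\delbar$ (resp.\ a sub-BBA under $\del$-invariance), and the inclusion into $(\bigwedge^{\bullet,\bullet}M,\delbar,\del)$ is a quasi-isomorphism by Hodge theory, giving the required equivalence. No gaps; the only delicate point, fitting the single quasi-isomorphism into the zig-zag definition of equivalence, is handled adequately by your padding remark.
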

As Massey products yield obstructions to formality in the sense of Sullivan, the idea translates well in the complex setting. In fact, let $(M,J)$ be a compact complex manifold, and let
\begin{equation*}
\mathfrak{a}=[\alpha]\in H_{\delbar}^{p,q}(M),\qquad \mathfrak{b}=[\beta]\in H_{\delbar}^{r,s}(M),\qquad \mathfrak{c}=[\gamma]\in H_{\delbar}^{u,v}(M),
\end{equation*}
such that $\mathfrak{a}\smile\mathfrak{b}=0$ in $H_{\delbar}^{p+r,q+s}(M)$, $\mathfrak{b}\smile\mathfrak{c}=0$ in $H_{\delbar}^{r+u,s+v}(M)$, i.e.,
\begin{equation*}
\alpha\wedge\beta=\delbar f_{\alpha\beta},\qquad \beta\wedge\gamma=\delbar f_{\beta\gamma},
\end{equation*}
for $f_{\alpha\beta}\in\mathcal{A}^{p+r,q+s-1}M$, $f_{\beta\gamma}\in\mathcal{A}^{r+u,s+v-1}M$. The \emph{Dolbeault-Massey triple product} between the classes $\mathfrak{a}$, $\mathfrak{b}$, $\mathfrak{c}$, is the following equivalence class of cohomology classes
\begin{align*}
\langle\mathfrak{a},\mathfrak{b},\mathfrak{c}\rangle_{\delbar}:=[f_{\alpha\beta}\wedge\gamma+(-1)^{p+q}\alpha\wedge f_{\beta\gamma}]\in 
\frac{H_{\delbar}^{p+r+u,q+s+v-1}(M)}{\mathfrak{c}\smile H_{\delbar}^{p+r,q+s-1}(M)+\mathfrak{a}\smile H_{\delbar}^{r+u,s+v-1}(M)}.
\end{align*}
Then, we have the following obstruction result, see \cite[Proposition 3.1, Proposition 3.2]{TT}.
\begin{prop}\label{thm:dolb-mass}
Let $(M,J)$ be a complex compact manifold. If $(M,J)$ is either  Dolbeault formal or weakly Dolbeault formal, then every
Dolbeault–Massey triple products on $(M,J)$ identically vanish.
\end{prop}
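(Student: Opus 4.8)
\emph{Strategy.} The plan is to follow the classical argument of Deligne--Griffiths--Morgan--Sullivan: Dolbeault--Massey triple products are invariants of the $\delbar$-quasi-isomorphism type of a bidifferential bigraded algebra, and they vanish identically on any DBA with zero differential. Observe first that the Dolbeault--Massey triple product is built using only the wedge product and the operator $\delbar$, so it is intrinsic to the underlying DBA $(\textstyle\bigwedge^{\bullet,\bullet}M,\delbar)$; in particular, since Dolbeault formality implies weak Dolbeault formality, it suffices to treat the weakly Dolbeault formal case, where one is handed a zig-zag of DBA morphisms inducing isomorphisms in $\delbar$-cohomology that connects $(\textstyle\bigwedge^{\bullet,\bullet}M,\delbar)$ to some $(B,0)$.

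\emph{Naturality.} Let $\phi\colon(\mathcal{A},\delbar_{\mathcal{A}})\to(\mathcal{B},\delbar_{\mathcal{B}})$ be a morphism of DBAs. Being a bidegree-preserving algebra map that commutes with $\delbar$, it sends defining data of a Dolbeault--Massey product to defining data: if $\alpha\in\mathcal{A}^{p,q}$, $\beta\in\mathcal{A}^{r,s}$, $\gamma\in\mathcal{A}^{u,v}$ are $\delbar$-closed with $\alpha\wedge\beta=\delbar f_{\alpha\beta}$ and $\beta\wedge\gamma=\delbar f_{\beta\gamma}$, then the same relations hold for their images with primitives $\phi f_{\alpha\beta}$, $\phi f_{\beta\gamma}$, and $\phi$ carries the representative $f_{\alpha\beta}\wedge\gamma+(-1)^{p+q}\alpha\wedge f_{\beta\gamma}$ to the representative computed from these pushed-forward data. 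Hence $H_{\delbar}(\phi)$ maps the coset $\langle\mathfrak{a},\mathfrak{b},\mathfrak{c}\rangle_{\delbar}$ into $\langle H_{\delbar}(\phi)\mathfrak{a},H_{\delbar}(\phi)\mathfrak{b},H_{\delbar}(\phi)\mathfrak{c}\rangle_{\delbar}$. If in addition $H_{\delbar}(\phi)$ is an isomorphism, then it is a bigraded ring isomorphism of the $\delbar$-cohomology algebras, so it carries the indeterminacy subgroup $\mathfrak{c}\smile H_{\delbar}^{p+r,q+s-1}(\mathcal{A})+\mathfrak{a}\smile H_{\delbar}^{r+u,s+v-1}(\mathcal{A})$ bijectively onto the corresponding subgroup for $\mathcal{B}$, and therefore descends to an isomorphism between the ambient quotient groups sending $\langle\mathfrak{a},\mathfrak{b},\mathfrak{c}\rangle_{\delbar}$ to $\langle H_{\delbar}(\phi)\mathfrak{a},H_{\delbar}(\phi)\mathfrak{b},H_{\delbar}(\phi)\mathfrak{c}\rangle_{\delbar}$. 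In particular one of the two triple products is the zero coset if and only if the other is.

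\emph{Zero differential and chaining.} On a DBA $(B,0)$ with $\delbar\equiv 0$ one has $H_{\delbar}(B)=B$ with the same product, so the hypotheses $\mathfrak{a}\smile\mathfrak{b}=0$ and $\mathfrak{b}\smile\mathfrak{c}=0$ say literally that $\alpha\wedge\beta=0$ and $\beta\wedge\gamma=0$ in $B$; choosing $f_{\alpha\beta}=0=f_{\beta\gamma}$, the representative of $\langle\mathfrak{a},\mathfrak{b},\mathfrak{c}\rangle_{\delbar}$ is $0$, hence every Dolbeault--Massey triple product on $(B,0)$ vanishes. Now, given any triple $\mathfrak{a},\mathfrak{b},\mathfrak{c}$ on $M$ with $\mathfrak{a}\smile\mathfrak{b}=0$, $\mathfrak{b}\smile\mathfrak{c}=0$, I transport it along the zig-zag $(\textstyle\bigwedge^{\bullet,\bullet}M,\delbar)=\mathcal{C}_0\xleftarrow{f_0}\mathcal{C}_1\xrightarrow{g_0}\mathcal{C}_2\xleftarrow{f_1}\cdots\to\mathcal{C}_{2k+1}=(B,0)$: pulling the classes back along the isomorphism $H_{\delbar}(f_0)^{-1}$ yields a triple on $\mathcal{C}_1$ whose pairwise products still vanish (each $H_{\delbar}(f_i)$, $H_{\delbar}(g_i)$ being a ring isomorphism), then pushing forward along $g_0$ gives one on $\mathcal{C}_2$, and so on. By the naturality step, at each arrow the triple product is the zero coset on one side precisely when it is on the other; since it is zero at the final $(B,0)$, it follows that $\langle\mathfrak{a},\mathfrak{b},\mathfrak{c}\rangle_{\delbar}=0$ on $M$. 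As the triple was arbitrary, all Dolbeault--Massey triple products on $(M,J)$ vanish. The Dolbeault formal case is the identical argument carried out for BBAs, or simply a consequence of the weak case.

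\emph{Main difficulty.} The only genuinely delicate point is the bookkeeping of the indeterminacy subgroups together with the handling of the backwards-pointing arrows of the zig-zag --- that is, turning the slogan ``Massey products are quasi-isomorphism invariants'' into a precise statement for a span of morphisms, not merely a single one. This is exactly where one uses that each $H_{\delbar}(f_i)$ and $H_{\delbar}(g_i)$ is an isomorphism of the \emph{entire} bigraded $\delbar$-cohomology ring --- so that both the classes and their indeterminacies can be moved in either direction --- and that the morphisms are bidegree preserving, which guarantees that all the cohomological quotients appearing along the chain match up correctly.
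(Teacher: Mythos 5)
Your argument is correct and is essentially the standard one: Dolbeault--Massey triple products depend only on the DBA $(\bigwedge^{\bullet,\bullet}M,\delbar)$, are transported (together with their indeterminacy subgroups) along the zig-zag of $\delbar$-quasi-isomorphisms, and vanish on a DBA with zero differential. This is the same route as the proof the paper relies on (it cites \cite[Propositions 3.1--3.2]{TT} rather than reproving the statement), so there is nothing to add beyond noting that your reduction of the Dolbeault formal case to the weakly Dolbeault formal one is also how the implication chain is organized there.
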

We then have the following chain of implications for compact complex manifolds
\begin{equation}\label{Dolb_imp1}
\text{$\del\delbar$-lemma}\Rightarrow\text{Dolbeault formality}\Rightarrow\text{weak Dolbeault formality}\Rightarrow\text{vanishing of $\delbar$-Massey products.}
\end{equation}
As for a Bott-Chern formality, it turns out that there is not a clear way of defining it in terms of the algebraic approach described in the first part of this paragraph, since Bott-Chern cohomology not defined as the cohomology of a complex. However, as introduced in \cite{AT15}, a notion of formality can still be defined.
\begin{defi}
A compact complex manifold $(M,J)$ is said to be \emph{geometrically-Bott-Chern-formal} if $(M,J)$ admits a Hermitian metric $g$ such that $\mathcal{H}_{BC}^{\bullet,\bullet}(M,g)$ has a structure of algebra with respect to the wedge product.  
\end{defi}
Also, it is possible to define an adapted version of Massey triple products, as follows. Let $(M,J)$ be a compact complex manifold and let
\begin{equation*}
\mathfrak{a}=[\alpha]\in H_{BC}^{p,q}(M),\qquad \mathfrak{b}=[\beta]\in H_{BC}^{r,s}(M), \qquad \mathfrak{c}=[\gamma]\in H_{BC}^{u,v}(M),
\end{equation*}
such that $\mathfrak{a}\smile\mathfrak{b}=0$ in $H_{BC}^{p+r,q+s}(M)$ and $\mathfrak{b}\smile\mathfrak{c}=0$ in $H_{BC}^{r+u,s+v}(M)$, i.e.,
\begin{equation*}
(-1)^{p+q}\alpha\wedge\beta=\del\delbar g_{\alpha\beta}, \qquad (-1)^{r+s}\beta\wedge\gamma=\del\delbar g_{\beta\gamma},
\end{equation*}
for $g_{\alpha\beta}\in\mathcal{A}^{p+r-1,q+s-1}M$, $g_{\beta\gamma}\in\mathcal{A}^{r+u-1,s+v-1}M$. The \emph{Aeppli-Bott-Chern Massey triple product} $\langle\mathfrak{a},\mathfrak{b},\mathfrak{c}\rangle_{ABC}$  of the cohomology classes $\mathfrak{a}$, $\mathfrak{b}$, $\mathfrak{c}$, is the following equivalence class of cohomology classes
\begin{align}\label{def:ABCM}
[(-1)^{p+q}\alpha\wedge g_{\beta\gamma}-(-1)^{r+s}g_{\alpha\beta}\wedge\gamma]\in
\frac{H_A^{p+r+u-1,q+s+v-1}(M)}{[\alpha]_{BC}\smile H_A^{r+u-1,s+v-1}(M)+[\gamma]_{BC}\smile H_A^{p+r-1,q+s-1}(M)}.
\end{align}
Aeppli-Bott-Chern Massey triple products yield an obstruction to being geometrically-Bott-Chern-formal, as follows, see \cite[Theorem 3.2]{TTo}.
\begin{prop}\label{prop:gf_ABC}
Let $(M,J)$ be a compact complex manifold. If $(M,J,g,\omega)$ is geometrically-Bott-Chern-formal, then
every Aeppli–Bott–Chern Massey triple products is trivial.
\end{prop}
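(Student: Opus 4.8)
The plan is to adapt to the Bott-Chern setting the argument behind Proposition~\ref{thm:dolb-mass} for Dolbeault-Massey products, the decisive extra ingredient being that on a geometrically-Bott-Chern-formal manifold the wedge product of two $\Delta_{BC}$-harmonic forms is again $\Delta_{BC}$-harmonic. So fix a Hermitian metric $g$ realizing the geometric Bott-Chern formality of $(M,J)$, and let $\mathfrak{a}\in H_{BC}^{p,q}(M)$, $\mathfrak{b}\in H_{BC}^{r,s}(M)$, $\mathfrak{c}\in H_{BC}^{u,v}(M)$ be Bott-Chern classes with $\mathfrak{a}\smile\mathfrak{b}=0$ and $\mathfrak{b}\smile\mathfrak{c}=0$, so that $\langle\mathfrak{a},\mathfrak{b},\mathfrak{c}\rangle_{ABC}$ is defined. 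Since this triple product is a well-defined element of the quotient in \eqref{def:ABCM}, independent of the chosen representatives and of the primitives $g_{\alpha\beta},g_{\beta\gamma}$, it will be enough to exhibit a single admissible choice for which the representing form is identically zero.

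The key step is the following. Choose $\alpha,\beta,\gamma$ to be the unique $\Delta_{BC}$-harmonic representatives of $\mathfrak{a},\mathfrak{b},\mathfrak{c}$ with respect to $g$. By \eqref{eq:harm_BC_forms} each of them is $\del$- and $\delbar$-closed, hence $\alpha\wedge\beta$ is $\del$- and $\delbar$-closed, and by geometric Bott-Chern formality $\alpha\wedge\beta\in\mathcal{H}_{BC}^{p+r,q+s}(M,g)$. On the other hand the hypothesis $\mathfrak{a}\smile\mathfrak{b}=0$ in $H_{BC}^{p+r,q+s}(M)$ means precisely that $\alpha\wedge\beta\in\im\del\delbar$, i.e., that its Bott-Chern class vanishes. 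Since the natural inclusion $\mathcal{H}_{BC}^{\bullet,\bullet}(M,g)\hookrightarrow H_{BC}^{\bullet,\bullet}(M)$ is an isomorphism, a $\Delta_{BC}$-harmonic form whose Bott-Chern class is zero must itself be zero; hence $\alpha\wedge\beta\equiv 0$ on $M$, and by the same argument $\beta\wedge\gamma\equiv 0$.

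It then remains only to observe that, with these representatives, the constant primitives $g_{\alpha\beta}=0$ and $g_{\beta\gamma}=0$ are admissible (both defining relations $(-1)^{p+q}\alpha\wedge\beta=\del\delbar g_{\alpha\beta}$ and $(-1)^{r+s}\beta\wedge\gamma=\del\delbar g_{\beta\gamma}$ reduce to $0=0$), and that for this choice the representative $(-1)^{p+q}\alpha\wedge g_{\beta\gamma}-(-1)^{r+s}g_{\alpha\beta}\wedge\gamma$ in \eqref{def:ABCM} equals $0$; therefore $\langle\mathfrak{a},\mathfrak{b},\mathfrak{c}\rangle_{ABC}$ is the trivial class. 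I expect the conceptual heart of the proof, and the place where one has to argue carefully, to be the passage from the \emph{cohomological} vanishing $\mathfrak{a}\smile\mathfrak{b}=0$ to the \emph{pointwise} vanishing $\alpha\wedge\beta\equiv 0$: this is exactly where the geometric Bott-Chern formality hypothesis is used, through the injectivity of $\mathcal{H}_{BC}\hookrightarrow H_{BC}$. The only other point that needs care is the appeal to the independence of $\langle\mathfrak{a},\mathfrak{b},\mathfrak{c}\rangle_{ABC}$ from all the auxiliary choices, which is what licenses the reduction to harmonic representatives.
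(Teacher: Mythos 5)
Your argument is correct and is essentially the proof of the cited result \cite[Theorem 3.2]{TTo}, which the paper invokes without reproducing: one takes $\Delta_{BC}$-harmonic representatives, uses the algebra structure on $\mathcal{H}_{BC}^{\bullet,\bullet}(M,g)$ together with the injectivity of $\mathcal{H}_{BC}^{\bullet,\bullet}(M,g)\hookrightarrow H_{BC}^{\bullet,\bullet}(M)$ to get $\alpha\wedge\beta\equiv 0$ and $\beta\wedge\gamma\equiv 0$, and then chooses the zero primitives so that the representative in \eqref{def:ABCM} vanishes. The only point you leave implicit, as does the standard treatment, is the independence of the class from the chosen Bott--Chern representatives and primitives, which is part of the definition's well-posedness established in \cite{AT15} and which you correctly flag as the licence for reducing to harmonic representatives.
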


\section{Cohomologies of complex orbifolds}\label{sec:orb}
In this section, we briefly recall the main facts about complex orbifolds and their cohomologies. In particular, we will focus on complex orbifolds of global-quotient-type, a class of simple yet useful orbifolds, of which we will make use of in section \ref{sec:deldelbar_ABC}.

We start by recalling the definition of a complex orbifold, as originally introduced by \cite{SA56,Joy00}.
\begin{defi}
A complex space of dimension $n$ is said to be a \emph{complex orbifold of complex dimension} $n$ if it is singular and its singularities are locally isomorphic to quotient singularities $\C^n/ G$, where $G$ is a finite subgroup of $GL(n;\C)$.
\end{defi}

Tensors on a complex orbifold $\hat{M}$, such as vector fields, differential forms, or metrics, are defined to be, locally at a point $p\in\hat{M}$, as tensors which are $G$-invariant on $\C^n$, with $G\in GL(n;\C)$ such that, locally at $p$, it holds that $\hat{M}\simeq \C^n/G.$

Let us then consider the graded complex of complex forms on the complex orbifold $\hat{M}$, namely,  $(\bigwedge_{\C}^{\bullet}\hat{M},d)$, and its associated bigraded complex $(\bigwedge^{\bullet,\bullet}\hat{M},\delbar,\del)$. As recalled in section \ref{sec-notations} for the usual cohomologies of manifolds, we can define \emph{de Rham}, \emph{Dolbeault}, \emph{Bott-Chern}, and \emph{Aeppli orbifold cohomologies} as 
\begin{align}\label{eq:coom_orb}
&\textstyle H_{dR}^{p,q}(\hat{M})=\frac{\Ker d}{\im d}\cap \bigwedge^{p,q}(\hat{M}),&\quad &H_{\delbar}^{p,q}(\hat{M})=\frac{\Ker\delbar}{\im\delbar}\cap\textstyle\bigwedge^{p,q}(\hat{M}),\\
&\textstyle H_{BC}^{p,q}(\hat{M})=\frac{\Ker\del\cap\Ker\delbar}{\im\del\delbar}\cap\bigwedge^{p,q}(\hat{M}),&\quad &H_A^{p,q}(\hat{M})=\frac{\Ker\del\delbar}{\im\del+\im\delbar}\cap\textstyle\bigwedge^{p,q}(\hat{M}).\label{eq:coom_orb1}
\end{align}
Starting from the complexes $(\bigwedge^{\bullet}\hat{M},d)$ and $(\bigwedge^{\bullet,\bullet}\hat{M},\del,\delbar)$, a spectral sequence $\{(E_r^{\bullet},d_r)\}$ can be defined, so that $E_1^{\bullet}\simeq H_{\delbar}^{\bullet,\bullet}(\hat{M})$. From such sequence, known as \emph{Hodge and Fr\"olicher spectral sequence} of $\hat{M}$, one can derive the Fr\"olicher inequality 
\begin{equation}\label{eq:fr-orb}
\sum_{p+q=k} \dim_{\C}H_{\delbar}^{p,q}(\hat{M})\geq \dim_{\C}H_{dR}^k(\hat{M};\C).
\end{equation}
A complex orbifold is said to \emph{satisfy the $\del\delbar$-lemma} if the natural map $H_{BC}^{p,q}(\hat{M})\rightarrow H_{\delbar}^{p,q}(\hat{M})$ is injective. Among many other characterizations, such property is equivalent, for a complex orbifold, to equality holding in equation (\ref{eq:fr-orb}) and to have isomorphisms induced by conjugation in Dolbeault cohomology, i.e.,
\begin{equation}
\overline{H_{\delbar}^{p,q}(\hat{M})}\simeq H_{\delbar}^{q,p}(\hat{M}),
\end{equation}
see \cite{DGMS}.

Once we fix an Hermitian metric $g$ on a compact complex orbifold $\hat{M}$ of complex dimension $n$, one can define the $\C$-antilinear Hodge $\ast$-operator
\[\textstyle
\ast\colon\bigwedge^{p,q}\hat{M}\rightarrow\bigwedge^{n-p,n-q}\hat{M},
\]
the operators
\[
d^*=-\ast d \ast, \quad \del^*=-\ast\del\ast, \quad \delbar^*=-\ast\delbar\ast,
\]
the de Rham Laplacians $\Delta$, Dolbeault Laplacian $\Delta_{\delbar}$, Bott-Chern Laplacian $\hat{\Delta}_{BC}$, and Aeppli Laplacian 
$\hat{\Delta}_A$ and their kernels
\begin{gather*}
\mathcal{H}_{\sharp}^k=\{\alpha\in\textstyle\bigwedge^k\hat{M}:\Delta\alpha=0\},\\
\mathcal{H}^{p,q}_{\sharp}=\{\alpha\in\textstyle\bigwedge^{p,q}\hat{M}: \Delta_{\sharp}\alpha=0\}, \quad\text{for}\quad \sharp\in\{\delbar,BC,A\}.
\end{gather*}
Harmonic forms on $\hat{M}$ with respect to each Laplacian can be characterized as section \ref{sec-notations} in equations (\ref{eq:harm_delbar_forms}), (\ref{eq:harm_BC_forms}), and (\ref{eq:harm_A_forms}).

For a compact complex orbifold, the following theorem holds, see \cite[Theorem 1]{SA56},\cite[Theorem K]{Bai56}.
\begin{thm}\label{thm:coom_orb1}
Let $\hat{M}$ be a compact complex orbifold of complex dimension $n$ and $g$ an Hermitian metric on $\hat{M}$. The following isomorphisms hold
\begin{align*}
H_{dR}^k(\hat{M};\C)&\rightarrow\mathcal{H}_{dR}^k(\hat{M},g)\\
H_{\delbar}^{p,q}(\hat{M})&\rightarrow\mathcal{H}_{\delbar}^{p,q}(\hat{M},g).
\end{align*}
Moreover, the Hodge $\ast$-operator yields, respectively, the isomorphisms
\begin{align*}
H_{dR}^k(\hat{M},\C)\simeq H_{dR}^{2n-k}(\hat{M},\C)\\
H_{\delbar}^{p,q}(\hat{M})\simeq H_{\delbar}^{n-p,n-q}(\hat{M}).
\end{align*}
\end{thm}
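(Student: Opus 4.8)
The plan is to recognise the statement as the orbifold incarnation of the Hodge--de Rham and Dolbeault decomposition theorems and to reduce it, essentially verbatim, to the elliptic theory for $V$-manifolds of Satake \cite{SA56} and Baily \cite{Bai56}. The starting point is the observation that on a compact complex orbifold $\hat{M}$ everything in sight is assembled from locally $G$-invariant data: near a point $p$ with $\hat{M}\simeq\C^{n}/G$, $G\subset\GL(n;\C)$ finite, the bundles $\bigwedge^{p,q}$, the Hermitian metric $g$, and hence the $\C$-antilinear Hodge operator $\ast$ and the formal adjoints $d^{*}=-\ast d\ast$, $\del^{*}=-\ast\del\ast$, $\delbar^{*}=-\ast\delbar\ast$, are $G$-invariant objects on the uniformizing chart $\tilde{U}$. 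Consequently the Laplacians $\Delta$ and $\Delta_{\delbar}$ (and $\hat{\Delta}_{BC}$, $\hat{\Delta}_{A}$ as well) lift to $G$-invariant, formally self-adjoint, elliptic operators on the genuine manifold $\tilde{U}$, and since $G$ is finite the $G$-averaging operator is a bounded projector commuting with all of them, so every local elliptic estimate on $\tilde{U}$ descends to the subspace of $G$-invariant sections.

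First I would globalise this: cover $\hat{M}$ by finitely many uniformizing charts, take a subordinate partition of unity by orbifold functions, and patch the local parametrices of $\Delta$, respectively $\Delta_{\delbar}$, into a global pseudo-differential parametrix modulo smoothing operators; equivalently, one obtains that $\Delta$ and $\Delta_{\delbar}$ are formally self-adjoint, elliptic, Fredholm operators on the Sobolev completions of the spaces of orbifold forms. This is exactly the content of \cite[Theorem~1]{SA56} and \cite[Theorem~K]{Bai56}, and it is the only genuinely analytic ingredient; I expect this step --- making the harmonic analysis localize across the singular strata --- to be the main obstacle, everything afterwards being formal. (For a global-quotient orbifold $\hat{M}=\tilde{M}/G$ with $\tilde{M}$ a compact complex manifold one could instead simply take $G$-invariant parts in the classical Hodge theory of $\tilde{M}$.)

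Granting the elliptic package, the $L^{2}$-product $(\alpha,\beta)=\int_{\hat{M}}\alpha\wedge\ast\beta$ --- well defined on $\hat{M}$, the integral over a singular stratum being weighted by $1/|G|$ in each chart --- gives the orthogonal decompositions
\[
\textstyle
\bigwedge^{k}\hat{M}=\mathcal{H}_{dR}^{k}(\hat{M},g)\oplus d\bigl(\bigwedge^{k-1}\hat{M}\bigr)\oplus d^{*}\bigl(\bigwedge^{k+1}\hat{M}\bigr),\qquad
\bigwedge^{p,q}\hat{M}=\mathcal{H}_{\delbar}^{p,q}(\hat{M},g)\oplus\delbar\bigl(\bigwedge^{p,q-1}\hat{M}\bigr)\oplus\delbar^{*}\bigl(\bigwedge^{p,q+1}\hat{M}\bigr),
\]
with $\mathcal{H}_{dR}^{k}$ and $\mathcal{H}_{\delbar}^{p,q}$ finite dimensional. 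Then the natural maps $\mathcal{H}_{dR}^{k}(\hat{M},g)\to H_{dR}^{k}(\hat{M};\C)$ and $\mathcal{H}_{\delbar}^{p,q}(\hat{M},g)\to H_{\delbar}^{p,q}(\hat{M})$, sending a harmonic form to its class, are isomorphisms by the usual argument: on a compact space $\Delta\alpha=0$ forces $d\alpha=d^{*}\alpha=0$ and $\Delta_{\delbar}\alpha=0$ forces $\delbar\alpha=\delbar^{*}\alpha=0$ (as in \eqref{eq:harm_delbar_forms}); a harmonic exact form $h=d\beta$ has $\|h\|^{2}=(\beta,d^{*}h)=0$; and any closed $\gamma=h+d\eta+d^{*}\xi$ satisfies $0=d\gamma=dd^{*}\xi$, hence $\|d^{*}\xi\|^{2}=(dd^{*}\xi,\xi)=0$ and $[\gamma]=[h]$ --- and likewise with $\delbar$ in place of $d$.

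Finally, to get the Hodge-$\ast$ duality I would check, by a routine computation from $d^{*}=-\ast d\ast$, $\delbar^{*}=-\ast\delbar\ast$ and $\del^{*}=-\ast\del\ast$, that the $\C$-antilinear isomorphism $\ast\colon\bigwedge^{p,q}\hat{M}\to\bigwedge^{n-p,n-q}\hat{M}$ commutes with $\Delta_{\delbar}$ and with $\Delta$; therefore it restricts to isomorphisms $\mathcal{H}_{\delbar}^{p,q}(\hat{M},g)\xrightarrow{\ \sim\ }\mathcal{H}_{\delbar}^{n-p,n-q}(\hat{M},g)$ and $\mathcal{H}_{dR}^{k}(\hat{M},g)\xrightarrow{\ \sim\ }\mathcal{H}_{dR}^{2n-k}(\hat{M},g)$, and composing with the isomorphisms of the previous paragraph yields $H_{\delbar}^{p,q}(\hat{M})\simeq H_{\delbar}^{n-p,n-q}(\hat{M})$ and $H_{dR}^{k}(\hat{M};\C)\simeq H_{dR}^{2n-k}(\hat{M};\C)$, as claimed.
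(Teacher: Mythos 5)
Your argument is correct, and it matches the paper's treatment: the paper does not prove this statement but simply recalls it from Satake \cite[Theorem 1]{SA56} and Baily \cite[Theorem K]{Bai56}, which is exactly the elliptic package you invoke as the analytic core, the rest of your sketch (averaging over local isotropy groups, Hodge decomposition, harmonic representatives, and the $\C$-antilinear $\ast$ preserving harmonicity) being the standard route those references follow.
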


Let us now consider the following class of complex orbifolds.
\begin{defi}
A complex orbifold $\hat{M}$ is said to be of \emph{global-quotient-type} if $\hat{M}=M/ G$, where $M$ is a complex manifold and $G$ is finite subgroup of the group of biholomorphisms of $M$.
\end{defi}
For compact orbifolds of global-quotient-type, besides Theorem \ref{thm:coom_orb1}, also Bott-Chern and Aeppli cohomologies can be computed in terms of harmonic representatives, as in the following theorem.
\begin{thm}\label{thm:coom_orb2}
Let $\hat{M}$ be a compact complex orbifold of global-quotient type and $g$ a Hermitian metric on $\hat{M}$. Then, the following isomorphisms hold
\begin{align*}
H_{BC}^{p,q}(\hat{M})&\rightarrow \mathcal{H}_{BC}^{p,q}(\hat{M},g)\\
H_A^{p,q}(\hat{M})&\rightarrow \mathcal{H}_A^{p,q}(\hat{M},g).
\end{align*}
In particular, the Hodge $\ast$-operator induces the isomorphisms
\begin{equation}
H_{BC}^{p,q}(\hat{M})\simeq H_A^{n-p,n-q}(\hat{M}).
\end{equation}
\end{thm}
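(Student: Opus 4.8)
The plan is to reduce the whole statement to the classical Bott--Chern and Aeppli Hodge theory on the compact complex manifold covering $\hat M$, exploiting that the acting group is finite. Write $\hat M=M/G$ with $M$ a compact complex manifold and $G$ a finite group of biholomorphisms of $M$, and let $\pi\colon M\to\hat M$ be the projection. First I would set up the dictionary between objects on $\hat M$ and $G$-invariant objects on $M$: pulling back along $\pi$ identifies $\bigwedge^{p,q}\hat M$ with the space $(\mathcal{A}^{p,q}M)^{G}$ of $G$-invariant $(p,q)$-forms on $M$, compatibly with $\del$, $\delbar$ and with the wedge product; moreover the given Hermitian metric $g$ on $\hat M$ corresponds to a $G$-invariant Hermitian metric $\tilde g:=\pi^{*}g$ on $M$, so that the $\C$-antilinear Hodge $\ast$, the adjoints $\del^{*},\delbar^{*}$, and the Laplacians $\Delta_{BC},\Delta_{A}$ of $\hat M$ are precisely the restrictions to $G$-invariant forms of the corresponding operators of $(M,\tilde g)$. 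In particular $\mathcal{H}_{BC}^{p,q}(\hat M,g)=\mathcal{H}_{BC}^{p,q}(M,\tilde g)^{G}$ and $\mathcal{H}_{A}^{p,q}(\hat M,g)=\mathcal{H}_{A}^{p,q}(M,\tilde g)^{G}$.

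Next I would record two elementary consequences of the finiteness of $G$. Since the averaging operator $\frac{1}{|G|}\sum_{\gamma\in G}\gamma^{*}$ is a projector onto $G$-invariants, the functor $(-)^{G}$ on $\C$-vector spaces with $G$-action is exact, hence commutes with passing to subquotients; applied to the $G$-subrepresentations $\im\del\delbar\subset\Ker\del\cap\Ker\delbar\subset\mathcal{A}^{p,q}M$ (and to the analogous Aeppli pair) this gives
\[
H_{BC}^{p,q}(M)^{G}=\frac{(\Ker\del\cap\Ker\delbar)^{G}}{(\im\del\delbar)^{G}},\qquad
H_{A}^{p,q}(M)^{G}=\frac{(\Ker\del\delbar)^{G}}{(\im\del+\im\delbar)^{G}}.
\]
Averaging a potential shows $(\im\del\delbar)^{G}=\del\delbar\big((\mathcal{A}^{p-1,q-1}M)^{G}\big)$ and $(\im\del+\im\delbar)^{G}=\del\big((\mathcal{A}^{p-1,q}M)^{G}\big)+\delbar\big((\mathcal{A}^{p,q-1}M)^{G}\big)$, because $\del$ and $\delbar$ commute with each $\gamma^{*}$; comparing with the definitions \eqref{eq:coom_orb}--\eqref{eq:coom_orb1} under the dictionary above yields $H_{BC}^{p,q}(\hat M)=H_{BC}^{p,q}(M)^{G}$ and $H_{A}^{p,q}(\hat M)=H_{A}^{p,q}(M)^{G}$.

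Now I would invoke the Hodge theory available on the compact complex manifold $M$, recalled for manifolds in Section \ref{sec-notations}: the natural maps $\mathcal{H}_{BC}^{p,q}(M,\tilde g)\to H_{BC}^{p,q}(M)$ and $\mathcal{H}_{A}^{p,q}(M,\tilde g)\to H_{A}^{p,q}(M)$ are isomorphisms. Since $\tilde g$ is $G$-invariant these are isomorphisms of $G$-representations, so taking $G$-invariants (exact) and using the identifications of the previous two paragraphs produces the asserted isomorphisms $H_{BC}^{p,q}(\hat M)\xrightarrow{\sim}\mathcal{H}_{BC}^{p,q}(\hat M,g)$ and $H_{A}^{p,q}(\hat M)\xrightarrow{\sim}\mathcal{H}_{A}^{p,q}(\hat M,g)$. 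Finally, on $M$ the $G$-equivariant operator $\ast$ restricts to an isomorphism $\mathcal{H}_{BC}^{p,q}(M,\tilde g)\xrightarrow{\sim}\mathcal{H}_{A}^{n-p,n-q}(M,\tilde g)$; passing to $G$-invariants gives $\mathcal{H}_{BC}^{p,q}(\hat M,g)\simeq\mathcal{H}_{A}^{n-p,n-q}(\hat M,g)$, and chaining with the two isomorphisms just obtained yields $H_{BC}^{p,q}(\hat M)\simeq H_{A}^{n-p,n-q}(\hat M)$.

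The only genuinely delicate point is the bookkeeping of the first paragraph: one has to check carefully that $\pi^{*}$ really identifies orbifold $(p,q)$-forms, the orbifold metric, and the orbifold fourth-order operators $\Delta_{BC},\Delta_{A}$ with the $G$-invariant parts of their counterparts on $M$, so that orbifold Bott--Chern and Aeppli cohomology (and harmonic forms) are literally the $G$-invariant parts of the corresponding spaces on $M$. Once this dictionary is in place, the statement becomes a formal consequence of the classical Hodge theory on the compact complex manifold $M$ together with the exactness of the $(-)^{G}$ functor for the finite group $G$, and no analysis beyond what is already available on $M$ is required.
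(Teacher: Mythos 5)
Your proposal is correct: the paper itself states Theorem \ref{thm:coom_orb2} without proof, recalling it from the cited literature (notably \cite{A13orb}), and the argument there is exactly the one you give — identify orbifold forms, metric and Laplacians with the $G$-invariant parts of their counterparts on the covering manifold $M$, use the averaging projector to see that Bott--Chern and Aeppli cohomologies of $\hat M$ are the $G$-invariant parts of those of $M$, and then take $G$-invariants in the classical equivariant Hodge theory for $\Delta_{BC}$ and $\Delta_A$ on $(M,\pi^{*}g)$. No gaps: the dictionary in your first paragraph is precisely the definition of orbifold tensors for a global quotient, and the duality via $\ast$ follows as you say since $\ast$ is $G$-equivariant for the invariant metric.
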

We conclude this section by recalling the property of the pull-back map of a proper surjective morphism of compact complex orbifolds, see \cite{A13orb}.
\begin{thm}\label{thm:pullback_orb}
Let $\hat{M}$ and $\hat{N}$ be compact complex orbifolds of the same complex dimension, and let $\pi\colon \hat{M}\rightarrow\hat{N}$ be a proper surjective morphism of complex orbifolds. Then the map $\pi\colon \hat{M}\rightarrow \hat{N}$ induces injective morphisms
\begin{align*}
\pi_{dR}^*\colon H_{dR}^k(\hat{N})&\rightarrow H_{dR}^k(\hat{M})\\
\pi_{\delbar}^*\colon H_{\delbar}^{p,q}(\hat{N})&\rightarrow H_{\delbar}^{p,q}(\hat{M})\\
\pi_{BC}^{\ast}\colon H_{BC}^{p,q}(\hat{N})&\rightarrow H_{BC}^{p,q}(\hat{M}).
\end{align*} 
\end{thm}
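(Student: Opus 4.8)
The final statement to prove is Theorem~\ref{thm:pullback_orb}, on the injectivity of pullback maps for a proper surjective morphism $\pi\colon\hat M\to\hat N$ of compact complex orbifolds of equal dimension. The plan is to adapt the classical manifold argument (trace/transfer map) to the orbifold setting, using the harmonic theory recalled in Theorem~\ref{thm:coom_orb1} and Theorem~\ref{thm:coom_orb2}. The key observation is that since $\pi$ is proper, surjective, and dimension-preserving, it is generically finite of some degree $k$, so there is a well-defined ``integration along the fiber'' or ``transfer'' operator $\pi_*$ on forms (defined away from the branch locus and extended, or defined via the orbifold atlases) satisfying the projection-type identity $\pi_*\circ\pi^* = k\cdot\mathrm{id}$ at the level of forms, or at least at the level of each of the relevant cohomologies. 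Once that identity is available on cohomology, injectivity of $\pi^*$ is immediate: if $\pi^*_{\square}[\alpha]=0$ then $k[\alpha]=\pi_*\pi^*[\alpha]=0$, and since we are working over $\C$ (or $\Q$) with $k\neq 0$ this forces $[\alpha]=0$. This works uniformly for $\square\in\{dR,\delbar,BC\}$ because $\pi^*$ is compatible with $d$, $\del$, and $\delbar$.

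First I would make precise the transfer map. The cleanest route is to pass to harmonic representatives: fix a Hermitian orbifold metric $g_{\hat N}$ on $\hat N$ and pull it back (plus a correction, or simply choose any metric) to get $g_{\hat M}$ on $\hat M$. Given a cohomology class on $\hat N$, represent it by the appropriate harmonic form $\beta$ (de Rham, Dolbeault, or Bott--Chern harmonic, using Theorems~\ref{thm:coom_orb1} and~\ref{thm:coom_orb2}, the latter precisely to handle $H_{BC}$ in the global-quotient case — though here $\hat N$ is a general orbifold, so for $H_{BC}$ I would instead argue directly on forms). Then $\pi^*\beta$ is a smooth orbifold form on $\hat M$ which is $d$- (resp. $\del$-, $\delbar$-, $\del\delbar$-) closed. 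The transfer $\pi_*$ is defined on a dense open orbifold subset $U\subset\hat N$ over which $\pi$ is an unramified $k$-sheeted covering of orbifolds by averaging over the sheets; one checks $\pi_*\pi^*\beta = k\beta$ on $U$, and since both sides are smooth forms on all of $\hat N$ agreeing on a dense set, the identity holds globally. The same computation shows $\pi_*$ commutes with $d$, $\del$, $\delbar$ (it is a local averaging of a pullback by local biholomorphisms), hence descends to each cohomology, giving $\pi_*\circ\pi^*=k\cdot\mathrm{id}$ on $H^{\bullet}_{dR}$, $H^{\bullet,\bullet}_{\delbar}$, and $H^{\bullet,\bullet}_{BC}$. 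Injectivity of each $\pi^*$ follows.

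The main obstacle I anticipate is making the transfer map genuinely well-defined and smooth across the branch/ramification locus of $\pi$ in the orbifold category — in particular, verifying that the fiberwise-average of $\pi^*\beta$ extends smoothly (as an orbifold form) over the locus where the sheet number drops or where orbifold isotropy jumps, and checking that the degree $k$ is the same constant everywhere in the orbifold sense (so that no rational correction factors appear). This is a local statement: over a small orbifold chart $\C^n/G$ of $\hat N$, one analyzes the preimage, which is a disjoint union of orbifold charts $\C^n/H_i$ of $\hat M$, and one must see that $\sum_i[\,G:H_i\,]$ (or the appropriate weighted count) is independent of the point. I would handle this by reducing to the generic point and invoking properness plus the equidimensionality hypothesis to rule out positive-dimensional fibers, then a standard Stein-factorization / normalization argument to control the finite part; the orbifold bookkeeping is routine but must be stated carefully. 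An alternative, if the transfer-on-forms approach proves delicate near singularities, is to lift the whole situation: replace $\hat M$ and $\hat N$ by finite orbifold (or manifold) covers where $\pi$ becomes an honest finite morphism of manifolds, apply the classical result there, and descend the injectivity by another averaging argument over the deck groups — but the direct transfer argument above is preferable and is what I would write up.
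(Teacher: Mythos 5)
There is a genuine gap in the central step of your argument. Injectivity via a transfer map requires $\pi_*$ to be defined on \emph{all} smooth forms (or at least on the potential $\gamma$ with $\pi^*\beta=d\gamma$, resp.\ $\delbar\gamma$, $\del\delbar\gamma$) and to land again in smooth forms, so that it descends to the cohomologies in question. But the fiberwise average of a general smooth form does \emph{not} extend smoothly across the branch locus: already for $z\mapsto z^2$ on $\C$, the pushforward of $f(z)\,d\bar z$ is $\bigl(f(\sqrt w)-f(-\sqrt w)\bigr)\,d\bar w/(2\sqrt{\bar w})$, which is merely bounded, not smooth, at $w=0$. So the assertion ``$\pi_*$ commutes with $d,\del,\delbar$, hence descends to each cohomology'' is unjustified as written, and the anticipated fix (Stein factorization, constancy of the sheet count) does not touch this analytic issue. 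The standard repairs are either (a) to define $\pi_*$ on currents, where it is automatic and commutes with all differentials, and then use that currents compute de Rham, Dolbeault and (via local $\del\delbar$-regularity) Bott--Chern cohomology; or (b) to avoid a transfer on general forms altogether. Your fallback of lifting to finite manifold covers is also not available in general, since orbifolds need not be global quotients.

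For comparison: the paper does not prove this statement itself but quotes it from \cite{A13orb}, where the argument is of Wells type and sidesteps your obstacle. One takes the harmonic representative $\alpha$ of the given class on $\hat N$ (Hodge theory for orbifolds, as in Theorems \ref{thm:coom_orb1}--\ref{thm:coom_orb2}), assumes $\pi^*\alpha$ is exact in the relevant sense, and pairs with $\pi^*(\ast\alpha)$: Stokes' theorem kills the pairing because harmonicity gives $d\ast\alpha=0$, resp.\ $\delbar\ast\alpha=0$, resp.\ $\del\delbar\ast\alpha=0$, while the only pushforward identity ever used is $\int_{\hat M}\pi^*\eta=\deg(\pi)\int_{\hat N}\eta$ for top-degree forms, which is harmless since the branch locus has measure zero. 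This yields $\deg(\pi)\,\lVert\alpha\rVert^2=0$, hence $\alpha=0$. If you recast your write-up along these lines (or carry out the transfer honestly at the level of currents), the proof closes; as it stands, the key identity $\pi_*\circ\pi^*=k\cdot\mathrm{id}$ ``on cohomology'' has not been established.
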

\section{Dolbeault formalities are not closed}\label{sec-Dolbeault}
In this section we state and prove the non closedness result for the Dolbeault formalities as defined in section \ref{sec:form}.

We recall that, by definition of a property closed under holomorphic deformations, for our purposes it will suffice to show the existence of a holomorphic family of compact complex manifolds $\{M_t\}_{t\in\Delta}$, $\Delta=\{z\in\C:|z|<1\}\subset\C$, such that each $M_{t}$ is geometrically Dolbeault formal and Dolbeault formal for $t\in\Delta\setminus\{0\}$, but $M_0$ has a non vanishing Dolbeault-Massey triple product. Given Proposition \ref{prop:geom_form} and the chain of implications (\ref{Dolb_imp1}), this will assure that each $M_t$ is also weakly-Dolbeault formal and every triple Dolbeault-Massey product is vanishing, for $t\in\Delta\setminus\{0\}$, but $M_0$ is neither weakly-Dolbeault formal, Dolbeault formal, nor geometrically Dolbeault formal, yielding the following result.

\begin{thm}\label{thm:dolb_non_closedness}
The property of being geometrically Dolbeault formal, Dolbeault formal, weakly Dolbeault formal, and the vanishing of Dolbeault-Massey triple products are not closed under holomorphic deformations.
\end{thm}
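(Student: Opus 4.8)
The plan is to exhibit an explicit holomorphic family over $\Delta$ whose general fibre is geometrically Dolbeault formal (hence Dolbeault formal and Massey-trivial by Proposition \ref{prop:geom_form} and the chain \eqref{Dolb_imp1}) while the central fibre carries a nonzero Dolbeault--Massey triple product, contradicting Proposition \ref{thm:dolb-mass}. The natural candidate, flagged already in the acknowledgements, is the holomorphically parallelizable Nakamura manifold $X = \Gamma\backslash G$, where $G$ is a $3$-dimensional complex solvable Lie group of the form $\C\ltimes_\phi\C^2$ and $\Gamma$ a lattice; one then deforms the complex structure within the Kuranishi family. I would first write down an explicit coframe of left-invariant $(1,0)$-forms $\varphi^1,\varphi^2,\varphi^3$ on $X$ with structure equations of the shape $d\varphi^1=0$, $d\varphi^2=\varphi^1\wedge\varphi^2$ (up to constants), $d\varphi^3=-\varphi^1\wedge\varphi^3$, so that the Dolbeault cohomology is computed by the Angella--Kasuya type results for invariant forms on solvmanifolds, and then introduce a one-parameter deformation $\bar\partial_t$ (equivalently a Beltrami differential $t\,\varphi^i\otimes\bar\partial_{\bar\varphi^j}$) such that for $t\ne 0$ the new complex structure is, say, a small deformation falling into a class known to be geometrically Dolbeault formal — for instance one for which the invariant harmonic forms (with respect to a suitably chosen diagonal Hermitian metric $g_t$) close under wedge product.

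The core computations then split into two independent halves. \emph{General fibre.} For $t\in\Delta\setminus\{0\}$ I would fix the invariant Hermitian metric $g_t$ making $\{\varphi^i_t\}$ unitary, use the characterization \eqref{eq:harm_delbar_forms} of $\Delta_{\bar\partial_t}$-harmonicity together with the symmetrization/Nomizu-type argument (valid for completely solvable or suitable solvmanifolds, and in the holomorphically parallelizable case after passing to invariant forms) to identify $\mathcal H^{\bullet,\bullet}_{\bar\partial_t}(M_t,g_t)$ with an explicit finite-dimensional space of invariant forms, and then check by hand that the wedge of any two such forms is again harmonic — i.e. verify $\bar\partial_t(\alpha\wedge\beta)=0$ and $\bar\partial_t^*(\alpha\wedge\beta)=0$ on the finite list of bidegree pairs. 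This gives geometric Dolbeault formality of $M_t$, and if additionally this space is $\partial_t$-invariant, full Dolbeault formality via Proposition \ref{prop:geom_form}(2); in any case the vanishing of all Dolbeault--Massey products for $t\ne 0$ follows. \emph{Central fibre.} For $t=0$ I would pick explicit $\bar\partial$-closed invariant representatives $\alpha\in\mathcal A^{p,q}$, $\beta\in\mathcal A^{r,s}$, $\gamma\in\mathcal A^{u,v}$ — concretely built from $\varphi^2,\varphi^3,\bar\varphi^2,\bar\varphi^3$ — with $[\alpha][\beta]=0=[\beta][\gamma]$ in $H_{\bar\partial}^{\bullet,\bullet}(X)$, solve $\alpha\wedge\beta=\bar\partial f_{\alpha\beta}$ and $\beta\wedge\gamma=\bar\partial f_{\beta\gamma}$ explicitly, form the representative $f_{\alpha\beta}\wedge\gamma+(-1)^{p+q}\alpha\wedge f_{\beta\gamma}$, and show its class is nonzero in the quotient $H_{\bar\partial}^{\bullet,\bullet}(X)\big/\big(\mathfrak c\smile H_{\bar\partial}+\mathfrak a\smile H_{\bar\partial}\big)$. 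The nonvanishing is proved by computing $H_{\bar\partial}^{\bullet,\bullet}(X)$ and the relevant cup-product subspaces explicitly (the dimensions are known for Nakamura manifolds), or by exhibiting a closed current pairing nontrivially with the representative while annihilating the indeterminacy subspace.

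The main obstacle, as usual for this circle of results, is twofold and lives entirely in the central fibre computation. First, one must choose the deformation direction so cleverly that the general fibre genuinely becomes geometrically Dolbeault formal: not every small deformation of the Nakamura manifold has an invariant metric with algebra-closed $\bar\partial$-harmonics, so the Beltrami parameter has to be tuned (likely so that $\bar\partial_t$ "kills" precisely the cohomology classes responsible for the Massey product at $t=0$, lowering $\dim H_{\bar\partial_t}$ by Frölicher semicontinuity), and one must verify the wedge-closure identities case by case. Second, proving that the triple product on $X$ is genuinely nonzero — rather than killed by the indeterminacy — requires a careful bookkeeping of $H_{\bar\partial}^{p+r+u,q+s+v-1}(X)$ and of the two cup-product subspaces; this is where a wrong sign or an overlooked $\bar\partial$-exact correction term would collapse the whole argument. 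I would therefore organize the proof so that these explicit structure equations, the table of invariant Dolbeault cohomology of $X$ and of $M_t$, and the chosen primitives $f_{\alpha\beta},f_{\beta\gamma}$ are all displayed, and then invoke Proposition \ref{prop:geom_form}, the chain \eqref{Dolb_imp1}, and Proposition \ref{thm:dolb-mass} to assemble the non-closedness conclusion.
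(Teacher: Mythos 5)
Your overall strategy coincides with the paper's (deform the holomorphically parallelizable Nakamura manifold so that the fibres with $t\neq 0$ are geometrically Dolbeault formal while the central fibre carries a non-vanishing Dolbeault--Massey product), but as written it is a plan whose two decisive computations are left as declared intentions, and the concrete choices you do indicate would not go through. Your central fibre is the Nakamura manifold itself, with a Massey product to be built from invariant forms in $\varphi^2,\varphi^3,\bar\varphi^2,\bar\varphi^3$. On the undeformed manifold (structure equations $d\eta^1=2i\,\eta^{13}$, $d\eta^2=-2i\,\eta^{23}$, $d\eta^3=0$) every invariant $(1,0)$-form is $\delbar$-closed, so by symmetrization $\eta^{3}\wedge\eta^{\bar 3}$ is not $\delbar$-exact and the natural triple product $\langle[\eta^3],[\eta^3],[\eta^{\bar3}]\rangle$ is not even defined; worse, for the lattices used in \cite{TT} the Nakamura manifold is geometrically Dolbeault formal, so by Proposition \ref{thm:dolb-mass} \emph{all} its Dolbeault--Massey products vanish and no choice of representatives at $t=0$ can succeed. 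This is exactly why the paper does not take the Nakamura manifold as central fibre: it first deforms in an auxiliary direction $t_1$ (equations \eqref{solv-t1t2} with $t_2=0$), so that $\eta_{(t_1,0)}^3\wedge\eta_{(t_1,0)}^{\bar 3}=\delbar\bigl(-\tfrac{i}{2t_1}\eta_{(t_1,0)}^1\bigr)$, and Lemma \ref{t1-nonzero} (whose non-triviality argument is the symmetrization step you gesture at) produces a non-vanishing product on $X_{(t_1^0,0)}$ for \emph{every} lattice; the holomorphic family is then obtained by fixing $t_1^0\neq0$ and varying a second parameter $t_2$. Your one-parameter scheme based at the undeformed manifold has no analogue of this first move, and that is a genuine gap, not a detail of presentation.

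The general-fibre half is also thinner than it appears. The Nakamura solvmanifold is not completely solvable, so its Dolbeault cohomology is not computed by invariant forms via a Nomizu-type theorem; it depends on the lattice and involves non-invariant representatives (of the shape $e^{z_1-\overline{z}_1}\cdot(\text{invariant})$), so ``identify the harmonics with invariant forms by symmetrization and check wedge-closure'' does not by itself establish geometric Dolbeault formality of the fibres, nor does Fr\"olicher semicontinuity substitute for it. The paper settles this by quoting Angella--Kasuya: after the change of coframe \eqref{solv-t2} the fibres with $t_2\neq0$ are precisely their family $(C_1)$, for suitable lattices $\Gamma_{t_2}$ they satisfy the $\del\delbar$-lemma (hence are Dolbeault formal), and the explicit harmonic representatives of \cite[Table 3]{AK} are closed under wedge, giving geometric Dolbeault formality. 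To complete your argument you would need either to reprove or to cite such cohomology computations, and to fix the lattice so that both halves (formality for $t\neq0$, non-vanishing product at $t=0$) hold simultaneously --- which is exactly what the lattice-independence of Lemma \ref{t1-nonzero} secures in the paper's proof.
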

In order to prove Theorem \ref{thm:dolb_non_closedness}, we will provide a family $\{Z_t\}_{t\in\Delta}$ of holomorphic deformations of the holomorphically parallelizable Nakamura manifold such that $Z_t$ is geometrically Dolbeault formal and Dolbeault formal, for $t\neq 0$, but $Z_0$ has a non-trivial Dolbeault-Massey triple product.

To this purpose, let us start by considering
the 6-dimensional simply-connected solvable Lie group $G$, with Lie algebra $\frg$ defined by the following structure equations of the frame $\{e^1,e^2,e^3,e^4,e^5,e^6\}$ of $\frg^*$
\begin{equation}\label{ecus-g}
\left\{
\begin{array}{rl}
\!\!\!&\!\!\! de^1 = e^{16}-e^{25}, \\[5pt]
\!\!\!&\!\!\! de^2 = e^{15}+e^{26}, \\[5pt]
\!\!\!&\!\!\! de^3 = -e^{36}+e^{45}, \\[5pt]
\!\!\!&\!\!\! de^4 = -e^{35}-e^{46}, \\[5pt]
\!\!\!&\!\!\! de^5 = de^6 = 0.
\end{array}
\right.
\end{equation}
We then consider the holomorphically parallelizable complex structure on $g^*$. Define the almost complex structure on $\frg^*$, which we will denote by $J_{(0,0)}$, by setting
$$
J_{(0,0)} e^1= -e^2, \ \ J_{(0,0)} e^2=e^1, \ \ J_{(0,0)} e^3=-e^4, \ \ J_{(0,0)} e^4=e^3, \ \ J_{(0,0)} e^5=e^6, \ \ J_{(0,0)} e^6=-e^5.
$$
Therefore, the following complex forms
$$
\eta_{(0,0)}^1 = e^1+i\, e^2,  \quad\quad  \eta_{(0,0)}^2 = e^3+i\,e^4,  \quad\quad  \eta_{(0,0)}^3 = \frac12 (e^5- i\,e^6),
$$
form a basis of (1,0)-forms for $(\frg^{\ast})^{1,0}$ whose complex structure equations are
\begin{equation}\label{solv-00}
d\eta_{(0,0)}^1 = 2i\,\eta_{(0,0)}^{13},\quad\quad
d\eta_{(0,0)}^2 = -2i\,\eta_{(0,0)}^{23},\quad\quad
d\eta_{(0,0)}^3 = 0.
\end{equation}
If we fix any lattice of maximal rank in $G$, we obtain a complex $3$-dimensional solvmanifold.

We note that the relations between the choice of lattices of the Nakamura holomorphically parallelizable manifold and the dimensions of the Dolbeault and Bott-Chern cohomologies have been studied, for example, in \cite{Nakamura,AKGlob}.
In particular, for every lattice $\Gamma$ of maximal rank, the $(0,1)$-form $\eta_{(0,0)}^{\bar 3}$ defines a non-zero Dolbeault cohomology class on the compact complex manifold $M_{(0,0)}=(\Gamma\backslash G,J_{(0,0)})$.
Hence, we can use the class $[\eta_{(0,0)}^{\bar 3}] \in H^{0,1}_{\db}(X_{(0,0)})$ to construct an appropriate holomorphic family of deformations.

Let $\mathbf{B}=\mathbb{C}\times \Delta(0,1) \subset \mathbb{C}^2$, where $\Delta(0,1)=\{ z\in \mathbb{C}\mid |z|<1 \}$.
For any $\mathbf{t}=(t_1,t_2) \in \mathbf{B}$, we set
\begin{equation}\label{deform}
\eta_\mathbf{t}^1:=\eta_{(0,0)}^1+t_1\, \eta_{(0,0)}^{\bar{3}},\ \ \eta_\mathbf{t}^2:=\eta_{(0,0)}^2,\ \ \eta_\mathbf{t}^3:=\eta_{(0,0)}^3+ t_2\, \eta_{(0,0)}^{\bar{3}}.
\end{equation}
A direct computation shows that the structure equations of the $(1,0)$-forms $\{\eta_\mathbf{t}^1,\eta_\mathbf{t}^2,\eta_\mathbf{t}^3\}$ are
\begin{equation}\label{solv-t1t2}
\left\{
\begin{array}{rcl}
d\eta_\mathbf{t}^1 \!\!&=&\!\! \frac{2i}{1-|t_2|^2}\,\eta_\mathbf{t}^{13}-\frac{2it_2}{1-|t_2|^2}\,\eta_\mathbf{t}^{1\bar{3}}+\frac{2it_1}{1-|t_2|^2}\,\eta_\mathbf{t}^{3\bar{3}},\\[8pt]
d\eta_\mathbf{t}^2 \!\!&=&\!\! -\frac{2i}{1-|t_2|^2}\,\eta_\mathbf{t}^{23}+\frac{2it_2}{1-|t_2|^2}\,\eta_\mathbf{t}^{2\bar{3}},\\[8pt]
d\eta_\mathbf{t}^3 \!\!&=&\!\! 0.
\end{array}
\right.
\end{equation}

Therefore, for any $\mathbf{t}=(t_1,t_2) \in \mathbf{B}$, we have a left-invariant complex structure $J_\mathbf{t}$ on $\Gamma\backslash G$,
and so a compact complex manifold $M_{\bf{t}}=(\Gamma\backslash G,J_\mathbf{t})$ of complex dimension 3.


Before proceeding, we need the following result.

\begin{lem}\label{t1-nonzero}
If $t_1\not=0$ and $t_2=0$, then the compact complex manifold $X_{\mathbf{t}=(t_1,0)}$ has a non-vanishing Dolbeault-Massey triple product.
\end{lem}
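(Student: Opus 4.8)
The strategy is to exhibit three explicit left-invariant Dolbeault cohomology classes on $X_{(t_1,0)}$ whose pairwise products vanish in $H^{\bullet,\bullet}_{\db}$, choose convenient primitives for those products, and then show that the resulting Dolbeault-Massey class is nonzero in the appropriate quotient space. Since the complex structure $J_{(t_1,0)}$ is left-invariant, all the computations can be carried out at the level of the Lie algebra (invoking the standard fact that left-invariant forms compute the Dolbeault cohomology of this solvmanifold, as in the references to Nakamura and to \cite{AKGlob} cited above), so everything reduces to linear algebra with the structure equations \eqref{solv-t1t2} specialized to $t_2=0$, namely $d\eta^1 = 2i\,\eta^{13} + 2it_1\,\eta^{3\bar 3}$, $d\eta^2 = -2i\,\eta^{23}$, $d\eta^3 = 0$.

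\textbf{Key steps.} First I would identify the building blocks. The form $\eta^{\bar 3}$ is $\db$-closed (conjugate the equation $d\eta^3=0$), and as remarked before the statement it gives a nonzero class in $H^{0,1}_{\db}$; likewise $\eta^{\bar 2}$ and the $(1,0)$ or $(2,0)$ forms annihilated by $\db$ are natural candidates. I would pick the triple $\mathfrak{a}=\mathfrak{b}=\mathfrak{c}=[\eta^{\bar 3}]$, or a closely related triple such as $[\eta^{\bar 3}], [\eta^{\bar 3}], [\eta^{\bar 2}]$, and check: (i) each representative is $\db$-closed; (ii) the pairwise wedge products $\eta^{\bar 3}\wedge\eta^{\bar 3}=0$ trivially, so the primitive $f_{\alpha\beta}$ can be taken to be $0$ for that pair, while for a pair like $\eta^{\bar 3}\wedge\eta^{\bar 2}$ one must verify it is $\db$-exact and produce an explicit primitive $f$ with $\db f=\eta^{\bar 3}\wedge\eta^{\bar 2}$ (this is where the $t_1$-term in $d\eta^1$ enters: $\db\eta^{\bar 1}$ or $\db$ of some $(1,1)$-form will be forced to hit a multiple of $\eta^{1\bar 3}$ or similar, and the nonvanishing of $t_1$ is what makes the relevant primitive exist, or conversely what makes a naively-guessed Massey representative nonzero). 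Then form the Massey representative $f_{\alpha\beta}\wedge\gamma \pm \alpha\wedge f_{\beta\gamma}$ and compute its Dolbeault class.

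\textbf{The main obstacle.} The crux is step (iii): showing the Massey representative is \emph{not} a $\db$-coboundary, and moreover is not killed modulo the indeterminacy subspace $\mathfrak{c}\smile H^{\bullet,\bullet}_{\db} + \mathfrak{a}\smile H^{\bullet,\bullet}_{\db}$. Concretely this means computing the relevant Dolbeault cohomology group of $X_{(t_1,0)}$ in the bidegree of the product (using \eqref{solv-t1t2} with $t_2=0$ to list $\db$-closed and $\db$-exact left-invariant forms), writing down the indeterminacy subspace explicitly, and verifying the Massey class survives the quotient — this is a finite but delicate linear-algebra verification, and the role of the hypothesis $t_1\neq 0$ will be precisely to guarantee that a particular form which would otherwise be exact (when $t_1=0$, i.e. on the parallelizable Nakamura manifold $M_{(0,0)}$) is no longer so, or dually that the product class picks up a genuinely nontrivial contribution. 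I would organize the computation around the $\del$-action on $H_{\db}$ as in Proposition \ref{prop:geom_form}(2), since the obstruction ultimately records the failure of the $\del\db$-lemma, and the Nakamura manifold is the standard source of such failures.
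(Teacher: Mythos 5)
Your overall strategy (explicit invariant classes, an explicit $\db$-primitive whose existence uses $t_1\neq 0$, then a check against the indeterminacy) is the same as the paper's, but the concrete choices you propose do not work, and the choice of the triple is where the actual content of the lemma lies. The triple $\bigl([\eta^{\bar 3}],[\eta^{\bar 3}],[\eta^{\bar 3}]\bigr)$ has both pairwise products identically zero, so one may take zero primitives and the Massey product contains $0$, i.e.\ it vanishes; and $\eta^{\bar 2}$ is not $\db$-closed (from \eqref{solv-t1t2} at $t_2=0$ one gets $\db\eta^{\bar 2}=2i\,\eta^{\bar2\bar3}$), so $[\eta^{\bar 2}]$ is not a Dolbeault class at all. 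The paper's triple mixes bidegrees: $\mathfrak{a}=\mathfrak{b}=[\eta^{3}]\in H^{1,0}_{\db}$ and $\mathfrak{c}=[\eta^{\bar 3}]\in H^{0,1}_{\db}$. Then $\eta^{3}\wedge\eta^{3}=0$ and, crucially, $\eta^{3}\wedge\eta^{\bar 3}=\db\bigl(\tfrac{-i}{2t_1}\eta^{1}\bigr)$ because $\db\eta^{1}=2it_1\,\eta^{3\bar3}$; this is the precise point where $t_1\neq0$ enters, and it requires a $(1,0)$-class in the triple, which both of your candidate triples lack. The resulting representative is (up to a constant) $\eta^{1}\wedge\eta^{3}$, which is $\db$-closed and nonzero, hence a nonzero class in $H^{2,0}_{\db}$ (there are no nonzero $\db$-exact $(2,0)$-forms), and the indeterminacy reduces to $[\eta^{3}]\smile H^{1,0}_{\db}$.

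A second gap is your appeal to ``the standard fact that left-invariant forms compute the Dolbeault cohomology of this solvmanifold.'' For the Nakamura manifold this is lattice-dependent and not available off the shelf (the paper itself stresses, citing \cite{AKGlob}, that $h^{0,1}_{\db}$ depends on the choice of lattice), so you cannot justify computing the relevant cohomology groups and the indeterminacy subspace by listing invariant forms. The paper avoids this entirely: non-exactness in bidegree $(2,0)$ is automatic, and for the indeterminacy it assumes $\eta^{1}\wedge\eta^{3}=\alpha\wedge\eta^{3}$ for an arbitrary (not necessarily invariant) $\db$-closed $(1,0)$-form $\alpha$, applies the symmetrization process (which preserves bidegree, commutes with $\db$, and is compatible with wedging by invariant forms) to replace $\alpha$ by an invariant $\db$-closed $\tilde\alpha$, observes that invariance and $\db$-closedness force $\tilde\alpha=\lambda\eta^{2}+\mu\eta^{3}$ (again using $t_1\neq0$), and concludes $(\eta^{1}-\tilde\alpha)\wedge\eta^{3}\neq0$, a contradiction. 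Your plan needs either this symmetrization step or a lattice-specific argument; the suggestion to organize the computation around the $\del$-action on $H_{\db}$ as in Proposition \ref{prop:geom_form}(2) does not supply it.
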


\begin{proof}[Proof of Lemma \ref{t1-nonzero}]
Let us consider the Dolbeault cohomology classes $[\eta_{(t_1,0)}^3] \in H^{1,0}_{\db}(X_{(t_1,0)})$ and $[\eta_{(t_1,0)}^{\bar 3}] \in H^{0,1}_{\db}(X_{(t_1,0)})$.
From \eqref{solv-t1t2} for $t_2=0$ and $t_1\not=0$, we have the following relations:
$$
\eta_{(t_1,0)}^3 \wedge \eta_{(t_1,0)}^3 = 0, \quad\quad
\eta_{(t_1,0)}^3 \wedge \eta_{(t_1,0)}^{\bar 3} = \db \left( \frac{-i}{2 t_1} \eta_{(t_1,0)}^1 \right).
$$
Hence, $\langle [\eta_{(t_1,0)}^3],[\eta_{(t_1,0)}^3],[\eta_{(t_1,0)}^{\bar 3}] \rangle$ is a Dolbeault-Massey triple product
which is represented (up to a constant) by the (2,0)-form $\eta_{(t_1,0)}^1 \wedge \eta_{(t_1,0)}^3$.
This (2,0)-form obviously defines a non-zero Dolbeault cohomology class in $H^{2,0}_{\db}(X_{(t_1,0)})$.
Now, for showing that the product is non-trivial, it remains to prove that the class $[\eta_{(t_1,0)}^1 \wedge \eta_{(t_1,0)}^3]$
does not belong to the ideal $[\eta_{(t_1,0)}^3] \cdot H^{1,0}_{\db}(X_{(t_1,0)})$.

Suppose that $[\eta_{(t_1,0)}^1 \wedge \eta_{(t_1,0)}^3] \in [\eta_{(t_1,0)}^3] \cdot H^{1,0}_{\db}(X_{(t_1,0)})$. Then, there exists a (1,0)-form
$\alpha$ on the manifold $X_{(t_1,0)}$ satisfying $\db\alpha=0$ and $\eta_{(t_1,0)}^1 \wedge \eta_{(t_1,0)}^3=\alpha \wedge \eta_{(t_1,0)}^3$.
Now, since the complex structure is left-invariant, we can apply the symmetrization process (it preserves the bidegree of the forms) to
get an invariant (1,0)-form $\tilde\alpha$ which is $\db$-closed and satisfies
$(\eta_{(t_1,0)}^1 -\tilde\alpha) \wedge \eta_{(t_1,0)}^3=0$.
But from \eqref{solv-t1t2} for $t_2=0$ and $t_1\not=0$, it follows that
$\tilde\alpha=\lambda\, \eta_{(t_1,0)}^2+ \mu\, \eta_{(t_1,0)}^3$ for some constants $\lambda,\mu \in \mathbb{C}$ in order to be $\db$-closed,
so the condition $(\eta_{(t_1,0)}^1 -\tilde\alpha) \wedge \eta_{(t_1,0)}^3=0$ cannot be satisfied.
\end{proof}

\begin{proof}[Proof of Theorem \ref{thm:dolb_non_closedness}]
Let us now fix any $t_1^0 \in \mathbb{C}\setminus \{0\}$. For any $t_2 \in \Delta(0,1)=\{ z\in \mathbb{C}\mid |z|<1 \}$,
we consider the left-invariant complex structure $J_{\mathbf{t}=(t_1^0,t_2)}$ on $G$.
By~\eqref{solv-t1t2} the complex structure equations are
\begin{equation}\label{solv-t10-t2}
\left\{
\begin{array}{rcl}
d\eta_\mathbf{t}^1 \!\!&=&\!\! \frac{2i}{1-|t_2|^2}\,\eta_\mathbf{t}^{13}-\frac{2it_2}{1-|t_2|^2}\,\eta_\mathbf{t}^{1\bar{3}}+\frac{2it_1^0}{1-|t_2|^2}\,\eta_\mathbf{t}^{3\bar{3}},\\[8pt]
d\eta_\mathbf{t}^2 \!\!&=&\!\! -\frac{2i}{1-|t_2|^2}\,\eta_\mathbf{t}^{23}+\frac{2it_2}{1-|t_2|^2}\,\eta_\mathbf{t}^{2\bar{3}},\\[8pt]
d\eta_\mathbf{t}^3 \!\!&=&\!\! 0.
\end{array}
\right.
\end{equation}
If we take any $t_2 \in \Delta(0,1)\setminus\{0\}$, we consider the basis $\{\tau_\mathbf{t}^1,\tau_\mathbf{t}^2,\tau_\mathbf{t}^3\}$ of $(1,0)$-forms
with respect to  $J_{\mathbf{t}}$ defined by
\[
\tau_\mathbf{t}^1:= 2i\, \eta_\mathbf{t}^3, \quad
\tau_\mathbf{t}^2:=\eta_\mathbf{t}^1 + \frac{t_1^0}{t_2} \, \eta_\mathbf{t}^3, \quad
\tau_\mathbf{t}^3:=\eta_\mathbf{t}^2.
\]
It is easy to check with respect to this basis, the complex structure equations become
\begin{equation}\label{solv-t2}
\left\{
\begin{array}{rcl}
d\tau_\mathbf{t}^1 \!\!&=&\!\! 0,\\[8pt]
d\tau_\mathbf{t}^2 \!\!&=&\!\! -\frac{1}{1-|t_2|^2}\,\tau_\mathbf{t}^{12}+\frac{t_2}{1-|t_2|^2}\,\tau_\mathbf{t}^{2\bar{1}},\\[8pt]
d\tau_\mathbf{t}^3 \!\!&=&\!\! \frac{1}{1-|t_2|^2}\,\tau_\mathbf{t}^{13}-\frac{t_2}{1-|t_2|^2}\,\tau_\mathbf{t}^{3\bar{1}}.
\end{array}
\right.
\end{equation}

In \cite{AK} it is proved that there is a family of lattices $\{\Gamma_{t_2}\}_{t_2\in \Delta}$ on the Lie group $G$ such that the compact
manifold $\Gamma_{t_2}\backslash G$ endowed with the complex structure $\{J_{(t_1^0,t_2)}\}_{t_2\in\Delta}$ given by \eqref{solv-t2}
satisfies the $\partial\db$-lemma for any $t_2 \in \Delta(0,1)\setminus\{0\}$, and, therefore, is Dolbeault formal. Indeed, notice that the equations \eqref{solv-t2} are precisely the complex equations found in \cite[Table 3]{AK} for the holomorphic deformation $(C_1)$ in \cite[Proposition 4.2]{AK}.

Also, it is easy to check that the harmonic representatives of Dolbeault cohomology listed in \cite[Table 3]{AK} with respect to the canonical metric have a structure of algebra with respect to $\wedge$, therefore $M_{\bf{t}}$ is also geometrically Dolbeault formal.

Hence, we consider the following holomorphic family of compact complex manifolds $\{Z_t\}_{t\in \Delta}$.
Let us fix any $t_1^0 \in \mathbb{C}\setminus\{0\}$ and consider $t=t_2$ for $t \in \Delta(0,1)$. We take the previous lattices $\Gamma_t:=\Gamma_{t_2}$ on the Lie group $G$ given in \cite{AK} and the (left-invariant) complex structure
$J_t=J_{(t_1^0,t)}$ on $G$, to obtain the family of compact complex manifolds $\{Z_t\}=\{\Gamma_t\backslash G, J_t\}$.

As we pointed out above, each compact complex manifold $Z_t$
is Dolbeault formal and geometrically Dolbeault formal for any $t\not= 0$. However, the central fiber $Z_0$
has a non-vanishing Dolbeault-Massey triple product by Lemma \ref{t1-nonzero}, since this result
holds for any lattice of maximal rank in $G$, in particular for the given lattice $\Gamma$.
\end{proof}

\section{Bott-Chern formality is not closed}\label{sec-Bott_Chern}
In this section, we prove the non closedness result for geometrically-Bott-Chern-formal manifolds and the vanishing of Aeppli-Bott-Chern-Massey products.

As for Dolbeault formality in section \ref{sec-Dolbeault}, it suffices to show the existence of a holomorphic family of compact complex manifolds $\{M_t\}_{t\in\Delta}$, $\Delta=\{z\in\C:|z|<1\}$, such that $M_t$ is geometrically-Bott-Chern formal for $t\in\Delta\setminus\{0\}$, but $M_0$ admits a non-vanishing Aeppli-Bott-Chern-Massey triple product. In fact, by Proposition \ref{prop:gf_ABC}, $M_t$ is geometrically-Bott-Chern formal and also has no non-vanishing Aeppli-Bott-Chern-Massey triple products, whereas $M_0$ would be not geometrically-Bott-Chern formal, thus proving the following result.
\begin{thm}\label{thm:bott_non_closedness}
The property of being geometrically-Bott-Chern-formal and the vanishing of Aeppli-Bott-Chern-Massey triple products are not closed under holomorphic deformations.\end{thm}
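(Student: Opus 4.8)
The plan is to mirror the strategy of Theorem~\ref{thm:dolb_non_closedness}, but now work with a presentation of the holomorphically parallelizable Nakamura manifold adapted to Bott-Chern cohomology. Concretely, I would reuse the Lie group $G$ with structure equations \eqref{ecus-g} and the family of complex structures $\{J_{\mathbf{t}}\}_{\mathbf{t}\in\mathbf{B}}$ given by \eqref{deform}–\eqref{solv-t1t2}, together with the change of frame $\{\tau_{\mathbf{t}}^1,\tau_{\mathbf{t}}^2,\tau_{\mathbf{t}}^3\}$ of \eqref{solv-t2}. For $t_2\neq 0$ the structure equations \eqref{solv-t2} coincide with the family $(C_1)$ of \cite{AK}, which satisfies the $\del\delbar$-lemma for a suitable family of lattices $\{\Gamma_{t_2}\}$; since the $\del\delbar$-lemma implies geometrical Bott-Chern formality is \emph{not} automatic, I would instead verify directly that the harmonic Bott-Chern representatives listed in \cite[Table~3]{AK} with respect to the canonical metric are closed under $\wedge$, hence $M_{\mathbf{t}}$ is geometrically-Bott-Chern-formal for $t_2\neq 0$. (Alternatively, one exhibits a Hermitian metric on each $\Gamma_{t_2}\backslash G$ whose Bott-Chern harmonic space is a wedge-subalgebra, using left-invariance and the symmetrization trick.)

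\textbf{The central fiber.} For $t_2=0$ (and a fixed $t_1^0\neq 0$, as in the Dolbeault case, or even $t_1=t_2=0$) I would produce a non-vanishing Aeppli-Bott-Chern-Massey triple product on $M_0=(\Gamma\backslash G, J_{(0,0)})$ using the classes built from $\eta_{(0,0)}^3$ and $\eta_{(0,0)}^{\bar 3}$. From \eqref{solv-00} one has $d\eta_{(0,0)}^3=0$, so $\eta_{(0,0)}^3$ and its conjugate define Bott-Chern classes; moreover $\eta_{(0,0)}^{3}\wedge\eta_{(0,0)}^{3}=0$ and $\eta_{(0,0)}^{3}\wedge\eta_{(0,0)}^{\bar 3}$ is $\del\delbar$-exact because, up to a constant, it equals $\del\delbar$ of a left-invariant function-like primitive coming from $\eta_{(0,0)}^{1}$ (this is the Bott-Chern analogue of the relation $\eta^{3}\wedge\eta^{\bar 3}=\delbar(\tfrac{-i}{2t_1}\eta^1)$ used in Lemma~\ref{t1-nonzero}; the holomorphic parallelizability makes $\del\delbar$ of the relevant $(0,0)$- or $(1,0)$-primitive reproduce it). Then $\langle[\eta^3]_{BC},[\eta^3]_{BC},[\eta^{\bar 3}]_{BC}\rangle_{ABC}$ lives in $H_A^{\bullet,\bullet}(M_0)$ modulo the indeterminacy ideal of \eqref{def:ABCM}, and is represented up to a constant by (a component of) $\eta^1\wedge\eta^3$ or its conjugate partner. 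As in Lemma~\ref{t1-nonzero}, by left-invariance and symmetrization the question of whether this class is killed by the indeterminacy reduces to a finite linear-algebra computation on $\frg^*$: one checks that $[\eta^1\wedge\eta^3]_A$ cannot be written as $[\eta^3]_{BC}\smile[\text{something}]_A+[\eta^{\bar 3}]_{BC}\smile[\text{something}]_A$, because no $\del\delbar$-closed invariant form of the right bidegree wedges with $\eta^3$ (resp.\ $\eta^{\bar 3}$) to produce it, owing to the shape of \eqref{solv-00}.

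\textbf{Conclusion of the argument.} Once the central fiber carries a non-vanishing $ABC$-Massey triple product while every nearby fiber is geometrically-Bott-Chern-formal, Proposition~\ref{prop:gf_ABC} gives that every $ABC$-Massey triple product vanishes on $M_{\mathbf{t}}$ for $t\neq 0$ but not on $M_0$; hence $M_0$ is not geometrically-Bott-Chern-formal either, and both properties fail to be closed under holomorphic deformations, proving Theorem~\ref{thm:bott_non_closedness}. \textbf{The main obstacle} I anticipate is the verification on the nearby fibers: unlike the Dolbeault case, geometrical Bott-Chern formality does not follow formally from the $\del\delbar$-lemma, so one genuinely has to compute the Bott-Chern harmonic spaces $\mathcal{H}_{BC}^{\bullet,\bullet}$ for the deformed complex structures \eqref{solv-t2} (for which Theorem~\ref{thm:coom_orb2}/classical Hodge theory plus left-invariance reduce things to the Lie algebra, but the fourth-order operator $\Delta_{BC}$ still makes this the computational heart of the proof) and check that each pairwise wedge product of harmonic Bott-Chern forms is again $\Delta_{BC}$-harmonic — and, dually via $\ast$, the Aeppli side — for the canonical metric; a secondary subtlety is checking that the non-triviality at $t=0$ does not depend on the choice of lattice of maximal rank, exactly as in the last paragraph of the proof of Theorem~\ref{thm:dolb_non_closedness}.
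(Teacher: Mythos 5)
Your overall strategy (show the fibers $t\neq 0$ are geometrically-Bott-Chern-formal by checking that the explicit Bott-Chern harmonic representatives from \cite{AK} form a wedge-algebra, exhibit a non-vanishing $ABC$-Massey product on the central fiber, and conclude with Proposition \ref{prop:gf_ABC}) is the paper's strategy, and your treatment of the nearby fibers is essentially what the paper does. The genuine gap is the central fiber. The product $\langle[\eta^3]_{BC},[\eta^3]_{BC},[\eta^{\bar 3}]_{BC}\rangle_{ABC}$ you propose is not even defined on $(\Gamma\backslash G,J_{(0,0)})$: by \eqref{def:ABCM} one needs $\eta^3\wedge\eta^{\bar 3}$ to be $\del\delbar$-exact with a primitive of bidegree $(0,0)$, i.e.\ a function, whereas $\eta_{(0,0)}^{3}\wedge\eta_{(0,0)}^{\bar 3}=\tfrac{i}{2}e^{56}$ is a closed invariant form that is not even $d$-exact (no $de^j$ in \eqref{ecus-g} contains $e^{56}$, and invariant cohomology injects into de Rham cohomology), so $[\eta^{3\bar 3}]_{BC}\neq 0$. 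The relation $\eta^3\wedge\eta^{\bar 3}=\db\bigl(\tfrac{-i}{2t_1}\eta^1\bigr)$ of Lemma \ref{t1-nonzero} is only $\db$-exactness, is available only for $t_1\neq 0$, and a $(1,0)$-form cannot serve as a $\del\delbar$-primitive; there is no ``function-like primitive coming from $\eta^1$''. There is also a bidegree mismatch: an $ABC$-product of classes of bidegrees $(1,0)$, $(1,0)$, $(0,1)$ would live in $H_A^{1,0}$, so it could never be represented by (a multiple of) $\eta^1\wedge\eta^3$.

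What the paper actually does at $t=0$ is different in substance, not just in bookkeeping: it passes to the presentation $G=\C\ltimes\C^2$ with the special lattice $\Gamma=(a\Z+2\pi\Z)\ltimes\Gamma''$ (chosen so that $h^{0,1}_{\db}=3$), deforms by $\eta^1_t=\eta^1+t\overline{\eta}^1$, and on the central fiber uses the classes $\mathfrak{a}=[\eta^{12}]_{BC}$, $\mathfrak{b}=[e^{\overline{z}_1-z_1}\eta^{3\overline{1}}]_{BC}$, $\mathfrak{c}=[\eta^{\overline{12}}]_{BC}$, exploiting $\eta^{12}\wedge e^{\overline{z}_1-z_1}\eta^{3\overline{1}}=\del\delbar(-e^{\overline{z}_1-z_1}\eta^{23})$; non-triviality of the resulting class $[e^{\overline{z}_1-z_1}\eta^{23\overline{12}}]_A$ modulo the indeterminacy ideal is then checked against the explicit Aeppli harmonic spaces. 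This also undercuts two of your auxiliary remarks: the relevant Bott-Chern and Aeppli classes at $t=0$ are represented by non-invariant forms (factors $e^{\pm(z_1-\overline{z}_1)}$), so the symmetrization/finite-linear-algebra reduction you invoke does not apply to them; and the non-vanishing at $t=0$ is not lattice-independent --- it is precisely the special choice of lattice (cf.\ \cite{AKGlob}) that makes these classes exist, which is why the paper changes presentation instead of reusing the family of Section \ref{sec-Dolbeault} verbatim. To repair your argument you would need either to produce classes of higher bidegree with a genuine $\del\delbar$-primitive on your chosen central fiber, or to follow the paper's choice of lattice and non-invariant representatives.
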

In order to prove Theorem \ref{thm:bott_non_closedness}, we will use a different representation of the Nakamura holomorphically parallelizable manifolds by choosing a suitable family of lattices. 

Let $(M=\Gamma\backslash G,J)$ be the Nakamura holomorphically parallelizable manifold, where
\begin{itemize}
\item $G:=\C\ltimes\C^2$ is the solvable complex Lie group defined by $\gamma(z_1)*(z_2,z_3)=(e^{-z_1}z_2,e^{z_1}z_3)$;
\item $\Gamma:=(a\Z+2\pi\Z)\ltimes \Gamma''$ is a lattice of $G$ of maximal rank, with $\Gamma''$ a lattice of $\C^2$;
\item $J$ is the holomorphically parallelizable complex structure on $M$ induced by the natural standard complex structure on $\C^3\simeq\C\ltimes \C^2$.
\end{itemize}
In particular, we point out that with this choice of $\Gamma$, it holds that $h_{\delbar}^{0,1}(M,J)=3$ (see \cite{AKGlob}) and a basis of invariant $(1,0)$-forms is given by $\{\eta^1:=dz^1,\eta^2:=e^{-z_1}dz^2,\eta^3:=e^{z_1}dz^3\}$ whose structure equations are
\begin{equation}\label{eq:struct_nak}
d\eta^1=0,\quad
d\eta^2=-\eta^{12},\quad
d\eta^3=\eta^{13}.
\end{equation}
Since $[\eta^{\overline{1}}]\in H_{\delbar}^{0,1}(M)$ is a non-zero cohomology class, we can consider the deformation constructed in \cite{AK} given by the $(0,1)$-vector form $\phi(t)$ as follows
\begin{equation*}
\phi(t):=t\frac{\del}{\del z^1}\otimes \overline{\eta}^1, \qquad t\in\Delta.
\end{equation*}
The resulting almost-complex structure $J_t$ is then characterized by the following coframe of $(1,0)$-forms on $(M,J_t)$
\begin{equation*}
\begin{cases}
\eta_t^1:=\eta^1+t\overline{\eta}^1\\
\eta_t^2:=\eta^2\\
\eta_t^3:=\eta^3,
\end{cases}
\end{equation*}
whose structure equations are
\begin{equation}\label{Nak_t}
\left\{
\begin{array}{rcl}
d\eta_t^1 \!\!&=&\!\! 0,\\[8pt]
d\eta_t^2 \!\!&=&\!\! -\frac{1}{1-|t|^2}\,\eta_t^{12}+\frac{t}{1-|t|^2}\,\eta_t^{2\bar{1}},\\[8pt]
d\eta_t^3 \!\!&=&\!\! \frac{1}{1-|t|^2}\,\eta_t^{13}-\frac{t}{1-|t|^2}\,\eta_t^{3\bar{1}}.
\end{array}
\right.
\end{equation}
It is clear that $J_t$ is integrable, thus giving rise to the holomorphic family of compact complex manifolds $(M,J_t)$, for every $t\in\Delta.$

Let us fix on $M_t$ the Hermitian metric $g_t$ whose fundamental form is $\omega_t=\frac{i}{2}(\eta_t^{1\overline{1}}+\eta_t^{2\overline{2}}+\eta_t^{3\overline{3}})$. Then, as proved in \cite{AK}, for every $t\neq0$, the manifold $(M,J_t)$ satisfies the $\del\delbar$-lemma and the harmonic representatives of the Bott-Chern cohomology of $(M,J_t)$ for $t\neq0$ are as in Table \ref{tab:BC_cohom}.

It is easy to check that $\mathcal{H}_{BC}^{\bullet,\bullet}(M,g_t)$ has a structure of algebra induced by the $\wedge$ product of forms. Therefore, the manifolds $(M,J_t)$ are all geometrically Bott-Chern formal for $t\neq 0$.

\begin{proof}[Proof of Theorem \ref{thm:bott_non_closedness}]
It will suffices to construct a non zero Aeppli-Bott-Chern Massey triple product on $(M,J_0)=(M,J)$.

As proved in \cite{AKGlob}, the harmonic representatives of the Bott-Chern cohomology of $(M,J)$ with respect to the canonical diagonal metric $g$ are as listed in Table \ref{tab:BC_cohom_nak}.

As a first remark, we notice that $\mathcal{H}_{BC}^{\bullet,\bullet}(M,g)$ does not have a structure of algebra induced by the $\wedge$ product of form. In fact, the product $\eta^{12}\wedge\left( e^{\overline{z}_1-z_1}\eta^{3\overline{1}}\right)$ is not harmonic with respect to the Bott-Chern Laplacian, since
\[
e^{\overline{z}_1-z_1}\eta^{123\overline{1}}=\del\delbar(-e^{\overline{z}_1-z_1}\eta^{23}).
\]
Therefore, take the following Bott-Chern cohomology classes
\begin{equation}
\mathfrak{a}:=[\eta^{12}]_{BC}, \quad \mathfrak{b}=[e^{\overline{z}_1-z_1}\eta^{3\overline{1}}]_{BC}, \quad \mathfrak{c}:=[\eta^{\overline{12}}]_{BC}.
\end{equation}
Since $\mathfrak{a}\cup \mathfrak{b}=[e^{\overline{z}_1-z_1}\eta^{123\overline{1}}]=0\in H_{BC}^{3,1}(M)$ and clearly $\mathfrak{b}\cup \mathfrak{c}=0\in H_{BC}^{1,3}(M)$, by definition \ref{def:ABCM} we obtain that
\begin{equation}
[e^{\overline{z}_1-z_1}\eta^{23\overline{12}}]_A\in\frac{H_A^{2,2}(M)}{[\eta^{12}]_{BC}\cup H_A^{0,2}(M) + [\eta^{\overline{12}}]_{BC}\cup H_A^{2,0}(M)}
\end{equation}
is the Aeppli-Bott-Chern-Massey triple product $\langle \mathfrak{a},\mathfrak{b},\mathfrak{c}\rangle_{ABC}$.

We proceed by showing that, as a cohomology class, $[e^{\overline{z}_1-z_1}\eta^{23\overline{12}}]_A\neq 0$.
Indeed, it can be easily seen from structure equations (\ref{eq:struct_nak}) that the form $e^{\overline{z}_1-z_1}\eta^{23\overline{12}}$ is $\del\delbar$-closed and, since $\ast \left(e^{\overline{z}_1-z_1}\eta^{23\overline{12}}\right)=e^{z_1-\overline{z}_1}\eta^{1\overline{3}}$, it is a light matter of computations to show that
\begin{equation*}
\del\ast \left(e^{\overline{z}_1-z_1}\eta^{23\overline{12}}\right)=0, \quad \delbar\ast \left(e^{\overline{z}_1-z_1}\eta^{23\overline{12}}\right)=0.
\end{equation*}
Therefore, conditions (\ref{eq:harm_A_forms}) assure that $e^{\overline{z}_1-z_1}\eta^{23\overline{12}}$ is $\Delta_A$-harmonic and therefore, as a Aeppli cohomology class, $[e^{\overline{z}_1-z_1}\eta^{23\overline{12}}]_A\neq 0$.
Actually, from Table \ref{tab:BC_cohom_nak}, one can directly compute the spaces $H_A^{2,2}(M)$, $H_A^{2,0}(M)$, and $H_A^{0,2}(M)$, by the relations $H_A^{p,q}(M)=\ast\left(H_{BC}^{n-p,n-q}(M) \right)$, obtaining
\begin{align*}
H_A^{2,0}(M)&=\C\langle [e^{z_1-\overline{z}_1}\eta^{12}],[e^{\overline{z}_1-z_1}\eta^{13}],[\eta^{23}] \rangle,\\
H_A^{2,2}(M)&=\C\langle[e^{z_1-\overline{z}_1}\eta^{12\overline{13}}],[e^{z_1-\overline{z}_1}\eta^{12\overline{23}}],[e^{\overline{z}_1-z_1}\eta^{13\overline{12}}],[e^{\overline{z}_1-z_1}\eta^{13\overline{23}}],[e^{\overline{z}_1-z_1}\eta^{23\overline{12}}],[e^{z_1-\overline{z}_1}\eta^{23\overline{13}}],[\eta^{23\overline{23}}]\rangle,\\
H_A^{0,2}(M)&=\C\langle[e^{\overline{z}_1-z_1}\eta^{\overline{12}}],[e^{z_1-\overline{z}_1}\eta^{\overline{13}}],[e^{\overline{23}}] \rangle,
\end{align*}
in which we displayed the $\Delta_A$-harmonic representatives with respect to the canonical diagonal metric $g$ on $(M,J)$.

It remains to show that $[e^{\overline{z}_1-z_1}\eta^{23\overline{12}}]_A\notin [\eta^{12}]_{BC}\cup H_A^{0,2}(M) + [\eta^{\overline{12}}]_{BC}\cup H_A^{2,0}(M)$.

We point out that a generic element $\mathfrak{d}\in\eta^{12}]_{BC}\cup H_A^{0,2}(M) + [\eta^{\overline{12}}]_{BC}\cup H_A^{2,0}(M)$ can be written as
\begin{equation*}
\mathfrak{d}= [Ae^{\overline{z}_1-z_1}\eta^{12\overline{12}}+B e^{z_1-\overline{z}_1}\eta^{12\overline{13}}+C\eta^{12\overline{23}}A'e^{z_1-\overline{z}_1}\eta^{13\overline{13}}+B'e^{\overline{z}_1-z_1}\eta^{13\overline{12}}+C'\eta^{23\overline{12}}]_A,
\end{equation*}
for $A,B,C,A',B',C'\in\C$.

By contradiction, let us suppose that
\begin{equation*}
[e^{\overline{z}_1-z_1}\eta^{23\overline{12}}]_A= [Ae^{\overline{z}_1-z_1}\eta^{12\overline{12}}+B e^{z_1-\overline{z}_1}\eta^{12\overline{13}}+C\eta^{12\overline{23}}A'e^{z_1-\overline{z}_1}\eta^{13\overline{13}}+B'e^{\overline{z}_1-z_1}\eta^{13\overline{12}}+C'\eta^{23\overline{12}}]_A,
\end{equation*}
for some $A,B,C,A',B',C'\in\C$, or equivalently, by definition of Aeppli cohomology, that
\begin{equation}\label{eq:nak_example}
e^{\overline{z}_1-z_1}\eta^{23\overline{12}}=Ae^{\overline{z}_1-z_1}\eta^{12\overline{12}}+B e^{z_1-\overline{z}_1}\eta^{12\overline{13}}+C\eta^{12\overline{23}}A'e^{z_1-\overline{z}_1}\eta^{13\overline{13}}+B'e^{\overline{z}_1-z_1}\eta^{13\overline{12}}+C'\eta^{23\overline{12}}+\del\lambda+\delbar\mu,
\end{equation}
for some forms $\lambda\in\mathcal{A}^{1,2}(M)$, $\mu\in\mathcal{A}^{2,1}(M)$.

However, we observe that the following forms are $\del$ or $\delbar$ exact, i.e.,
\begin{align*}
\begin{cases}
&\eta^{12\overline{23}}=\del(\eta^{2\overline{23}})\\
&e^{\overline{z}_1-z_1}\eta^{12\overline{12}}=\del(-\frac{1}{2}e^{\overline{z}_1-z_1}\eta^{2\overline{12}})\\
&\eta^{23\overline{12}}=\delbar(-\eta^{23\overline{2}})\\
&e^{z_1-\overline{z}_1}\eta^{13\overline{13}}=\delbar(-\frac{1}{2}e^{z_1-\overline{z}_1}\eta^{13\overline{3}}),
\end{cases}
\end{align*}
therefore equation (\ref{eq:nak_example}) reduces to
\begin{equation}\label{eq:nak_example2}
e^{\overline{z}_1-z_1}\eta^{23\overline{12}}=B e^{z_1-\overline{z}_1}\eta^{12\overline{13}}+B'e^{\overline{z}_1-z_1}\eta^{13\overline{12}}+\del\lambda+\delbar\mu.
\end{equation}
In particular, since $e^{\overline{z}_1-z_1}\eta^{23\overline{12}}, e^{z_1-\overline{z}_1}\eta^{12\overline{13}}, e^{\overline{z}_1-z_1}\eta^{13\overline{12}}\in\mathcal{H}_{A}^{2,2}(M,g)$, it must hold that
\begin{equation*}
e^{\overline{z}_1-z_1}\eta^{23\overline{12}}-(B e^{z_1-\overline{z}_1}\eta^{12\overline{13}}+B'e^{\overline{z}_1-z_1}\eta^{13\overline{12}})\in\mathcal{H}_A^{2,2}(M,g)
\end{equation*}
therefore, equation (\ref{eq:nak_example2}) boils down to
\begin{equation*}
e^{\overline{z}_1-z_1}\eta^{23\overline{12}}-B e^{z_1-\overline{z}_1}\eta^{12\overline{13}}-B'e^{\overline{z}_1-z_1}\eta^{13\overline{12}}=0,
\end{equation*}
for some $B,B'\in\C$, but this clearly cannot hold. Thus, we obtain a contradiction and hence
\begin{equation*}
[e^{\overline{z}_1-z_1}\eta^{23\overline{12}}]_A\notin [\eta^{12}]_{BC}\cup H_A^{0,2}(M) + [\eta^{\overline{12}}]_{BC}\cup H_A^{2,0}(M),
\end{equation*}
showing that $\langle\mathfrak{a},\mathfrak{b},\mathfrak{c}\rangle$ defines a non vanishing Aeppli-Bott-Chern-Massey triple product on $(M,J)$.

By Proposition \ref{prop:gf_ABC}, we can conclude that $(M,J)$ is also not geometrically-Bott-Chern formal.
\end{proof}

\section{Aeppli-Bott-Chern-Massey products and the $\del\delbar$-lemma}\label{sec:deldelbar_ABC}
In this section, we show that the Aeppli-Bott-Chern-Massey triple products are not an obstruction for the $\del\delbar$-lemma on a compact complex manifold, unlike Massey triple products and Dolbeault-Massey triple products, see Theorem \ref{thm:dolb-mass}. In fact, we will costruct a global-quotient-type complex orbifold by taking the quotient of the Iwasawa manifold with respect to the action of a finite group of biholomorphisms and we will prove that it satisfies  the $\del\delbar$-lemma but it admits a non-vanishing Aeppli-Bott-Chern-Massey triple product. As a final step, we will we will construct a smooth resolution of such complex orbifold still satisfying the $\del\delbar$-lemma and admitting a non-vanishing Aeppli-Bott-Chern-Massey triple product.

\begin{thm}\label{thm:ABC_deldelbar}
There exists a compact complex manifold satisfying the $\del\delbar$-lemma and admitting a non-vanishing $ABC$-Massey triple product.
\end{thm}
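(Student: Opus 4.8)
The plan is to exploit the observation from Section~\ref{sec-Bott_Chern} that the Iwasawa manifold $I_3=\Gamma''\backslash G'$ (with $G'$ the complex Heisenberg group and coframe $\{\eta^1,\eta^2,\eta^3\}$ satisfying $d\eta^1=d\eta^2=0$, $d\eta^3=-\eta^{12}$) already carries a non-vanishing $ABC$-Massey triple product, while of course $I_3$ itself does \emph{not} satisfy the $\del\delbar$-lemma. The idea is to kill the obstruction to the $\del\delbar$-lemma — which lives in the ``mixed'' part of cohomology coming from $\eta^{\bar 3}$ and $\eta^3$ — by passing to a finite quotient orbifold $\hat M = I_3/G$ for a suitable finite group $G$ of biholomorphisms acting diagonally on the $\eta^i$ by roots of unity, chosen so that: (i) the forms entering the $ABC$-Massey product (which will be built out of $\eta^{12}$, a representative involving $\eta^3\wedge\eta^{\bar 1}$ or similar, and a conjugate form) are $G$-invariant, hence descend to $\hat M$; and (ii) the Hodge numbers of $\hat M$, which by the theory recalled in Section~\ref{sec:orb} are just the $G$-invariant parts of those of $I_3$ (Theorem~\ref{thm:coom_orb2}), satisfy the Fr\"olicher equality \eqref{eq:fr-orb}, equivalently the conjugation symmetry of Dolbeault cohomology, so that $\hat M$ satisfies the $\del\delbar$-lemma as an orbifold.

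First I would write down explicitly the $G$-action: take $G$ cyclic, generated by $(\eta^1,\eta^2,\eta^3)\mapsto(\zeta^{a}\eta^1,\zeta^{b}\eta^2,\zeta^{a+b}\eta^3)$ for an appropriate root of unity $\zeta$ (the exponent on $\eta^3$ forced by $d\eta^3=-\eta^{12}$ so that the action is by biholomorphisms), possibly combined with a translation to make the action on the lattice well defined and the fixed-point set acceptable as orbifold singularities $\mathbb{C}^3/G$. Then I would compute the invariant Bott-Chern, Aeppli and Dolbeault cohomologies of $I_3$ under this action — these are tabulated in the literature (e.g.\ Angella's computations of the cohomology of the Iwasawa manifold and its small deformations) and the invariant subalgebra is easy to extract once the character of $G$ on each bidegree piece is known. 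The key bookkeeping step is to verify that the non-$\del\delbar$ classes of $I_3$ — the ones responsible for $\sum_{p+q=k}h_{\delbar}^{p,q}>b_k$ — all carry a nontrivial character and so disappear in $\hat M$, whereas the three classes $\mathfrak a,\mathfrak b,\mathfrak c$ needed for the Massey product survive, and that the would-be bounding forms $g_{\alpha\beta},g_{\beta\gamma}$ with $\del\delbar g_{\alpha\beta}=\pm\alpha\wedge\beta$ are also invariant (they are polynomials in $\eta^i,\eta^{\bar j}$ with the same total character as $\alpha\wedge\beta$, so invariance is automatic). The non-triviality of $\langle\mathfrak a,\mathfrak b,\mathfrak c\rangle_{ABC}$ on $\hat M$ then follows from its non-triviality on $I_3$ together with injectivity of the pullback $\pi_{A}^*$ (dual to Theorem~\ref{thm:pullback_orb}, or directly: an invariant Aeppli-harmonic form that is zero in $H_A^{\bullet,\bullet}(\hat M)$ is exact by invariant forms on $I_3$, hence zero there) — one must check the indeterminacy subgroup does not swallow the class, which reduces to the same computation already done upstairs restricted to invariant forms.

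The second half is the passage from the orbifold $\hat M$ to an honest smooth compact complex manifold $\tilde M$. Here I would take an explicit resolution of singularities $\mu\colon\tilde M\to\hat M$ (the singular locus will be a disjoint union of sub-tori or points with known transverse $\mathbb{C}^k/G$ structure, resolvable by toric/crepant-type resolutions, e.g.\ blow-ups along the singular strata), and then argue two things. For the $\del\delbar$-lemma: it is a bimeromorphic invariant in the appropriate sense — more precisely, if $\hat M$ satisfies the $\del\delbar$-lemma (orbifold version) and $\mu$ is a modification, then $\tilde M$ satisfies the $\del\delbar$-lemma, using the blow-up formulas for Bott-Chern/Dolbeault cohomology (the cohomology of $\tilde M$ is that of $\hat M$ plus contributions from the exceptional divisors, which are projective bundles over the centers and hence $\del\delbar$; and the $\del\delbar$ property is stable under such sums — this is exactly the mechanism by which Fujiki class $\mathcal C$ manifolds satisfy the $\del\delbar$-lemma). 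For the Massey product: I would pull back $\mathfrak a,\mathfrak b,\mathfrak c$ via $\mu^*$ (injective on $H_{BC}$ and $H_A$ by Theorem~\ref{thm:pullback_orb} and its Aeppli analogue, since $\mu$ is proper surjective of the same dimension), note that the defining relations $\mu^*\alpha\wedge\mu^*\beta=\pm\del\delbar\,\mu^*g_{\alpha\beta}$ pull back verbatim, so $\langle\mu^*\mathfrak a,\mu^*\mathfrak b,\mu^*\mathfrak c\rangle_{ABC}=\mu^*\langle\mathfrak a,\mathfrak b,\mathfrak c\rangle_{ABC}$, and finally check this is nonzero in the quotient $H_A^{\bullet,\bullet}(\tilde M)/(\text{indeterminacy})$. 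The main obstacle I anticipate is precisely this last point: controlling the indeterminacy subgroup $\mu^*\mathfrak a\smile H_A^{\bullet,\bullet}(\tilde M)+\mu^*\mathfrak c\smile H_A^{\bullet,\bullet}(\tilde M)$ on the resolution, because $H_A^{\bullet,\bullet}(\tilde M)$ is strictly larger than $\mu^*H_A^{\bullet,\bullet}(\hat M)$ (it picks up the exceptional classes), so one must verify that products of $\mu^*\mathfrak a$ with the new exceptional Aeppli classes cannot hit $\mu^*\langle\mathfrak a,\mathfrak b,\mathfrak c\rangle_{ABC}$ — this requires knowing the ring structure on the exceptional part and choosing the centers of blow-up (equivalently, the original $G$-action) so that these products land in the ``wrong'' bidegree or are manifestly independent; arranging the fixed locus to have high codimension, or to be points, is the cleanest way to make the exceptional contributions harmless. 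Everything else is bookkeeping with the already-established orbifold Hodge theory of Section~\ref{sec:orb} and the explicit structure equations.
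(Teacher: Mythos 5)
Your plan is essentially the paper's construction: quotient the Iwasawa manifold by a finite cyclic group acting diagonally by roots of unity (the paper takes $\sigma(z_1,z_2,z_3)=(iz_1,iz_2,-z_3)$, i.e.\ $\sigma^*\phi^1=i\phi^1$, $\sigma^*\phi^2=i\phi^2$, $\sigma^*\phi^3=-\phi^3$), check that the invariant cohomology satisfies the Fr\"olicher equality and conjugation symmetry so the global-quotient orbifold $\hat M$ satisfies the $\del\delbar$-lemma, exhibit a non-vanishing $ABC$-Massey product on $\hat M$, resolve, and transport both properties to the resolution via Theorem \ref{thm:pullback_orb} and the result of \cite{ASTT} (your ``blow-up formula'' mechanism). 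Two remarks on where your sketch diverges from what actually works. First, on the choice of classes: any diagonal action that keeps $\phi^{12}$ (equivalently $\phi^3$) invariant destroys the conjugation symmetry $h^{1,0}=h^{0,1}$ of the quotient, since $\phi^3$ is $\delbar$-closed but $\phi^{\bar 3}$ is not; so a product modelled on the $(2,0)/(1,1)/(0,2)$ pattern of Section \ref{sec-Bott_Chern} is not available here, and one is forced to invariant $(1,1)$-classes. The paper uses $\langle[\phi^{1\bar1}],[\phi^{2\bar2}],[\phi^{2\bar2}]\rangle_{ABC}$, with $\phi^{1\bar1}\wedge\phi^{2\bar2}=\del\delbar\phi^{3\bar3}$, and on $\hat M$ the indeterminacy ideal is simply zero.

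Second, and this is the genuine gap you yourself flag: on the resolution $\tilde M$ you leave open how to rule out that $\pi^*$ of the product falls into the (much larger) indeterminacy ideal, and your proposed remedy (control the ring structure of the exceptional Aeppli classes, or arrange the centers to be points) is not how it gets done --- in fact the fixed locus of $\sigma^2$ is a union of curves, so the centers cannot all be points, and computing products against exceptional classes would be laborious. The paper closes this with a short trick that requires no knowledge of the exceptional part: if $[\pi^*\phi^{23\overline{23}}]_A=[\pi^*\phi^{1\bar1}]_{BC}\cup[F]_A+[\pi^*\phi^{2\bar2}]_{BC}\cup[G]_A$, cup both sides with $[\pi^*\phi^{1\bar1}]_{BC}$; the right-hand side becomes $[\del\delbar(\pi^*\phi^{3\bar3}\wedge G)]_A=0$ (using $\pi^*\phi^{1\bar1}\wedge\pi^*\phi^{1\bar1}=0$ and $\pi^*\phi^{1\bar1}\wedge\pi^*\phi^{2\bar2}=\del\delbar\pi^*\phi^{3\bar3}$), while the left-hand side is $[\pi^*\phi^{123\overline{123}}]_A\neq0$ by injectivity of the pull-back, a contradiction. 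Without this (or an equivalent) argument your proof is incomplete precisely at the step you identified as the main obstacle; with it, the rest of your outline matches the paper's proof.
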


We start by considering the complex $3$-dimensional Heisenberg group $G:=\mathbb{H}(3,\C)$, i.e., the nilpotent group of matrices
\[
G=\left\{
\begin{pmatrix}
1 & z_1 & z_3\\
0 & 1   & z_2\\
0 & 0 & 1\
\end{pmatrix}: z_1,z_2,z_3\in\C \right\}.
\]
As an open set of $GL(n;\C)$, $G$ has standard holomorphic coordinates $\{z_1,z_2,z_3\}$.

If we take the lattice $\Gamma=G\cap GL(3;\Z[i])$, the compact quotient $M=\Gamma\backslash G$ is a complex nilmanifold of complex dimension $3$, the \emph{Iwasawa manifold}.

The group $G$ admits a left invariant coframe of $(1,0)$-forms
\[
\phi^1=dz_1,\quad \phi^2=dz_2,\quad \phi^3=dz_3-z_1dz_2
\]
which gives rise to a left-invariant integrable almost complex structure $J$ on $G$.

We note that the coframe $\{\phi^1,\phi^2,\phi^3\}$, and therefore the complex structure $J$, descends on the quotient $M$. Since the structure equations on $(M,J)$ are
\begin{equation}\label{eq:struct_iwa}
d\phi^1=0,\quad d\phi^2=0,\quad, d\phi^3=-\phi^{12},
\end{equation}
the complex structure $J$ is holomorphically parallelizable on $M$. Therefore, by \cite[Theorem 2.8]{A13}, we know that de Rham cohomology, Dolbeault cohomology, Bott-Chern cohomology and Aeppli cohomology of $(M,J)$ are isomorphic to the corresponding cohomologies of the Lie algebra $\mathfrak{g}$ of $G$ endowed with the complex structure $J$.

We point out that the Iwasawa manifold does not satisfy the $\del\delbar$-lemma. In fact, it is not formal \cite{Has}.

We now construct an orbifold of global-quotient-type starting from $M$. We first define the following action $\sigma\colon \C^3\rightarrow \C^3$ by
\begin{equation}
\sigma(z_1,z_2,z_3)=(iz_1,iz_2,-z_3), \quad\text{for} \quad (z_1,z_2,z_3)\in\C^3.
\end{equation}
We observe that as a group of biholomorphisms $\langle\sigma\rangle$ has finite order, since $\sigma^4=\id_{\C^3}$.

We need the following.
\begin{lem}\label{lem:sigma_def}
The action $\sigma$ is well defined on $M$.
\begin{proof}
We begin by noting that $G$ can be identified with $(\C^3,\star)$, where the product $\star$ is given by
\begin{equation}\label{eq:lem_w_d_0}
(z_1,z_2,z_3)\star(w_1,w_2,w_3)=(z_1+w_1,z_2+w_2,z_3+z_1w_2+w_3)
\end{equation}
for every $(z_1,z_2,z_3),(w_1,w_2,w_3)\in\C^3$.

We then need to show that, for $[z], [z']\in M$, if $[z]=[z']$, then $[\sigma(z)]=[\sigma(z')]$, or,  equivalently, that if $z=(z_1,z_2,z_3)\sim z'=(z_1',z_2',z_3')$, then $\sigma(z)\sim\sigma(z')$.

The equivalence is given by the action of multiplication on the left by elements of $\Gamma$, which, through the identification $G\simeq(\C^3,\star)$ reads $z\sim z'$ if, and only if, there exists $\gamma=(\gamma_1,\gamma_2,\gamma_3)\in(\Z[i])^3$ such that $z'=\gamma\star z$, which accounts to
\begin{align}\label{eq:lem_w_d_1}
\begin{cases}
z_1'=z_1+\gamma_1\\
z_2'=z_2+\gamma_2\\
z_3'=z_3+\gamma_1z_2+\gamma_3.
\end{cases}
\end{align}
Let us then assume that $z\sim z'$. We point out that
\begin{align*}
\sigma(z')=(iz_1',iz_2',-z_3')
\end{align*}
and, by (\ref{eq:lem_w_d_1}),
\begin{align}\label{eq:lem_w_d_2}
\sigma(z')=(iz_1+i\gamma_1,iz_2+i\gamma_2,-z_3-\gamma_1 z_2-\gamma_3).
\end{align}
Now choose $\tilde{\gamma}=(\tilde{\gamma}_1,\tilde{\gamma}_2,\tilde{\gamma}_3):=(i\gamma_1,i\gamma_2,-\gamma_3)\in\Z[i]$. By definition (\ref{eq:lem_w_d_0}) of the product $\star$ and equation (\ref{eq:lem_w_d_2}), it is easy to check that
\[
\sigma(z')=\tilde{\gamma}\star \sigma(z).
\]
\end{proof}
\end{lem}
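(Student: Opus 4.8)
The plan is to establish that $\sigma$ descends to the quotient by showing that it is \emph{equivariant} with respect to the left action of $\Gamma$: since $M=\Gamma\backslash G$ is obtained by identifying $z$ with $\gamma\star z$ for $\gamma\in\Gamma$, the induced assignment $[z]\mapsto[\sigma(z)]$ is well defined precisely when $\sigma$ permutes the $\Gamma$-orbits, i.e.\ when for every $\gamma\in\Gamma$ and every $z\in G$ there exists $\tilde\gamma\in\Gamma$ with $\sigma(\gamma\star z)=\tilde\gamma\star\sigma(z)$. First I would record the Heisenberg group law coming from matrix multiplication in $\mathbb{H}(3,\C)$, namely $(z_1,z_2,z_3)\star(w_1,w_2,w_3)=(z_1+w_1,\,z_2+w_2,\,z_3+z_1w_2+w_3)$, and recall that under the identification $G\simeq(\C^3,\star)$ the lattice is $\Gamma\simeq(\Z[i])^3$.

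Next I would write the orbit relation $z'=\gamma\star z$ explicitly in coordinates as
\[
z_1'=z_1+\gamma_1,\qquad z_2'=z_2+\gamma_2,\qquad z_3'=z_3+\gamma_1 z_2+\gamma_3,
\]
apply $\sigma(z')=(iz_1',\,iz_2',\,-z_3')$, and substitute. Guided by the linear scalings $(i,i,-1)$ defining $\sigma$, the natural candidate for the transported lattice element is $\tilde\gamma:=(i\gamma_1,\,i\gamma_2,\,-\gamma_3)$, which indeed lies in $\Gamma$ because $\Z[i]$ is stable under multiplication by $i$ and under negation. It then remains to verify the single identity $\sigma(\gamma\star z)=\tilde\gamma\star\sigma(z)$ by a direct substitution into the group law.

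The only place where anything can go wrong is the nonabelian cross term $z_1w_2$ in the third coordinate, so this is where I would focus. Computing $\tilde\gamma\star\sigma(z)$ produces in its third slot the term $\tilde\gamma_1\cdot(\sigma z)_2=(i\gamma_1)(iz_2)=-\gamma_1 z_2$, and the argument succeeds exactly because this equals the $-\gamma_1 z_2$ that appears when $\sigma$ multiplies $z_3'=z_3+\gamma_1 z_2+\gamma_3$ by $-1$. In other words the compatibility rests on the numerical coincidence $i\cdot i=-1$: the scaling factors $(i,i)$ on the first two coordinates conspire with the factor $-1$ on the third so that $\sigma$ respects the Heisenberg twist. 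I expect no genuine obstacle here --- once the candidate $\tilde\gamma$ is written down the verification is a one-line computation --- and the real content of the lemma is simply that these three scaling factors have been chosen consistently with the group law.
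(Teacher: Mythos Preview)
Your proposal is correct and follows essentially the same route as the paper: identify $G$ with $(\C^3,\star)$, write out $z'=\gamma\star z$ in coordinates, apply $\sigma$, and verify that the choice $\tilde\gamma=(i\gamma_1,i\gamma_2,-\gamma_3)\in(\Z[i])^3$ gives $\sigma(z')=\tilde\gamma\star\sigma(z)$. Your additional remark that the third-coordinate check hinges on $i\cdot i=-1$ is a nice clarification of why the scaling factors are compatible with the Heisenberg twist, but otherwise the argument is the same.
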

As a consequence of Lemma \ref{lem:sigma_def}, we can define an action of $\sigma$ on $M$, given by $\sigma([z]):=[\sigma(z)]$, for every $[z]\in M$.

Let us now consider the quotient $M/\langle\sigma\rangle$. It is not a smooth manifold, as follows from the following lemma.
\begin{lem}\label{lem:fixed}
The action $\sigma$ on $M$ has 16 fixed points.
\begin{proof}
We need to find all the solution to the following equation
\begin{equation}\label{eq:fixed_00}
\sigma[z]=[z], \quad \text{for}\quad z=(z_1,z_2,z_3)\in\C^3 ,
\end{equation}
or, equivalently, to $\sigma(z)\sim z$, i.e., finding all the distinct solutions (up to equivalence) to
\begin{align}\label{eq:fixed_0}
\begin{cases}
iz_1=z_1+\gamma_1\\
iz_2=z_2+\gamma_2\\
-z_3=z_3+\gamma_1z_2+z_3,
\end{cases}
\end{align}
for $\gamma=(\gamma_1,\gamma_2,\gamma_3)\in(\Z[i])^3.$ Now, by writing $z_j=x_j+iy_j$ and $\gamma_j=m_j+ik_j$, the system (\ref{eq:fixed_0}) yields the following solutions
\begin{align}\label{eq:fixed_1}
\begin{cases}
z_1=\frac{1}{2}(-m_1+k_1+i(-m_1-k_1))\\
z_2=\frac{1}{2}(-m_2+k_2+i(-m_2-k_2))\\
z_3=\frac{1}{4}(m_1m_2-k_1k_2-m_1k_2-k_1m_2-2m_3+i(m_1m_2-k_1k_2+m_1k_2+k_1m_2-2k_3)).
\end{cases}
\end{align}
We observe that two points in $z=(z_1,z_2,z_3),z'=(z_1',z_2',z_3')\in\C^3$ satisfying (\ref{eq:fixed_1}) are equivalent in $(\Z[i])\backslash\C^3$ if, and only if, there exists $\lambda=(\lambda_1,\lambda_2,\lambda_3)\in(\Z[i])^3$ such that $z'=\lambda\star z$, i.e., 
\begin{align}\label{eq:fixed_2}
\begin{cases}
z_1'=z_1+\lambda_1\\
z_2'=z_2+\lambda_2\\
z_3'=z_3+\lambda_1z_2+\lambda_3.
\end{cases}
\end{align}
We look at the first equation. By writing each $\lambda_j=a_j+ib_j$ and using (\ref{eq:fixed_1}), we have that $z_1'-z_1=\lambda_1$ if, and only if,
\begin{align*}
&\frac{1}{2}(-m_1'+k_1'-(-m_1+k_1))=a_1\\
&\frac{1}{2}(-m_1'-k_1'-(m_1-k_1))=b_1.
\end{align*}
We notice that $[-m_1-k_1]=[-m_1+k_1]\in\frac{\Z}{2\Z}$. Therefore $z_1'-z_1=\lambda_1$ if and only if $[-m_1+k_1]=[-m_1'+k_1']\in\frac{\Z}{2\Z}$. By choosing as representatives for $\gamma_1$ as either $0$ or $1$, we obtain  that the distinct values of $z_1$, up to equivalence, are either $0$ or $\frac{1}{2}+\frac{i}{2}$. Analogously, this can be done for $z_2$, whose distinct values, up to equivalence, are $0$ or $\frac{1}{2}+\frac{i}{2}$. By plugging those values in the third equation of (\ref{eq:fixed_1}), we get that, in the case where $(z_1,z_2),(z_1',z_2')\neq(\frac{1}{2}+\frac{i}{2},\frac{1}{2}+\frac{i}{2})$, the third components of $z$ and $z'$ are, respectively, $z_3=-\frac{1}{2}m_3-\frac{i}{2}k_3$ and  $z_3'=-\frac{1}{2}m_3'-\frac{i}{2}k_3'$. Then, equation $z_3'-z_3=\lambda_1z_2+\lambda_3$ is satisfied  if and only if
\begin{align*}
&\frac{1}{2}(m_3-m_3')=a_3\\
&\frac{1}{2}(k_3-k_3')=b_3.
\end{align*}
Hence, by choosing $\gamma_3\in\{0,1,i,1+i\}$, we get that the only solutions, up to equivalence, are $z_3\in\{0,\frac{1}{2},\frac{i}{2},\frac{1}{2}+\frac{i}{2}\}$.

Finally, when $z_1=z_1'=z_2=z_2'=\frac{1}{2}+\frac{i}{2}$, we have expression for $z_3=\frac{1}{4}(1-2m_3+i(1-2k_3))$ and $z_3'=\frac{1}{4}(1-2m_3'+i(1-2k_3'))$ Therefore, $z_3'-z_3=\lambda_1z_2+\lambda_3$ holds if, and only if,
\begin{align*}
&\frac{1}{2}(m_3-m_3')=a_1+a_3\\
&\frac{1}{2}(k_3-k_3')=b_1+b_3.
\end{align*}
Thus, if one chooses $\gamma_3\in\{0,1,i,1+i\}$, one gets that the solutions, up to equivalence, are $z_3\in\{0,\frac{1}{2},\frac{i}{2},\frac{1}{2}+\frac{i}{2}\}$. By counting all the distinct solutions up to equivalence $z=(z_1,z_2,z_3)$ satisfying (\ref{eq:fixed_00}), i.e., the fixed point of $\sigma$ on $M$, we find that they are $16$ and, clearly, isolated.
\end{proof}
\end{lem}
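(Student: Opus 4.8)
The plan is to lift the fixed-point condition to the universal cover $\C^3$ and count its solutions modulo the lattice $\Gamma$. A class $[z]\in M$ is fixed by $\sigma$ precisely when $\sigma(z)\sim z$, i.e. when there is $\gamma=(\gamma_1,\gamma_2,\gamma_3)\in(\Z[i])^3$ with $\sigma(z)=\gamma\star z$. Using the group law \eqref{eq:lem_w_d_0}, this is the system
\begin{equation*}
iz_1=z_1+\gamma_1,\qquad iz_2=z_2+\gamma_2,\qquad -z_3=z_3+\gamma_1 z_2+\gamma_3 .
\end{equation*}
The first thing I would record is that the projection $(z_1,z_2,z_3)\mapsto(z_1,z_2)$ is the abelianization homomorphism $G\to\C^2$ (its kernel is the center $\{(0,0,z_3)\}$), that it carries $\Gamma$ onto $(\Z[i])^2$, and hence induces a well-defined fibration $M\to(\C/\Z[i])^2$ over a complex two-torus. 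This map is $\sigma$-equivariant, with $\sigma$ acting on the base simply as multiplication by $i$ on each factor. Consequently the first two equations decouple from the third, and every fixed point of $\sigma$ must lie over a fixed point of the base map.

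Second, I would count fixed points of multiplication by $i$ on a single factor $\C/\Z[i]$. The condition $iz\equiv z$ means $(i-1)z\in\Z[i]$, i.e. $z\in\tfrac{1}{i-1}\Z[i]$, so the number of solutions modulo $\Z[i]$ is the index $[\tfrac{1}{i-1}\Z[i]:\Z[i]]=N(i-1)=2$, where $N$ is the norm on $\Z[i]$; concretely these classes are $z\equiv 0$ and $z\equiv\tfrac12+\tfrac i2$. Therefore exactly $2\times 2=4$ admissible classes of pairs $(z_1,z_2)$ occur.

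Third, I would count the fibre over each such pair. Fixing representatives $z_1,z_2$ in $\{0,\tfrac12+\tfrac i2\}$ and setting $\gamma_1=(i-1)z_1\in\Z[i]$, the third equation becomes $-2z_3=\gamma_3+(i-1)z_1z_2$ with $\gamma_3$ ranging over $\Z[i]$, so the admissible $z_3$ form the coset $-\tfrac12(i-1)z_1z_2+\tfrac12\Z[i]$. The key observation is that any $\lambda=(\lambda_1,\lambda_2,\lambda_3)\in\Gamma$ preserving the chosen representatives $z_1,z_2$ must have $\lambda_1=\lambda_2=0$, so the residual identification on the fibre is just $z_3\sim z_3+\lambda_3$, $\lambda_3\in\Z[i]$. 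Hence the number of classes of $z_3$ equals the order of the image of $\tfrac12\Z[i]$ in $\C/\Z[i]$, namely $[\tfrac12\Z[i]:\Z[i]]=N(2)=4$, and this is \emph{independent} of the pair $(z_1,z_2)$. Multiplying, the fixed locus consists of $4\times 4=16$ points; being finite (equivalently, since the linearization $\operatorname{diag}(i,i,-1)$ of $\sigma$ at a fixed point has no eigenvalue $1$) these are isolated.

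The main obstacle is the bookkeeping in the third coordinate: because the Heisenberg law twists $z_3$ by the term $\gamma_1z_2$, I must check that this twist only \emph{translates} the coset of admissible $z_3$ and never alters its cardinality, and that no nontrivial $\lambda_1$ secretly identifies two of these $z_3$ with each other (which would force a change of $z_1$-representative and so is excluded by the reduction $\lambda_1=\lambda_2=0$). Verifying that the count $4$ is genuinely uniform over all four pairs $(z_1,z_2)$ — including the exceptional pair $(\tfrac12+\tfrac i2,\tfrac12+\tfrac i2)$, for which the constant $(i-1)z_1z_2$ must be computed explicitly — is the one point that requires care, but it reduces to the observation that the coset offset is irrelevant to the order of $\tfrac12\Z[i]/\Z[i]$.
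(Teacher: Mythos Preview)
Your proof is correct and follows the same overall decomposition as the paper: reduce to the first two coordinates via the projection to the base torus, count the fixed points there, and then count the admissible $z_3$ in each fiber, arriving at $2\times 2\times 4=16$. The difference is in presentation rather than substance. The paper carries out the count by writing $z_j=x_j+iy_j$ and $\gamma_j=m_j+ik_j$, solving the resulting real linear systems explicitly, and then checking case by case (including the pair $(\tfrac12+\tfrac i2,\tfrac12+\tfrac i2)$ separately) which $\gamma_3$ yield inequivalent $z_3$. Your argument replaces this with the index computations $[\tfrac{1}{i-1}\Z[i]:\Z[i]]=N(i-1)=2$ and $[\tfrac12\Z[i]:\Z[i]]=N(2)=4$, together with the observation that the Heisenberg twist $\gamma_1 z_2$ only shifts the coset of admissible $z_3$ and hence leaves its cardinality unchanged. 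This is cleaner and makes transparent why the $z_3$-count is uniform over all four base points, a point the paper verifies by direct inspection; conversely, the paper's explicit coordinates have the advantage of producing the actual list of fixed points, which is used later in the resolution construction.
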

As consequence of Lemma \ref{lem:fixed}, we obtain that $\hat{M}:=M/\langle\sigma\rangle$ is an orbifold of global-quotient-type.
Since 
\begin{equation*}
\sigma^*\phi^1=i\phi^1,\quad \sigma^*\phi^2=i\phi^2,\quad \sigma^*\phi^3=-\phi^3,
\end{equation*}
the complex of $\sigma$-invariant differential forms on $M$ is
\begin{equation*}
\textstyle\bigwedge^{\bullet,\bullet}\hat{M}=\Span_{\C}\left\langle1,\phi^{1\overline{1}},\phi^{1\overline{2}},\phi^{2\overline{1}},\phi^{2\overline{2}},\phi^{123},\phi^{12\overline{3}},\phi^{3\overline{12}},\phi^{\overline{123}},\phi^{12\overline{12}},\phi^{13\overline{13}},\phi^{13\overline{23}},\phi^{23\overline{13}},\phi^{23\overline{23}},\phi^{123\overline{123}}\right\rangle.
\end{equation*}
Let us fix $g$ the Hermitian metric on $\hat{M}$ with fundamental associated form $\omega=\frac{i}{2}(\phi^{1\overline{1}}+\phi^{2\overline{2}}+\phi^{3\overline{3}})$. We can now compute the cohomologies of $\hat{M}$ by definitions (\ref{eq:coom_orb}) and (\ref{eq:coom_orb1}) and via Theorems \ref{thm:coom_orb1} and \ref{thm:coom_orb2}. In particular, we prove the following.
\begin{lem}\label{lem:deldelbar_orb}
$\hat{M}$ satisfies the $\del\delbar$-lemma.
\begin{proof}
It suffices to the show that Fr\"olicher equality (\ref{eq:fr-orb}) holds and also $H_{\delbar}^{p,q}(\hat{M})\simeq H_{\delbar}^{q,p}(\hat{M})$ via complex conjugation. By easy computations of the harmonic representatives with respect to $g$, we see that the non-trivial de Rham cohomology spaces of $\hat{M}$ are
\begin{align*}
&H_{dR}^0(\hat{M};\C)=\Span_{\C}\langle 1\rangle\\
&H_{dR}^2(\hat{M};\C)=\Span_{\C}\langle\phi^{1\overline{1}},\phi^{1\overline{2}},\phi^{2\overline{1}},\phi^{2\overline{2}}\rangle\\
&H_{dR}^3(\hat{M};\C)=\Span_{\C}\langle\phi^{123},\phi^{\overline{123}}\rangle\\
&H_{dR}^4(\hat{M};\C)=\Span_{\C}\langle\phi^{13\overline{13}},\phi^{13\overline{23}},\phi^{23\overline{13}},\phi^{23\overline{23}}\rangle\\
&H_{dR}^6(\hat{M};\C)=\Span_{\C}\langle\phi^{123\overline{123}}\rangle,
\end{align*}
whereas  the non-trivial Dolbeault cohomology spaces of $\hat{M}$ are
\begin{align*}
&H_{\delbar}^{0,0}(\hat{M})=\Span_{\C}\langle 1\rangle\\
&H_{\delbar}^{1,1}(\hat{M})=\Span_{\C}\langle\phi^{1\overline{1}},\phi^{1\overline{2}},\phi^{2\overline{1}},\phi^{2\overline{2}}\rangle\\
&H_{\delbar}^{3,0}(\hat{M})=\Span_{\C}\langle\phi^{123}\rangle\\
&H_{\delbar}^{0,3}(\hat{M})=\Span_{\C}\langle\phi^{\overline{123}}\rangle\\
&H_{\delbar}^{2,2}(\hat{M})=\Span_{\C}\langle\phi^{13\overline{13}},\phi^{13\overline{23}},\phi^{23\overline{13}},\phi^{23\overline{23}}\rangle\\
&H_{\delbar}^{3,3}(\hat{M})=\Span_{\C}\langle\phi^{123\overline{123}}\rangle.
\end{align*}
By comparing the former and the latter spaces, we easily conclude the proof.
\end{proof}
\end{lem}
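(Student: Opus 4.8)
The plan is to reduce the whole question to finite-dimensional linear algebra on the $\sigma$-invariant complex and then to invoke the cohomological characterization of the $\del\delbar$-lemma for complex orbifolds recalled after \eqref{eq:fr-orb}: it is equivalent to equality holding in the Frölicher inequality \eqref{eq:fr-orb} together with the conjugation symmetry $\overline{H_{\delbar}^{p,q}(\hat{M})}\simeq H_{\delbar}^{q,p}(\hat{M})$. First I would note that the fixed metric $g$, with $\omega=\frac{i}{2}(\phi^{1\overline{1}}+\phi^{2\overline{2}}+\phi^{3\overline{3}})$, is $\sigma$-invariant: from $\sigma^*\phi^1=i\phi^1$, $\sigma^*\phi^2=i\phi^2$, $\sigma^*\phi^3=-\phi^3$ each summand $\phi^{j\overline{j}}$ is fixed, so every Laplacian commutes with $\sigma^*$. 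Hence, by Theorems \ref{thm:coom_orb1} and \ref{thm:coom_orb2}, each cohomology of $\hat{M}$ is the $\langle\sigma\rangle$-invariant part of the corresponding cohomology of $M$; and since $(M,J)$ is holomorphically parallelizable these cohomologies are computed at the invariant level (\cite{A13}). Thus the computation takes place on the bigraded complex of $\sigma$-invariant invariant forms, equipped with the differentials induced by \eqref{eq:struct_iwa}.

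Next I would record $\del$ and $\delbar$ on the generators: $\phi^1,\phi^2,\phi^{\overline{1}},\phi^{\overline{2}}$ are $d$-closed, while $\del\phi^3=-\phi^{12}$ and $\delbar\phi^{\overline{3}}=-\phi^{\overline{12}}$ (with $\delbar\phi^3=0=\del\phi^{\overline{3}}$), and then extend by the Leibniz rule to every $\sigma$-invariant monomial. Computing kernels and images bidegree by bidegree yields the Dolbeault groups of $\hat{M}$: the nonzero ones are $H_{\delbar}^{0,0}$, $H_{\delbar}^{1,1}$, $H_{\delbar}^{3,0}$, $H_{\delbar}^{0,3}$, $H_{\delbar}^{2,2}$, $H_{\delbar}^{3,3}$, of dimensions $1,4,1,1,4,1$. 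The same bookkeeping for $d$ gives $H_{dR}^\bullet(\hat{M};\C)$ concentrated in degrees $0,2,3,4,6$ with dimensions $1,4,2,4,1$.

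With both tables in hand I would finish by verifying the two conditions. Summing the Dolbeault dimensions along each anti-diagonal gives $1,4,2,4,1$ in total degrees $0,2,3,4,6$, matching $\dim_{\C} H_{dR}^k(\hat{M};\C)$ exactly, so equality holds in \eqref{eq:fr-orb}. The conjugation symmetry is then immediate from the list, since $H_{\delbar}^{1,1},H_{\delbar}^{2,2},H_{\delbar}^{3,3}$ are self-conjugate and complex conjugation interchanges $H_{\delbar}^{3,0}$ with $H_{\delbar}^{0,3}$ (indeed $\overline{\phi^{123}}=\phi^{\overline{123}}$). By the characterization from \cite{DGMS}, these two facts give the $\del\delbar$-lemma for $\hat{M}$.

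The only real subtlety is the middle step, namely seeing why the obstruction present on the Iwasawa manifold itself (which fails the $\del\delbar$-lemma) disappears on $\hat{M}$. The dangerous bidegrees are $(2,1)$ and $(1,2)$: there one finds that $\phi^{12\overline{3}}$ fails to be $\delbar$-closed, since $\delbar\phi^{12\overline{3}}=-\phi^{12\overline{12}}\neq0$, while $\phi^{3\overline{12}}$, although $\delbar$-closed, is $\delbar$-exact, because the invariant $(1,1)$-form $\phi^{3\overline{3}}$ satisfies $\delbar\phi^{3\overline{3}}=\phi^{3\overline{12}}$. Consequently $H_{\delbar}^{2,1}(\hat{M})=H_{\delbar}^{1,2}(\hat{M})=0$ and $\phi^{12\overline{12}}$ becomes exact in $H_{\delbar}^{2,2}(\hat{M})$, which is exactly what forces the Frölicher equality; the analogous mechanism on the de Rham side is the $d$-exactness of $\phi^{12\overline{3}}-\phi^{3\overline{12}}$ via $d\phi^{3\overline{3}}$. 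Tracking these cancellations, and the attendant signs, over all invariant generators is the main source of potential error, but no conceptual difficulty remains once the invariant-form reduction is justified.
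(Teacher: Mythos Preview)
Your proposal is correct and follows essentially the same strategy as the paper: reduce to the finite $\sigma$-invariant left-invariant complex, compute the de Rham and Dolbeault cohomologies there, and conclude via the characterization of the $\del\delbar$-lemma by Fr\"olicher equality plus conjugation symmetry. Your version is in fact somewhat more explicit than the paper's---you justify the reduction to invariants via the $\sigma$-invariance of $g$ and \cite{A13}, and you pinpoint the role of the invariant form $\phi^{3\overline{3}}$ in killing $H_{\delbar}^{1,2}$ and forcing the degeneration---whereas the paper simply records the resulting cohomology tables and compares them.
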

As a consequence, Bott-Chern and Aeppli cohomologies of $\hat{M}$ are immediately determined by $H_{BC}^{p,q}(\hat{M})=H_{\delbar}^{p,q}(\hat{M})$ and $H_A^{p,q}(\hat{M})\simeq\ast( H_{BC}^{3-p,3-q}(\hat{M}))$, yielding
\begin{align*}
&H_{BC}^{0,0}(\hat{M})=\Span_{\C}\langle 1\rangle & &H_A^{0,0}(\hat{M})=\Span_{\C}\langle 1\rangle\\
&H_{BC}^{1,1}(\hat{M})=\Span_{\C}\langle\phi^{1\overline{1}},\phi^{1\overline{2}},\phi^{2\overline{1}},\phi^{2\overline{2}}\rangle & &H_A^{1,1}(\hat{M})=\Span_{\C}\langle\phi^{1\overline{1}},\phi^{1\overline{2}},\phi^{2\overline{1}},\phi^{2\overline{2}}\rangle\\
&H_{BC}^{3,0}(\hat{M})=\Span_{\C}\langle\phi^{123}\rangle &&H_A^{3,0}(\hat{M})=\Span_{\C}\langle\phi^{123}\rangle\\
&H_{BC}^{0,3}(\hat{M})=\Span_{\C}\langle\phi^{\overline{123}}\rangle & &H_A^{0,3}(\hat{M})=\Span_{\C}\langle\phi^{\overline{123}}\rangle\\
&H_{BC}^{2,2}(\hat{M})=\Span_{\C}\langle\phi^{13\overline{13}},\phi^{13\overline{23}},\phi^{23\overline{13}},\phi^{23\overline{23}}\rangle & &H_A^{2,2}(\hat{M})=\Span_{\C}\langle\phi^{13\overline{13}},\phi^{13\overline{23}},\phi^{23\overline{13}},\phi^{23\overline{23}}\rangle\\
&H_{BC}^{3,3}(\hat{M})=\Span_{\C}\langle\phi^{123\overline{123}}\rangle & & H_A^{3,3}(\hat{M})=\Span_{\C}\langle\phi^{123\overline{123}}\rangle.
\end{align*}
We now define an $ABC$-Massey triple product on $\hat{M}$.
\begin{lem}\label{lem:ABC_prod_orb}
$\hat{M}$ admits a non vanishing $ABC$-Massey triple product.
\begin{proof}
Let us consider the following Bott-Chern cohomology classes
\[
[\alpha]:=[\phi^{1\overline{1}}]\in H_{BC}^{1,1}(\hat{M}),\quad [\beta]:=[\phi^{2\overline{2}}]\in H_{BC}^{1,1}(\hat{M}),\quad [\gamma]:=[\phi^{2\overline{2}}]\in H_{BC}^{1,1}(\hat{M}).
\]
We notice that, by structure equations (\ref{eq:struct_iwa}), we have that $\phi^{1\overline{1}}\wedge\phi^{2\overline{2}}=\del\delbar\phi^{3\overline{3}}$. Then, it is well-defined
\[
\langle[\alpha],[\beta],[\gamma]\rangle_{ABC}\in\frac{H_A^{2,2}(\hat{M})}{[\phi^{1\overline{1}}]_{BC}\cup H_A^{1,1}(\hat{M})+[\phi^{2\overline{2}}]_{BC}\cup H_A^{1,1}(\hat{M})},
\]
which, by Definition \ref{def:ABCM}, is represented by the non zero Aeppli cohomology class $[\phi^{23\overline{23}}]\in H_A^{2,2}(\hat{M})$. By the previous description of Aeppli cohomology, we note that the ideal $[\phi^{1\overline{1}}]_{BC}\cup H_A^{1,1}(\hat{M})+[\phi^{2\overline{2}}]_{BC}\cup H_A^{1,1}(\hat{M})$ is actually trivial in $H_A^{2,2}(\hat{M})$.

Hence, $\langle[\phi^{1\overline{1}}],[\phi^{2\overline{2}}],[\phi^{2\overline{2}}]\rangle_{ABC}$ is a non-vanishing $ABC$-Massey triple product on $\hat{M}$.
\end{proof}
\end{lem}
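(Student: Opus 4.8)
The aim is to exhibit the required smooth manifold as a resolution of the orbifold $\hat M=M/\langle\sigma\rangle$ constructed above, transporting to it both the $\del\delbar$-lemma of Lemma~\ref{lem:deldelbar_orb} and the non-vanishing $ABC$-Massey product of Lemma~\ref{lem:ABC_prod_orb}. First I would analyse the singular locus. By Lemma~\ref{lem:fixed} the singularities of $\hat M$ are the $16$ isolated fixed points of $\sigma$, and near each of them $\sigma$ linearises to $\mathrm{diag}(i,i,-1)$; thus every singularity is an isolated cyclic quotient singularity of type $\tfrac14(1,1,2)$. Such singularities are toric (and Gorenstein, since $1+1+2\equiv 0 \bmod 4$), hence admit an explicit resolution by a subdivision of the associated fan, i.e.\ by a finite sequence of blow-ups along smooth centres. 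Performing this resolution simultaneously at the $16$ points produces a proper surjective modification $\pi\colon \tilde M\to\hat M$ whose source $\tilde M$ is a smooth compact complex $3$-fold and whose exceptional locus $E=\pi^{-1}(\mathrm{Sing}\,\hat M)$ is a disjoint union of projective toric varieties. This $\tilde M$ is the candidate witness for the theorem.

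Next I would verify that $\tilde M$ satisfies the $\del\delbar$-lemma. The tool is a blow-up (modification) formula for Bott-Chern and Aeppli cohomology, which decomposes $H_{\square}^{\bullet,\bullet}(\tilde M)$, for $\square\in\{\delbar,BC,A\}$, as the pullback $\pi^*H_{\square}^{\bullet,\bullet}(\hat M)$ together with a complementary summand assembled from the cohomology of the exceptional components of $E$. Since $\hat M$ is $\del\delbar$ by Lemma~\ref{lem:deldelbar_orb} and each component of $E$ is projective, hence K\"ahler and therefore $\del\delbar$, both summands satisfy the $\del\delbar$-lemma; equivalently, the Angella-Tomassini equality $\sum_{p+q=k}\bigl(\dim_{\C}H_{BC}^{p,q}(\tilde M)+\dim_{\C}H_A^{p,q}(\tilde M)\bigr)=2\dim_{\C}H_{dR}^{k}(\tilde M;\C)$ is inherited from $\hat M$ and $E$. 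Here I rely on the fact, standard in the literature on modifications, that the $\del\delbar$-lemma passes to a resolution of a $\del\delbar$-space whose exceptional fibres are K\"ahler; I would make this explicit by listing the added exceptional classes and checking that they enter $H_{BC}$ and $H_A$ symmetrically.

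Finally I would transport the $ABC$-Massey product. The Bott-Chern classes $[\phi^{1\overline1}],[\phi^{2\overline2}]\in H_{BC}^{1,1}(\hat M)$ and the Aeppli witness $[\phi^{23\overline{23}}]\in H_A^{2,2}(\hat M)$ of Lemma~\ref{lem:ABC_prod_orb} pull back under $\pi$. As $\pi$ is a proper surjective morphism of orbifolds, Theorem~\ref{thm:pullback_orb} gives injectivity of $\pi^*_{BC}$, and the corresponding injectivity on Aeppli cohomology follows by Hodge-$\ast$ duality with Bott-Chern (both $\hat M$ and $\tilde M$ satisfying the $\del\delbar$-lemma). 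Naturality of the product of Definition~\ref{def:ABCM} under modifications then identifies $\pi^*\langle[\phi^{1\overline1}],[\phi^{2\overline2}],[\phi^{2\overline2}]\rangle_{ABC}$ with a representative of $\langle\pi^*[\phi^{1\overline1}],\pi^*[\phi^{2\overline2}],\pi^*[\phi^{2\overline2}]\rangle_{ABC}$ on $\tilde M$, and injectivity makes the pulled-back representative $\pi^*[\phi^{23\overline{23}}]$ non-zero in $H_A^{2,2}(\tilde M)$. Exhibiting this non-vanishing product on the smooth $\del\delbar$-manifold $\tilde M$ is exactly the assertion of the theorem.

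The main obstacle is the very last point: the product must remain non-vanishing \emph{modulo indeterminacy}, that is, $\pi^*[\phi^{23\overline{23}}]$ must avoid the ideal $\pi^*[\phi^{1\overline1}]_{BC}\cup H_A^{1,1}(\tilde M)+\pi^*[\phi^{2\overline2}]_{BC}\cup H_A^{1,1}(\tilde M)$, which on the resolution is enlarged by new Aeppli classes supported on $E$. Using the blow-up decomposition of $H_A^{1,1}(\tilde M)$ and $H_A^{2,2}(\tilde M)$, I would show that $\pi^*[\phi^{23\overline{23}}]$ sits in the $\pi^*H_A^{2,2}(\hat M)$-summand, while the cup products of $\pi^*[\phi^{1\overline1}]_{BC},\pi^*[\phi^{2\overline2}]_{BC}$ with the exceptional $(1,1)$-classes land in the complementary exceptional summand (the relevant forms restrict trivially to the fibres of $E$). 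Consequently the indeterminacy meets the $\pi^*H_A^{2,2}(\hat M)$-summand only in $\pi^*$ of the orbifold indeterminacy, which is already trivial by Lemma~\ref{lem:ABC_prod_orb}; this forces the class to survive and completes the construction of the required compact complex manifold.
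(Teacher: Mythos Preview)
Your proposal does not address the stated lemma. The lemma asserts that the \emph{orbifold} $\hat M$ admits a non-vanishing $ABC$-Massey triple product, and its proof is a direct three-line computation on $\hat M$: pick $[\phi^{1\bar1}],[\phi^{2\bar2}],[\phi^{2\bar2}]$, observe $\phi^{1\bar1}\wedge\phi^{2\bar2}=\del\delbar\phi^{3\bar3}$, and read off from the explicit tables that the representative $[\phi^{23\bar{23}}]_A$ is nonzero while the indeterminacy subspace $[\phi^{1\bar1}]\cup H_A^{1,1}(\hat M)+[\phi^{2\bar2}]\cup H_A^{1,1}(\hat M)$ is trivial in $H_A^{2,2}(\hat M)$. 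You instead sketch a proof of Theorem~\ref{thm:ABC_deldelbar}, citing the present lemma as an already-established ingredient. If the target was indeed this lemma, your attempt is simply off-topic.

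If you meant Theorem~\ref{thm:ABC_deldelbar}, your outline follows the paper's overall strategy (resolve $\hat M$, then transport both properties), with two differences worth flagging. For the $\del\delbar$-lemma on $\tilde M$ you invoke a blow-up formula and the Angella--Tomassini equality; the paper simply quotes \cite[Theorem~25]{ASTT}, which packages exactly that statement (a resolution of a $\del\delbar$-orbifold along K\"ahler centres is again $\del\delbar$). More significantly, for the survival of the Massey product modulo the enlarged indeterminacy on $\tilde M$ you propose to split $H_A^{\bullet,\bullet}(\tilde M)$ into base and exceptional summands and argue that cup products of pulled-back classes with exceptional $(1,1)$-classes stay exceptional. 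This is plausible but needs genuine control of the ring structure across the splitting, which you only assert. The paper bypasses this entirely with a one-line trick: assuming $[\pi^*\phi^{23\bar{23}}]_A=\pi^*[\phi^{1\bar1}]\cup[F]+\pi^*[\phi^{2\bar2}]\cup[G]$ for some $[F],[G]\in H_A^{1,1}(\tilde M)$, cup both sides with $\pi^*[\phi^{1\bar1}]_{BC}$; the left becomes the nonzero top class $[\pi^*\phi^{123\bar{123}}]_A$, while the right vanishes because $\phi^{1\bar1}\wedge\phi^{1\bar1}=0$ and $\phi^{1\bar1}\wedge\phi^{2\bar2}$ is $\del\delbar$-exact. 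This argument requires no information whatsoever about $H_A^{1,1}(\tilde M)$, hence no analysis of the exceptional locus, and is considerably cleaner than your proposed route.
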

\begin{proof}[Proof of Theorem \ref{thm:ABC_deldelbar}]
(I) In view of Hironaka singularities resolutions theorem, see \cite{Hir}, it turns out that $\hat{M}$ admits a resolution. \newline
We will construct an explicit smooth resolution $\hat{M}$, proceedings as follows, see \cite{CG16}.
Define $\psi=\sigma^2$, i.e.,
\[
\psi(z_1,z_2,z_3)=(-z_1,-z_2,z_3)
\]
for every $(z_1,z_2,z_3)\in\C^3$. Clearly, $\psi$ descends to $M$  and has order 2 on $M$, i.e., since $\psi^2=\id_M$. The locus of fixed points by the action of $\psi$ on $M$ is the disjoint union of $8$ curves on $M$ given by
\begin{align*}
&\mathcal{C}_i=\{[z_1^0,z_2^0,z_3]:z_3\in\C\},
\end{align*}
with $(z_1^0,z_2^0)\in\{(0,\frac{1}{2}),(0,\frac{i}{2}),(0,\frac{1}{2}+\frac{i}{2}),(\frac{1}{2},0),(\frac{i}{2},0),(\frac{1}{2}+\frac{i}{2},0),(\frac{1}{2}+\frac{i}{2},\frac{1}{2}+\frac{i}{2})\}$.

Let us set $\mathcal{C}:=\mathcal{C}_1=\{[0,0,z_3]\}$. In a neighborhood  $U$ of $\mathcal{C}$ and local coordinates $(z_1,z_2,z_3)$, we write, locally, $\mathcal{C}=\{(z_1=0,z_2=0,z_3)\}$. We perform the blowup of $M$ along $\mathcal{C}$ by taking the set
\[
\tilde{U}=\{((z_1,z_2,z_3),[l_1:l_2]):z_1l_2-z_2l_1=0\}\subset U\times \mathbb{P}^2.
\]
Through the resulting the map $p\colon \mathit{Bl}_{\mathcal{C}}M\rightarrow M$, if $E:=p^{-1}(\mathcal{C})\simeq \mathbb{P}(\mathcal{N}_{\mathcal{C}/M})$ is the exceptional divisor, $\tilde{U}\setminus E$ projects bi-holomorphically onto $U\setminus\mathcal{C}$.

On $\tilde{U}_1=\{l_1\neq 0\}$, we have that $z_2=\frac{l_2}{l_2}z_1$ and local coordinates on $\tilde{U}_1$ are given by
\[
\zeta_1^{(1)}=z_1, \quad \zeta_2^{(1)}=\frac{l_2}{l_1},\quad \zeta_3^{(1)}=z_3,
\]
whereas on $\tilde{U}_2=\{l_2\neq 0\}$, we have that $z_1=\frac{l_1}{l_2}z_2$ and the following
\[
\zeta_1^{(2)}=\frac{l_1}{l_2},\quad \zeta_2^{(2)}=z_2,\quad \zeta_3^{(2)}=z_3
\]
are local coordinates on $\tilde{U}_2.$
In the following, we will show the procedure only on $\tilde{U}_1$ since on $\tilde{U}_2$ the approach is analogous.

Notice that $\psi$ induces a morphism $\tilde{\psi}$ on $\mathit{Bl}_{\mathcal{C}}M$. In particular, we have that, on $\tilde{U}_1$,
\[
\tilde{\psi}(\zeta_1^{(1)},\zeta_2^{(1)},\zeta_3^{(1)})=(-\zeta_1^{(1)},\zeta_2^{(1)},\zeta_3^{(1)}).
\]
Let us then consider the quotient $M'=\mathit{Bl}_{\mathcal{C}}M/\langle\tilde{\psi}\rangle$. On the quotient $\tilde{U}_1/\langle\tilde{\psi}\rangle\subset M'$,
the action $\sigma'$ induced by $\sigma$ acts as
\begin{equation}\label{eq:sigma_prime}
\sigma'([\zeta_1^{(1)},\zeta_2^{(1)},\zeta_3^{(1)}]_{\tilde{\psi}})=[i\zeta_1^{(1)},\zeta_2^{(1)},-\zeta_3^{(1)}]_{\tilde{\psi}}.\end{equation}
Note that, through local coordinates, $\tilde{U}_1/\langle\tilde{\psi}\rangle$
is identified with with $\C^3/\langle\tilde{\psi}\rangle$. So we construct local coordinates for the latter in the following way. The holomorphic map $f\colon\C^{3}\rightarrow \C^3$ defined by
\begin{gather*}
f(w_1,w_2,w_3)=(w_1^2,w_2,w_3), \quad \text{for}\,\, (w_1,w_2,w_3)\in\C^3,
\end{gather*}
which on local coordinates on $\tilde{U}_1$ acts as
\[
f(\zeta_1^{(1)},\zeta_2^{(1)},\zeta_3^{(1)})=((\zeta_1^{(1)})^2,\zeta_2^{(1)},\zeta_3^{(1)}),
\]
gives rise to the following diagram
\begin{equation*}
\begin{tikzcd}
\C^3_{(\zeta_1^{(1)},\zeta_2^{(1))},\zeta_3^{(1)})}\arrow[r, "f"]\arrow{d} & \C^3_{(w_1,w_2,w_3)} \\
\C^3/\langle\tilde{\psi}\rangle\arrow[ur, "\hat{f}",swap]
\end{tikzcd}
\end{equation*}
where $\hat{f}([\zeta^{(1)}]_{\tilde{\psi}}):=f(\zeta^{(1)})$ is well defined and, in fact, a biholomorphism.


Therefore, we can identify $\tilde{U}_1/\langle\tilde{\psi}\rangle$ with $\C^3_{(w_1,w_2,w_3)}$.
We now look for fixed point of $\sigma'$ on $M'$. Locally, we must then consider the action of $\sigma'$, which on $\C^3_{(w_1,w_2,w_3)}$ acts as $\tilde{\sigma}:=\hat{f}^{-1}\circ\sigma'\circ \hat{f}$. Recalling equation (\ref{eq:sigma_prime}), we see that
\begin{equation}\label{eq:tilde_sigma}
\tilde{\sigma}(w_1,w_2,w_3)=(-w_1,w_2,-w_3)
\end{equation}
for any $(w_1,w_2,w_3)\in\C^3$, yielding that the locus of fixed points of $\sigma'$ on $\tilde{U}_1/\langle\tilde{\psi}\rangle$ is given, locally, by the set
\[
\mathcal{D}=\{w_1=0,w_2,w_3=0\}.
\]
We now perform the further blowup $p'\colon \mathit{Bl}_{\mathcal{D}}M'\rightarrow M'$, by considering
\[
\tilde{\tilde{U}}^{(1)}=\{((w_1,w_2,w_3),[v_1:v_3]):w_1v_3-w_3v_1=0\}.
\] 
On $\tilde{\tilde{U}}_1^{(1)}:=\{v_1\neq 0\}$, we have that $w_3=\frac{v_3}{v_1}w_1$ and local coordinates are given by
\begin{equation}\label{eq:2_blow_up_coord_1}
\eta_1^{(1)}=w_1,\quad \eta_2^{(1)}=w_2,\quad \eta_3^{(1)}=\frac{v_3}{v_1},
\end{equation}
whereas on $\tilde{\tilde{U}}_3^{(1)}:=\{v_3\neq 0\}$, we have that $w_1=\frac{v_1}{v_3}w_3$ and the following
\begin{equation}\label{eq:2_blow_up_coord_2}
\eta_1^{(3)}=\frac{v_1}{v_3}, \quad \eta_2^{(3)}=w_2,\quad \eta_3^{(3)}=w_3,
\end{equation}
are local coordinates on $\tilde{\tilde{U}}_3^{(1)}$.

We now study the quotient $\mathit{Bl}_{\mathcal{D}}M'$ by the induced action of $\langle\tilde{\sigma}'\rangle$.
By recalling the local action of $\tilde{\sigma}$ (\ref{eq:tilde_sigma}) and the expressions (\ref{eq:2_blow_up_coord_1}) and (\ref{eq:2_blow_up_coord_2}) for local coordinates, on $\tilde{\tilde{U}}_1^{(1)}$ , we have that
\[
\tilde{\sigma}'(\eta_1^{(1)},\eta_2^{(1)},\eta_3^{(1)})=(-\eta_1^{(1)},\eta_2^{(1)},\eta_3^{(1)}),
\]
whereas on $\tilde{\tilde{U}}_3^{(1)}$, we have that
\[
\tilde{\sigma}'(\eta_1^{(3)},\eta_2^{(3)},\eta_3^{(3)})=(\eta_1^{(3)},\eta_2^{(3)},-\eta_3^{(3)}),
\]
i.e.,
\[
\tilde{\tilde{U}}_1^{(1)}/\langle\tilde{\sigma}'\rangle\simeq \frac{\C}{\pm\id}\times \C^2_{(\eta_2^{(1)},\eta_3^{(1)})}
\]
and
\[
\tilde{\tilde{U}}_3^{(1)}/\langle\tilde{\sigma}'\rangle\simeq \frac{\C}{\pm\id}\times \C^2_{(\eta_1^{(3)},\eta_2^{(3)})}.
\]
Hence, since $\tilde{\tilde{U}}_1^{(1)}/\langle\tilde{\sigma}'\rangle$ and $\tilde{\tilde{U}}_3^{(1)}/\langle\tilde{\sigma}'\rangle$ are smooth manifolds, the manifold $\mathit{Bl}_{\mathcal{D}}M'/\langle\tilde{\sigma}'\rangle$ is smooth.

As mentioned before, the same procedure can be applied starting from $\tilde{U}_2$, which results in finding
smooth resolutions of the singular points in the chart $\tilde{U}_2\subset\mathit{Bl}_{\mathcal{C}}M$.
\newline
Therefore, if we denote by $\tilde{M}_1$ the resulting complex manifold and the projection  $p''\colon \tilde{M}_1\rightarrow M/\langle\sigma\rangle$, we obtain a smooth resolution of the fixed curve $\mathcal{C}=\mathcal{C}_1$ on $M/\langle\sigma\rangle$.

By repeating the analogous procedure for every fixed locus $\mathcal{C}_i$, we obtain a smooth resolution
\[
\pi\colon \hat{M}\rightarrow \tilde{M},
\]
as the diagram summarizes
\[
\begin{tikzcd}
M \arrow{d} & \arrow{l} \mathit{Bl}_{Fix_\psi}M\arrow{d} &\\
M/\langle\psi\rangle \arrow{d} & M':=\mathit{Bl}_{Fix_{\psi}}M/\langle\tilde{\psi}\rangle \arrow{l} \arrow{d} & \mathit{Bl}_{Fix_{\tilde{\sigma}}}M' \arrow{l}\arrow{d}\\
\hat{M}:=M/\langle\sigma\rangle & M'/\langle\sigma'\rangle \arrow{l} & \mathit{Bl}_{Fix_{\tilde{\sigma}}}M'/\langle\tilde{\sigma}'\rangle\arrow{l}.
\end{tikzcd}
\]

(II) We now show the following:\vskip.2truecm\noindent
$(i)$ $\tilde{M}$ admits a non-vanishing $ABC$-Massey triple
product; \vskip.1truecm\noindent
$(ii)$ $\tilde{M}$ satisfies the $\del\delbar$-lemma.
\vskip.3truecm\noindent
$(i)$ We proceed by considering the pull-back through $\pi$ of the Bott-Chern cohomology classes used in Lemma \ref{lem:ABC_prod_orb}, i.e., we consider the classes $[\pi^*\phi^{1\overline{1}}]\in H_{BC}^{1,1}(\tilde{M})$ and $[\pi^*\phi^{2\overline{2}}]\in H_{BC}^{1,1}(\tilde{M})$. They are well-defined and non-vanishing, by Theorem \ref{thm:pullback_orb}. Since $\pi^*(\phi^{1\overline{1}})\wedge \pi^*(\phi^{2\overline{2}})=\del\delbar (\pi^*\phi^{3\overline{3}})$, the $ABC$-Massey product
\[
\langle[\pi^*\phi^{1\overline{1}})],[\pi^*\phi^{2\overline{2}}],[\pi^*\phi^{2\overline{2}}] \rangle_{ABC}\in\frac{H_A^{2,2}(\tilde{M})}{[\pi^*\phi^{1\overline{1}}]_{BC}\cup H_A^{1,1}(\tilde{M})+[\pi^*\phi^{2\overline{2}}]_{BC}\cup H_A^{1,1}(\tilde{M})}
\]
is well-defined and represented by $[\pi^*\phi^{23\overline{23}}]\in H_A^{2,2}(\tilde{M})$. Again, by Theorem \ref{thm:pullback_orb}, this class is not vanishing.

It remains to show that 
$$[\pi^*\phi^{23\overline{23}}]_A\notin [\pi^*\phi^{1\overline{1}}]_{BC}\cup H_A^{1,1}(\tilde{M})+[\pi^*\phi^{2\overline{2}}]_{BC}\cup H_A^{1,1}(\tilde{M}).$$ 
By contradiction, let us suppose the converse, i.e.,
\begin{equation}\label{eq:ABC_resolution}
[\pi^*\phi^{23\overline{23}}]_A=[\pi^*\phi^{1\overline{1}}]_{BC}\cup [F]_A+[\pi^*\phi^{2\overline{2}}]_{BC}\cup [G]_A,
\end{equation}
for some $[F],[G]\in H_A^{1,1}(\tilde{M})$. Let us now multiply by $[\pi^*\phi^{1\overline{1}}]_{BC}$ each side of (\ref{eq:ABC_resolution}), to obtain
\begin{align*}
[\pi^*\phi^{123\overline{123}}]_A&=[\pi^*\phi^{1\overline{1}}\wedge\pi^*\phi^{2\overline{2}}]_{BC}\cup[G]_A\\
&=[\pi^*(\del\delbar\phi^{3\overline{3}})]_{BC}\cup[G]_A\\
&=[\del\delbar(\pi^*\phi^{3\overline{3}})]_{BC}\cup[G]_A\\
&=[\del\delbar(\pi^*\phi^{3\overline{3}}\wedge G)]_A=0\in H_{A}^{2,2}(\tilde{M}),
\end{align*}
which leads to contradiction, since $\pi^*$ is injective by Theorem \ref{thm:pullback_orb} and $[\phi^{123\overline{123}}]_A\neq0.$
\vskip.2truecm\noindent
$(ii)$ We now observe that the fixed points loci along which we perform the blowups are complex lines, which are naturally K\"ahler. Therefore, they satisfy the $\del\delbar$-lemma. As proved in Lemma \ref{lem:deldelbar_orb}, also $\hat{M}$ satisfies the $\del\delbar$-lemma. We can then apply \cite[Theorem 25]{ASTT}, to conclude that the resolution $\tilde{M}$ of $\hat{M}$ satisfies the $\del\delbar$-lemma.
\end{proof}

\begin{equation}\label{tab:BC_cohom}
\begin{array}{ll}
\toprule
(p,q) & H_{BC}^{p,q}(M,J_t), \quad t\in\Delta\setminus0 \\
\bottomrule
(0,0) & \C\langle1\rangle \\
\midrule
(1,0) &  \C\langle \eta_t^1 \rangle \\
(0,1) & \C\langle \eta_t^{ \overline{1}}\rangle \\
\midrule
(2,0) & \C\langle \eta_t^{23}\rangle\\
(1,1) & \C\langle \eta_t^{1\overline{1}}, e^{z_1-\overline{z}_1} \eta_t^{2\overline{3}}, e^{\overline{z}_1-z_1}\eta_t^{3\overline{2}}\rangle\\
(0,2) & \C\langle \eta_t^{\overline{23}}\rangle\\
\midrule
(3,0) & \C\langle \eta_t^{123}\rangle\\
(2,1) & \C\langle e^{z_1-\overline{z}_1}\eta_t^{12\overline{3}}, e^{\overline{z}_1-z_1}\eta_t^{13\overline{2}},\eta_t^{23\overline{1}} \rangle\\
(1,2) & \C\langle e^{\overline{z}_1-z_1}\eta_t^{3\overline{12}}, e^{z_1-\overline{z}_1}\eta_t^{2\overline{13}}, \eta_t^{1\overline{23}}\rangle\\
(0,3) & \C\langle \eta_t^{\overline{123}}\rangle\\
\midrule
(3,1) & \C\langle \eta_t^{123\overline{1}}\rangle\\
(2,2) & \C\langle e^{z_1-\overline{z}_1}\eta_t^{12\overline{12}}, e^{\overline{z}_1-z_1}\eta_t^{13\overline{12}}, \eta_t^{23\overline{23}}\rangle\\
(1,3) & \C\langle \eta_t^{1\overline{123}}\rangle\\
\midrule
(3,2) & \C\langle \eta_t^{123\overline{23}}\rangle\\
(2,3) & \C\langle\eta_t^{23\overline{123}}\rangle\\
\midrule
(3,3) & \C\langle\eta_t^{123\overline{123}}\rangle\\
\bottomrule
\end{array}
\end{equation}
\begin{equation}\label{tab:BC_cohom_nak}
\begin{array}{ll}
\toprule
(p,q) & H_{BC}^{p,q}(M,J) \\
\bottomrule
(0,0) & \C\langle1\rangle \\
\midrule
(1,0) &  \C\langle \eta^1 \rangle \\
(0,1) & \C\langle \eta^{\overline{1}}\rangle \\
\midrule
(2,0) & \C\langle \eta^{12},\eta^{13},\eta^{23}\rangle\\
(1,1) & \C\langle \eta^{1\overline{1}}, e^{\overline{z}_1-z_1} \eta^{1\overline{2}}, e^{z_1-\overline{z}_1}\eta^{1\overline{3}}, e^{z_1-\overline{z}_1}\eta^{2\overline{1}}, \eta^{z_1-\overline{z}_1} \eta^{2\overline{3}}, e^{\overline{z}_1-z_1}\eta^{3\overline{1}}, e^{\overline{z}_1-z_1}\eta^{3\overline{2}}\rangle\\
(0,2) & \C\langle \eta^{\overline{12}},\eta^{\overline{13}}, \eta^{\overline{23}}\rangle\\
\midrule
(3,0) & \C\langle \eta^{123}\rangle\\
(2,1) & \C\langle \eta^{12\overline{1}}, e^{z_1-\overline{z}_1}\eta^{12\overline{1}}, e^{\overline{z}_1-z_1}\eta^{12\overline{2}}, e^{z_1-\overline{z}_1}\eta^{12\overline{3}}, \eta^{13\overline{1}}, e^{\overline{z}_1-z_1}\eta^{13\overline{1}}, e^{\overline{z}_1-z_1}\eta^{13\overline{2}}, e^{z_1-\overline{z}_1}\eta^{13\overline{3}},\eta^{23\overline{1}} \rangle\\
(1,2) & \C\langle \eta^{1\overline{12}}, e^{\overline{z}_1-z_1}\eta^{1\overline{12}}, \eta^{1\overline{13}}, e^{z_1-\overline{z}_1}\eta^{1\overline{13}}, \eta^{1\overline{23}}, e^{z_1-\overline{z}_1}\eta^{2\overline{12}}, e^{z_1-\overline{z}_1}\eta^{2\overline{13}}, e^{\overline{z}_1-z_1}\eta^{3\overline{12}}, e^{\overline{z}_1-z_1}\eta^{3\overline{13}}\rangle\\
(0,3) & \C\langle \eta^{\overline{123}}\rangle\\
\midrule
(3,1) & \C\langle \eta^{123\overline{1}}, e^{\overline{z}_1-z_1}\eta^{123\overline{2}}, e^{z_1-\overline{z}_1}\eta^{123\overline{3}}\rangle\\
(2,2) & \C\langle e^{z_1-\overline{z}_1}\eta^{12\overline{12}}, e^{\overline{z}_1-z_1}\eta^{13\overline{12}}, \eta^{23\overline{23}}\rangle\\
(1,3) & \C\langle \eta^{1\overline{123}}, e^{z_1-\overline{z}_1}\eta^{2\overline{123}}, e^{\overline{z}_1-z_1}\eta^{3\overline{123}}\rangle\\
\midrule
(3,2) & \C\langle \eta^{123\overline{12}}, e^{\overline{z}_1-z_1}\eta^{123\overline{12}}, \eta^{123\overline{13}}, e^{z_1-\overline{z}_1} \eta^{123\overline{13}},\eta^{123\overline{23}}\rangle\\
(2,3) & \C\langle \eta^{12\overline{123}},e^{z_1-\overline{z}_1}\eta^{12\overline{123}},\eta^{13\overline{123}},e^{\overline{z}_1-z_1}\eta^{13\overline{123}}, \eta^{23\overline{123}}\rangle\\
\midrule
(3,3) & \C\langle\eta^{123\overline{123}}\rangle\\
\bottomrule
\end{array}
\end{equation}

\end{document}